\numberwithin{equation}{section}
\theoremstyle{plain}
\newtheorem{lem}{Lemma}[section]
\newtheorem{thm}[lem]{Theorem}
\newtheorem{prp}[lem]{Proposition}
\newtheorem{cly}[lem]{Corollary}
\theoremstyle{remark}
\newtheorem{rem}{Remark}
\def\Eb{\mathbb{E}}
\def\Nb{\mathbb{N}}
\def\Pb{\mathbb{P}}
\def\Rb{\mathbb{R}}
\def\Tb{\mathbb{T}}
\def\Zb{\mathbb{Z}}
\def\Ac{\mathcal{A}}
\def\Bc{\mathcal{B}}
\def\Cc{\mathcal{C}}
\def\Ec{\mathcal{E}}
\def\Fc{\mathcal{F}}
\def\Ic{\mathcal{I}}
\def\Mc{\mathcal{M}}
\def\Pc{\mathcal{P}}
\def\Qc{\mathcal{Q}}
\def\Rc{\mathcal{R}}
\def\Zc{\mathcal{Z}}
\def\Et{\mathbf{E}}
\def\Pt{\mathbf{P}}
\def\Bs{\mathscr{B}}
\def\Vs{\mathscr{V}}
\def\Ws{\mathscr{W}}
\def\Wh{\Cc_{loc}^{\alpha'}(\Omega)}
\def\Pcf{\Pc^{\text{fin}}}
\def\d{\mathrm{d}}
\def\sva{a}
\def\ind{\mathbf{1}}
\def\ed{\stackrel{\text{\tiny{d}}}{=}}
\def\Var{\mathrm{Var}}
\newcommand\red{}
\begin{document}

\begin{frontmatter}

\title{The two-dimensional continuum random field Ising model}
\runtitle{The two-dimensional continuum random field Ising model}

\begin{aug}
\author[A]{\fnms{Adam} \snm{Bowditch}\ead[label=e1,mark]{a.m.bowditch@gmail.com}}
\and
\author[A]{\fnms{Rongfeng} \snm{Sun}\ead[label=e2,mark]{matsr@nus.edu.sg}}

\address[A]{Department of Mathematics,
		National University of Singapore,
 \printead{e1,e2}}
\end{aug}

\begin{abstract}
In this paper we construct the two-dimensional continuum random field Ising model via scaling limits of a random field perturbation of the critical two-dimensional Ising model with diminishing disorder strength. Furthermore, we show that almost surely with respect to the continuum random field given by a white noise, the law of the magnetisation field is singular with respect to that of the two-dimensional continuum pure Ising model constructed by Camia, Garban and Newman in \cite{cagane15}.
\end{abstract}

\begin{keyword}[class=MSC2020]
\kwd[Primary ]{82B44}
\kwd[; secondary ]{82B20}
\kwd{82B27}
\kwd{60G60}
\kwd{60K35}
\end{keyword}

\begin{keyword}
\kwd{Random Field Ising Model}
\kwd{Continuum Scaling Limit}
\kwd{Magnetisation Field}
\end{keyword}

\end{frontmatter}
\setcounter{tocdepth}{3}
\tableofcontents

\section{Introduction}
\paragraph*{}

Since its introduction by Lenz \cite{le20} as a model for ferromagnetism, the Ising model has become one of the most fundamental models in statistical mechanics, maintaining an important role in the theory of critical phenomena since Peierls \cite{pe36} proved that it undergoes a phase transition in dimensions two and above. It is natural to consider disorder perturbations of the model by i.i.d.\ random external fields, known as the {\em random field Ising model (RFIM)}, and ask whether the critical behaviour changes or not.
Imry and Ma \cite{imma75} gave a physical argument which suggested that in low dimension, the phase transition is \emph{rounded off} under the influence of arbitrarily weak random field disorder.
This was confirmed in dimension $d=2$ by Aizenman and Wehr \cite{aiwe90} who showed the absence of a first order phase transition at any temperature for any non-zero disorder strength. For dimension $d\geq 3$, the question was settled by Bricmont and Kupiainen \cite{brku88} who showed that the first order phase transition persists at low temperatures.

In this paper we consider the Ising model in dimension $d=2$ with the aim of further understanding the issue of disorder relevance; that is, how the addition of arbitrarily weak disorder changes the nature of the phase transition of the underlying pure model.
We show that disorder relevance manifests itself via the convergence of the disordered model to a disordered continuum limit when the disorder strength and lattice mesh are suitably rescaled. In the absence of disorder, such a continuum limit for the critical two-dimensional Ising magnetisation field has been constructed by Camia, Garban and Newman in \cite{cagane15}. We prove that, for almost every instance of disorder, the pure and disordered continuum limits are singular.

In the particular case of Gaussian disorder, the decay rate of the spin correlations for the RFIM has been of much recent interest.
It has been shown in \cite{ch18} that, at any temperature and any disorder strength, the correlations between spins of distance $N$ are at most $1/ \log \log N$. The decay rate has then been improved to polynomial order in \cite{aipe19}, and then further improved to exponential decay in \cite{dixi19, dixi19a}  (see also \cite{aihape19}), which resolves a long-standing conjecture.

\medskip

Let us first recall the basic ingredients before stating our results.

\paragraph*{The pure Ising model} Let $\Omega\subset \Rb^2$ be a simply connected bounded open domain with a \red{piecewise $C^1$ boundary}. For $\sva>0$, define $\Omega_\sva:=\Omega \cap \sva \Zb^2$ and write $x \sim y$ if $x,y \in \Omega_\sva$ are neighbouring vertices. Denote by $\partial \Omega_\sva:=\{y\in \sva \Zb^2 \backslash \Omega_\sva: y\sim x \mbox{ for some } x\in \Omega_\sva\}$ the external boundary of $\Omega_\sva$. Given boundary condition $\xi\in \{\pm1\}^{\partial \Omega_\sva}$, we then define the \emph{pure Ising model} as the law over spins $\sigma\in\{\pm 1\}^{\Omega_\sva\cup \partial \Omega_\sva}$ with $\sigma|_{\partial \Omega_\sva}=\xi|_{\partial \Omega_\sva}$ by
\begin{equation}
\Pt_{\Omega}^{\sva, \xi}(\sigma) = \frac{1}{Z_{\Omega}^{\sva, \xi}}\exp\left(\beta\sum\limits_{\stackrel{x\sim y}{x \in \Omega_\sva, y\in \Omega_\sva \cup \partial \Omega_\sva}}\sigma_x\sigma_y\right)
\end{equation}
where the sum is over unordered pairs $x\sim y$ and $Z_{\Omega}^{\sva, \xi}$ 
is the \emph{partition function}. When $\xi\equiv+1$, it is known as the $+$ boundary condition and we simply write $+$ in place of $\xi$.
It is well known that there is a critical inverse temperature $\beta_c=\log(1+\sqrt{2})/2$ such that the boundary effect is negligible in the infinite volume limit for $\beta<\beta_c$, leading to a unique infinite volume Gibbs state; and non-negligible for $\beta>\beta_c$, leading to multiple infinite volume Gibbs states, see \cite[Chapter 3]{frve17} for a more detailed introduction. Henceforth, we set $\beta=\beta_c$ and let $\Pt_\Omega^{\sva}$ denote the \emph{two dimensional critical Ising model with $+$ boundary condition}, and we denote by $\Et_\Omega^{\sva}$ expectation with respect to this law. We will assume $+$ boundary condition throughout the rest of the paper and omit $+$ from the superscripts.

\paragraph*{The random field Ising model}
The random field Ising model is a disorder perturbation of the Ising model by introducing i.i.d.\ random external field for each spin.
Denote by $\Pb$ a law over a family $\omega=(\omega_x)_{x\in\Zb^2}$ of i.i.d.\ centred random variables with unit variance and finite exponential moments. Write $\Eb$ for expectation with respect to this law. Given $\lambda, h:\Omega\rightarrow \Rb$, for each $\sva>0$ and $x \in \Omega_\sva$, write
\begin{equation}\label{lambdaxa}
\lambda_x^\sva :=\sva^{7/8}\lambda(x),  \qquad h_x^\sva :=\sva^{15/8}h(x),  \qquad \omega^\sva_x:=\omega_{x/\sva}.
\end{equation}
For $\omega$ fixed, we define the \emph{two dimensional critical random field Ising model} (with $+$ boundary condition) as the law
\begin{equation}\label{Gibbs}
\Pt_{\Omega; \lambda,h}^{\omega,\sva}(\sigma)
:= \frac{1}{Z_{\Omega;\lambda, h}^{\omega,\sva}}\exp\left(\sum_{x\in\Omega_\sva}(\lambda_x^\sva\omega^\sva_x+h_x^\sva)\sigma_x\right)\Pt_{\Omega}^{\sva}(\sigma)
\end{equation}
where \red{$\Pt_{\Omega}^{\sva}$ is the pure Ising model at critical inverse temperature $\beta_c$ and}
\begin{equation}\label{Z}
Z_{\Omega;\lambda, h}^{\omega,\sva}
= \Et_{\Omega}^\sva\left [\exp\left (\sum_{x\in\Omega_\sva}(\lambda_x^\sva\omega^\sva_x+h_x^\sva)\sigma_x\right )\right ]
\end{equation}
is the random partition function depending on the random field $\omega$.

In \cite[Theorem 3.14]{casuzy17}, it is shown that the rescaled partition function
\begin{equation}\label{tildeZ}
\widetilde{Z}_{\Omega;\lambda, h}^{\omega,\sva}:=\theta_\sva Z_{\Omega;\lambda, h}^{\omega,\sva}, \qquad \mbox{where} \quad \theta_\sva:=e^{-\frac{1}{2}\sva^{-1/4}\Vert \lambda\Vert _{L^2}^2},
\end{equation}
converges in $\Pb$-distribution to a non-trivial limit $\Zc_{\Omega;\lambda, h}^W$, which admits a Wiener chaos expansion with respect to a spatial white noise $W$. This is the first step toward the construction of the two-dimensional continuum random field Ising model and the starting point of our paper, which will be explained further in Section \ref{s:Pre}.

\paragraph*{Magnetisation field}
We will study the convergence of the RFIM through its magnetisation field.
For $\sva>0$ and $x\in\Omega_\sva$, let $S_\sva(x):=\{y\in\Omega:\Vert x-y\Vert _{\infty}<\sva/2\}$ be the box centred at $x$ with side length $\sva$. We then define the \red{\emph{rescaled piecewise constant magnetisation field}} as the distribution
\begin{equation}\label{Phiomega}
\Phi_\Omega^\sva := \sva^{-1/8}\sum_{x\in\Omega_\sva}\sigma_x\ind_{S_\sva(x)}.
\end{equation}
Denote by $\mu_\Omega^{\sva}:=\Pt_\Omega^\sva\circ(\Phi_\Omega^\sva)^{-1}$ the law of $\Phi_\Omega^\sva$ without disorder. It has been shown by Camia, Garban and Newman in \cite{cagane15} that, as $\sva\rightarrow 0$, $\mu^\sva_\Omega$ converges weakly to a limiting probability measure $\mu_\Omega$, which can be regarded as the law of the magnetisation field $\Phi_\Omega$ for the continuum two-dimensional critical Ising model. The magnetisation field $\Phi_\omega^\sva$ was regarded as an element of the Sobolev space ${\cal H}^{-3}$, which was subsequently improved to the optimal Besov-H\"older space $\Cc^\alpha_{loc}(\Omega)$ for $\alpha<-1/8$ by Furlan and Mourrat \cite{fumo17}.

Similarly, for each fixed realisation of the random field $\omega$, define
\begin{equation}\label{mu}
\mu^{\omega,\sva}_{\Omega;\lambda,h}:=\Pt_{\Omega;\lambda,h}^{\omega,\sva}\circ(\Phi_\Omega^\sva)^{-1}
\end{equation}
to be the quenched law of the magnetisation field with disorder $\omega$. The main focus of this article is to show that $\mu^{\omega,\sva}_{\Omega;\lambda,h}$ converges weakly in $\Pb$-distribution to a disordered continuum limit $\mu^{W}_{\Omega;\lambda,h}$ (with $+$ boundary conditions) where the disorder is given by a white noise $W$ that arises as
the limit of the random process
\begin{equation} \label{Womega}
W^{\omega,\sva}:=\red{\sva^{-1} \sum_{x\in\Omega_\sva}\omega_x^\sva \ind_{S_\sva(x)}}.
\end{equation}

\paragraph*{Statement}
We will regard the magnetisation field $\Phi_\Omega^\sva$ as an element of the Besov-H\"older space $\Cc^{\alpha}_{loc}(\Omega)$ for $\alpha<-1/8$ defined in \cite{fumo17} (see Section \ref{s:Pre} for more detail). Let $\Mc_1(\Cc^{\alpha}_{loc}(\Omega))$ denote the space of probability measures on $\Cc^{\alpha}_{loc}(\Omega)$ equipped with the topology of weak convergence, so that for every $\omega$, $\mu^{\omega,\sva}_{\Omega;\lambda,h}\in \Mc_1(\Cc^{\alpha}_{loc}(\Omega))$. Denote by $C^1(\Omega)$ the space of bounded, continuously differentiable functions with bounded first derivatives.

Our first result shows that the disordered continuum limit $\mu^{W}_{\Omega;\lambda,h}$ exists and, for almost every realisation of the white noise $W$,  is a probability measure on $\Cc^{\alpha}_{loc}(\Omega)$ and can be interpreted as the law of the continuum RFIM magnetisation field with external field $W$.

\begin{thm}\label{t:JntCnv}
Let $\lambda,h\in C^1(\Omega)$ with $\lambda_{min}:=\inf_{x\in \overline{\Omega}}\lambda(x)>0$ and let $\alpha<-1/8$, $\alpha'<-1$.
As $\sva\rightarrow 0$,
\begin{equation*}
\left(W^{\omega,\sva},\widetilde{Z}^{\omega,\sva}_{\Omega;\lambda,h},\mu^{\omega,\sva}_{\Omega;\lambda,h}\right)\Rightarrow \left(W,\Zc^{W}_{\Omega;\lambda,h},\mu^{W}_{\Omega;\lambda,h}\right)
\end{equation*}
weakly as random variables in $\Wh\times\Rb\times\Mc_1(\Cc^{\alpha}_{loc}(\Omega))$, where $W$ is white noise, and $\Pb$-a.s., $\Zc^{W}_{\Omega;\lambda,h}$ and $\mu^{W}_{\Omega;\lambda,h}$ are determined uniquely by $W$.
\end{thm}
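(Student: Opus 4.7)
The plan is to reduce everything to a generating-functional identity obtained via a change of measure and then extend the partition-function convergence of \cite{casuzy17} to include observables of the magnetisation field. For any bounded continuous $F:\Cc^{\alpha}_{loc}(\Omega)\to \Rb$, the key identity reads
\begin{equation*}
\int F\, \d \mu^{\omega,\sva}_{\Omega;\lambda,h}
= \frac{\theta_\sva\, \Et_\Omega^{\sva}\!\left[F(\Phi_\Omega^\sva)\exp\!\Big(\sum_{x\in\Omega_\sva}(\lambda_x^\sva\omega_x^\sva+h_x^\sva)\sigma_x\Big)\right]}{\widetilde Z^{\omega,\sva}_{\Omega;\lambda,h}}.
\end{equation*}
The denominator converges jointly with $W^{\omega,\sva}$ to $(\Zc^{W}_{\Omega;\lambda,h},W)$ by \cite{casuzy17} combined with the tautological convergence $W^{\omega,\sva}\Rightarrow W$ in $\Wh$ (a standard Donsker-type CLT in negative H\"older spaces using the finite exponential moments of $\omega$). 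The independence of the pure Ising measure and $\omega$ ensures that the Ising field $\Phi^\sva_\Omega$ is asymptotically independent of $W$, so the joint law of $(W^{\omega,\sva},\Phi^\sva_\Omega)$ converges to $(W,\Phi_\Omega)$ with $W\perp \Phi_\Omega$.

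My main task is to upgrade \cite[Theorem 3.14]{casuzy17} to a convergence of the numerator jointly with $W^{\omega,\sva}$. I would expand the exponential in a Wiener chaos with respect to $\omega$, producing
\begin{equation*}
\theta_\sva \sum_{n,m\geq 0}\frac{1}{n!m!}\sum_{x_1,\dots,x_n}\sum_{y_1,\dots,y_m} \lambda_{x_1}^\sva\cdots\lambda_{x_n}^\sva h_{y_1}^\sva\cdots h_{y_m}^\sva\, \omega_{x_1}^\sva\cdots \omega_{x_n}^\sva\, \Et_\Omega^\sva\!\left[F(\Phi_\Omega^\sva)\sigma_{x_1}\cdots \sigma_{x_n}\sigma_{y_1}\cdots \sigma_{y_m}\right].
\end{equation*}
After reorganising the sums into Wiener chaoses of $W^{\omega,\sva}$ (isolating coinciding indices, whose divergent contributions are precisely what $\theta_\sva$ cancels, as in \cite{casuzy17}), each term becomes a product of discretised white-noise integrals times a mixed Ising correlation function $\Et_\Omega^\sva[F(\Phi_\Omega^\sva)\prod \sigma_{z_i}]$. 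For $F\equiv 1$ the convergence of these correlations, together with the a priori $L^2$-bounds on each chaos coefficient, drives \cite{casuzy17}; for general $F$ the same mechanism applies provided we can show that
\begin{equation*}
\Et_\Omega^\sva\!\left[F(\Phi_\Omega^\sva)\,\sigma_{z_1}\cdots \sigma_{z_k}\right]/\sva^{k/8}\ \longrightarrow\ \Et[F(\Phi_\Omega)\Psi(z_1,\dots,z_k)]
\end{equation*}
uniformly on compacta away from diagonals, where $\Psi$ is the continuum $k$-spin field. For $F$ a continuous functional of the Furlan--Mourrat coupling of $\Phi^\sva_\Omega$ this follows from joint convergence of the magnetisation field and all $k$-point spin fields in \cite{cagane15, fumo17} via a uniform integrability argument based on the known moment bounds on $\Phi^\sva_\Omega$.

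Granted the convergence of the numerator, the ratio yields $\int F\, \d\mu^{\omega,\sva}_{\Omega;\lambda,h} \Rightarrow \int F\, \d\mu^W_{\Omega;\lambda,h}$ jointly with $(W^{\omega,\sva},\widetilde Z^{\omega,\sva}_{\Omega;\lambda,h})$, and the limit is $W$-measurable because the whole chaos expansion is a measurable function of $W$. The positivity lower bound $\lambda_{\min}>0$ and \cite{casuzy17} guarantee $\Zc^W_{\Omega;\lambda,h}>0$ a.s., so the ratio is well defined. To upgrade convergence of individual observables to weak convergence in $\Mc_1(\Cc^{\alpha}_{loc}(\Omega))$, I would prove tightness of $\mu^{\omega,\sva}_{\Omega;\lambda,h}$ as a random measure. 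Applying Cauchy--Schwarz under $\Pt_\Omega^\sva$ to the change-of-measure identity gives, for any test function $\varphi$ and $p\geq 1$,
\begin{equation*}
\Et^{\omega,\sva}_{\Omega;\lambda,h}\bigl[|\langle \Phi_\Omega^\sva,\varphi\rangle|^p\bigr] \leq \frac{\theta_\sva^{1/2}}{\widetilde Z^{\omega,\sva}_{\Omega;\lambda,h}}\bigl(\Et_\Omega^\sva[|\langle \Phi_\Omega^\sva,\varphi\rangle|^{2p}]\bigr)^{1/2}\bigl(\theta_\sva\, \Et_\Omega^\sva[e^{2\sum(\lambda_x^\sva\omega_x^\sva+h_x^\sva)\sigma_x}]\bigr)^{1/2},
\end{equation*}
and the second factor is $\Pb$-tight by \cite{casuzy17} applied with parameters $(2\lambda,2h)$, while the first is controlled by the uniform Besov moment bounds of \cite{fumo17}. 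This yields uniform control of $\mu^{\omega,\sva}$-moments, which combined with standard Besov--H\"older Kolmogorov criteria gives the required tightness.

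The main obstacle will be the joint chaos-level control in Step 2, in particular justifying that $F(\Phi^\sva_\Omega)$ can be inserted into the coupled Ising--disorder chaos expansion of \cite{casuzy17} without breaking the delicate cancellations that produce $\theta_\sva$. This requires a simultaneous coupling of the discrete magnetisation field and all finite-dimensional spin correlation functions, and an $L^2$ control of the chaos coefficients that is uniform in $\sva$ with $F$-dependent constants; the rest of the argument is a standard combination of the Skorokhod representation with uniform moment estimates.
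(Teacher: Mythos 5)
Your overall architecture (write $\int F\,\d\mu^{\omega,\sva}_{\Omega;\lambda,h}$ as a ratio, prove joint convergence of numerator, denominator and $W^{\omega,\sva}$, then add tightness) matches the paper's, and your tightness step via Cauchy--Schwarz and the exponential moment bounds is essentially the argument in Section \ref{s:Tight}. The decisive difference, and where your proposal has a genuine gap, is the treatment of the numerator for \emph{general} bounded continuous $F$. Your chaos expansion produces coefficients of the form $\sva^{-k/8}\,\Et_\Omega^\sva\big[F(\Phi^\sva_\Omega)\,\sigma_{z_1}\cdots\sigma_{z_k}\big]$, and you assert that these converge to $\Et[F(\Phi_\Omega)\Psi(z_1,\dots,z_k)]$ for a ``continuum $k$-spin field'' $\Psi$, by uniform integrability and the results of \cite{cagane15, chhoiz15, fumo17}. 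No such result is available. What is known is the convergence of the \emph{expectations} $\sva^{-k/8}\Et_\Omega^\sva[\prod\sigma_{z_i}]$ (Lemma \ref{l:spin}) and the convergence in law of the field $\Phi^\sva_\Omega$; the renormalised spin product $\sva^{-k/8}\sigma_{z_1}\cdots\sigma_{z_k}$ itself does not converge as a random variable jointly with the field --- its second moment is $\sva^{-k/4}\to\infty$ since $\sigma_x^2=1$ --- so there is no candidate limit $\Psi(z_1,\dots,z_k)$ to pair with $F(\Phi_\Omega)$, and uniform integrability cannot identify the limit of a correlation between a macroscopic observable and a microscopic one. Constructing such mixed correlations in the continuum would be a substantial new result, not a routine consequence of the cited works.

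The paper circumvents exactly this obstacle by restricting to the characteristic functions $F=e^{i\langle\phi,\cdot\rangle}$ for $\phi$ in a countable determining class $\Lambda\subset C_c^\infty(\Omega)$: then the observable is absorbed into the external field, the numerator becomes the partition function $\widetilde Z^{\omega,\sva}_{\Omega;\lambda,h+i\widetilde\phi}$ with a \emph{complex} field, and its chaos coefficients are again pure spin correlations whose limits are supplied by Lemma \ref{l:spin}. If you specialise your Step 2 to exponential $F$ you recover this and the argument closes (modulo the complex-field extensions of the truncation, linearisation and Lindeberg estimates, Lemmas \ref{l:trunc}--\ref{l:terms}); as written, for general $F$, Step 2 does not go through. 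A secondary point: $\omega$ is not assumed Gaussian, so there is no ``Wiener chaos with respect to $\omega$'' --- you need a polynomial chaos expansion in $\tanh(\xi^\sva_\cdot)$ together with a Lindeberg-type invariance principle (the multivariate version is Lemma \ref{l:Lind}) to replace $\omega$ by the Gaussian variables built from $W$; this replacement step is absent from your outline.
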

\begin{rem}
With a slight abuse of notation, we will also use $\Pt^{\omega, \sva}_{\Omega; \lambda, h}$ and $\Et^{\omega, \sva}_{\Omega; \lambda, h}$ to denote probability and expectation with respect to $\mu^{\omega, \sva}_{\Omega; \lambda, h}$, and similarly use $\Pt^W_{\Omega; \lambda, h}$ and $\Et^W_{\Omega; \lambda, h}$ for $\mu^W_{\Omega; \lambda, h}$. We will omit $\omega$ $($or $W)$ and $\lambda$ when $\lambda\equiv 0$, and write $\Pt^\sva_\Omega$ and $\Et^\sva_\Omega$ $($or $\Pt_\Omega$ and $\Et_\Omega)$ when $\lambda=h\equiv 0$.
\end{rem}

A natural strategy for proving convergence of the RFIM measure $\mu^{\omega,\sva}_{\Omega;\lambda,h}$ would be to look at the exponential weight in the disordered Gibbs measure \eqref{Gibbs} and define the candidate continuum disordered model by
\begin{equation}\label{RN}
\frac{\d\mu^W_{\Omega;\lambda,h}}{\d\mu_\Omega}(\sigma)\ ``="\  \frac{1}{\Zc^W}\exp\left(\int \big(\sigma_x \lambda(x)W(x)\d x+\sigma_x h(x)\d x\big)\right).
\end{equation}
However, this formula is not well defined because the continuum magnetisation field $\sigma$ is a generalised function and so is $W$, which makes $\sigma W$ ill-defined. In fact, our next result proves that the disordered continuum limit $\mu^{\omega,\sva}_{\Omega;\lambda,h}$ is almost surely singular with respect to the pure continuum limit $\mu_{\Omega}$, which shows that it is hopeless to define the continuum disordered model directly through a Radon-Nikodym density. However, when averaged over the disorder $W$, the limit is absolutely continuous with respect to $\mu_\Omega$.
\begin{thm}\label{t:Sing}
For $\Pb$-a.e.~$W$, the probability measure $\mu^{W}_{\Omega;\lambda,h}$ is singular with respect to $\mu_{\Omega}$. However, the averaged quenched measure $\Eb\mu^{W}_{\Omega;\lambda,h}$ is absolutely continuous with respect to $\mu_{\Omega}$.
 \end{thm}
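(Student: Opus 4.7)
The plan is to treat the two parts separately, starting with absolute continuity of $\Eb\mu^W_{\Omega;\lambda,h}$. At the discrete level, using $Z^{\omega,\sva}_{\Omega;\lambda,h}\,\mu^{\omega,\sva}_{\Omega;\lambda,h}(A) = \Et^{\sva}_\Omega[\ind_A e^{\sum_x (\lambda^\sva_x\omega^\sva_x+h^\sva_x)\sigma_x}]$, Fubini together with the independence of $(\omega_x)$ gives, for any Borel $A\subset\Cc^{\alpha}_{loc}(\Omega)$,
\begin{equation*}
\Eb\bigl[\widetilde Z^{\omega,\sva}_{\Omega;\lambda,h}\,\mu^{\omega,\sva}_{\Omega;\lambda,h}(A)\bigr]
=\theta_\sva\,\Et^{\sva}_\Omega\!\Bigl[\ind_A\,e^{\sum_x h^\sva_x\sigma_x}\!\prod_{x\in\Omega_\sva}\!\Eb\bigl[e^{\lambda^\sva_x\omega^\sva_x\sigma_x}\bigr]\Bigr].
\end{equation*}
Since $\sigma_x^2=1$ and $\omega_x$ is centred with unit variance and finite exponential moments, a cumulant expansion yields $\prod_x\Eb[e^{\lambda^\sva_x\omega^\sva_x\sigma_x}] = \theta_\sva^{-1}(1+o(1))$, the cumulants of order $k\ge 3$ contributing at most $\sum_x (\lambda^\sva_x)^k=O(\sva^{7k/8-2})\to 0$. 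The joint convergence of Theorem~\ref{t:JntCnv} combined with the $L^2(\Pb)$-boundedness of $\widetilde Z^{\omega,\sva}_{\Omega;\lambda,h}$ from \cite{casuzy17} provides uniform integrability, and passing to $\sva\to 0$ yields the key identity
\begin{equation*}
\Eb\bigl[\Zc^W_{\Omega;\lambda,h}\,\mu^W_{\Omega;\lambda,h}(A)\bigr] = \Et_\Omega\bigl[\ind_A\,e^{\Phi_\Omega(h)}\bigr].
\end{equation*}
If $\mu_\Omega(A)=0$, the right-hand side vanishes (since $\Phi_\Omega(h)$ is $\mu_\Omega$-a.s.\ finite for $h\in C^1(\Omega)$), so $\Zc^W\mu^W(A)=0$ $\Pb$-a.s. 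Combined with the $\Pb$-a.s.\ positivity of $\Zc^W_{\Omega;\lambda,h}$ (to be established via Gaussian concentration of $\log\widetilde Z^{\omega,\sva}_{\Omega;\lambda,h}$ in the disorder), this forces $\mu^W(A)=0$ $\Pb$-a.s., and hence $\Eb\mu^W(A)=0$.

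For the singularity, my plan is to construct a $W$-measurable observable on $\Cc^{\alpha}_{loc}(\Omega)$ that concentrates at zero under $\mu_\Omega$ but at a nonzero $W$-dependent limit under $\mu^W_{\Omega;\lambda,h}$ for $\Pb$-a.e.\ $W$. A natural candidate is
\begin{equation*}
Y_\epsilon(\sigma, W) := \epsilon^{1/4}\,\bigl\langle\sigma,\lambda W_\epsilon\bigr\rangle,\qquad W_\epsilon:=W*\rho_\epsilon,
\end{equation*}
with $\rho_\epsilon$ a standard 2D mollifier. Heuristically, under $\mu_\Omega$ both the mean $\epsilon^{1/4}\langle m_\Omega,\lambda W_\epsilon\rangle$ and the variance $\epsilon^{1/2}\Var_{\mu_\Omega}[\langle\sigma,\lambda W_\epsilon\rangle]\sim\epsilon^{1/2}\cdot\epsilon^{-1/4}$ tend to zero, the variance estimate following from the short-distance singularity $\Et_\Omega[\Phi_\Omega(x)\Phi_\Omega(y)]-m_\Omega(x)m_\Omega(y)\sim|x-y|^{-1/4}$ convolved with the $\epsilon^{-2}$ pointwise variance of $W_\epsilon$. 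Under $\mu^W$, linear response around $m_\Omega$ gives $\Et^W[\sigma(x)]\approx m_\Omega(x)+\int C^c_\Omega(x,y)\lambda(y)W(y)\,\d y$, so the quenched mean picks up the contribution $\epsilon^{1/4}\iint C^c_\Omega\lambda\lambda W_\epsilon W$ with $\Pb$-expectation of order $\epsilon^{1/4}\cdot\epsilon^{-1/4}=1$, while the $\mu^W$-variance remains of the vanishing order $\epsilon^{1/4}$. The rigorous $\mu^W$-side control can be approached through the replica second-moment identity
\begin{equation*}
\Eb[(\Zc^W_{\Omega;\lambda,h})^2] = \Et_\Omega^{\otimes2}\!\bigl[\exp(\Phi_\Omega(h)+\Phi'_\Omega(h)+\textstyle\int_\Omega\lambda^2\Phi_\Omega\Phi'_\Omega\,\d x)\bigr],
\end{equation*}
derived by the same Gaussian computation applied to two independent replicas.

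The main obstacle will be promoting these in-probability convergences into $\Pb$-almost-sure statements on a common full-measure set, so as to produce a measurable witness event $A_W\subset\Cc^{\alpha}_{loc}(\Omega)$ with $\mu^W(A_W)=1$ and $\mu_\Omega(A_W)=0$ for $\Pb$-a.e.\ $W$. This should be achievable via a scale-by-scale martingale decomposition of $Y_{\epsilon_n}$ along a dyadic sequence $\epsilon_n=2^{-n}\downarrow 0$, with quantitative Borel--Cantelli tail bounds coming from the Wiener chaos expansion of $\Zc^W_{\Omega;\lambda,h}$ to obtain $\omega$-uniform control. A delicate technical point is that $\Et^W[\sigma(x)]$ is not literally given by linear response for generic $W$, so one must instead use the full Gibbs/Cameron--Martin structure to identify the $\mu^W$-limit of $Y_\epsilon$ as an appropriately renormalised quadratic form in the second Wiener chaos of $W$, for instance $Y_0(W)=\int_\Omega\lambda^2:W^2:\,\d x$ weighted by the pure Ising susceptibility kernel.
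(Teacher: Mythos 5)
Your first half (absolute continuity of $\Eb\mu^W_{\Omega;\lambda,h}$) is essentially the paper's argument: the identity $\Eb[\Zc^W_{\Omega;\lambda,h}\mu^W_{\Omega;\lambda,h}(\cdot)]=\mu_{\Omega;h}(\cdot)$ together with the $\Pb$-a.s.\ positivity of $\Zc^W_{\Omega;\lambda,h}$ (which the paper also obtains from a concentration-of-measure bound on $\log\widetilde Z^{\omega,\sva}_{\Omega;\lambda,h}$, Appendix \ref{s:pos}) and the absolute continuity of the $h$-tilted pure measure with respect to $\mu_\Omega$. The only caveat is that you should establish the identity as an equality of measures (both sides of your discrete identity converge weakly, so the limits agree as measures), rather than passing to the limit in $\Eb[\widetilde Z^{\omega,\sva}\mu^{\omega,\sva}(A)]$ for an arbitrary Borel set $A$, which weak convergence does not permit.

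The singularity half contains a genuine gap. The distinguishing-statistic idea is sound in principle, and is in fact the same heuristic that drives the paper's proof: the observable $X^\sva_N$ in \eqref{XNa} is a block-coarse-grained version of $\langle\Phi,W_\lambda\rangle$, and Lemmas \ref{l:gConv}--\ref{l:IConv} are precisely the statement that it is large under the quenched (tilted) law but typically small under the product law. The problem is that every quantitative step you need on the $\mu^W$ side --- the linear-response formula for $\Et^W[\sigma(x)]$, the claim that the $\mu^W$-variance of $Y_\epsilon$ is $O(\epsilon^{1/4})$, and the identification of the limit $Y_0(W)$ as a renormalised second chaos --- is asserted without any mechanism for proving it. The continuum measure $\mu^W$ is defined only through the ratios $\Zc^W_{\Omega;\lambda,h+i\phi}/\Zc^W_{\Omega;\lambda,h}$ for \emph{deterministic} test functions $\phi$; it carries no Gibbs or Cameron--Martin structure in the $\sigma$-variable (the ill-definedness of \eqref{RN} is exactly what the theorem quantifies), so ``the full Gibbs/Cameron--Martin structure'' you invoke is not available. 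Computing $\Et^W[\langle\Phi,\lambda W_\epsilon\rangle]$ for a $W$-dependent test function would require controlling $\Zc^W_{\Omega;\lambda,h+it\lambda W_\epsilon}$, a chaos expansion in which the noise enters both the kernel and the integrator, and nothing in the proposal addresses this renormalisation problem.

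The second, related obstruction is the passage from lattice to continuum. Weak convergence of $\mu^{\omega,\sva}_{\Omega;\lambda,h}$ to $\mu^W_{\Omega;\lambda,h}$ does not give convergence of quenched expectations of $W$-dependent observables, nor of Radon--Nikodym derivatives; this is why the paper conditions on discretised coarse-grained variables ($\Fc_{N,m}$ in \eqref{WPNmij}) so that the fractional moment $\Eb\Et[\Qc_{N,m}^{1/2}]$ becomes lower semicontinuous under weak convergence of the discretised laws (see \eqref{e:Fat} and Remark \ref{r:Radon}, which explains why the undiscretised version fails). Your proposed ``scale-by-scale martingale decomposition with Borel--Cantelli'' would still have to produce, for $\Pb$-a.e.\ fixed $W$, convergence of $Y_{\epsilon_n}$ under the \emph{continuum} measure $\mu^W$, and the only access to $\mu^W$ is through such limits of lattice quantities. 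Without a replacement for the paper's discretisation-plus-fractional-moment machinery (or a rigorous chaos-expansion computation of the quenched first and second moments of $Y_\epsilon$), the proposal does not close.
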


\paragraph*{Discussion}
One of the first results that identifies a disordered continuum limit for a disordered system is the work by Alberts, Khanin and Quastel,  who showed that for the directed polymer model in dimension $1$, if the disorder strength is sent to zero at a suitable rate as the lattice spacing tends to 0, then the partition functions converge to the solution of the one-dimensional KPZ equation \cite{alkhqu14a}, while the polymer measure converges to a continuum limit called the {\em continuum directed polymer} \cite{alkhqu14b}. Subsequently, Caravenna, Sun and Zygouras made the observation that such disordered continuum limits should exist for more general disorder relevant systems \cite{casuzy17}, where arbitrarily weak disorder perturbation of an underlying pure model changes its behaviour on large scales, and hence tuning the disorder strength down to zero suitably as the lattice spacing tends to zero allows one to construct a continuum limit with non-trivial disorder dependence. They formulated general criteria for the partition functions of a disordered model to have non-trivial limits, which is the first step to construct the disordered continuum model. These criteria were verified for the disordered pinning model, with the continuum disordered pinning model subsequently constructed in \cite{casuzy16}. They were also verified in \cite{casuzy17} for the partition functions of the RFIM with $+$ boundary condition, which provides the starting point of the present paper.

Both the directed polymer model and the disordered pinning model have a time direction, which allows one to use the Markov property
to construct the continuum models directly from the continuum limit of the partition functions. For the RFIM in two dimensions, such an approach is no longer feasible because it would require knowledge of the continuum limit of the partition functions for all domains with all boundary conditions, and it is not even clear how to define general boundary conditions for the continuum RFIM since the magnetisation field is only a distribution. Instead, we will use characteristic functions to characterise the law of the continuum magnetisation field and prove convergence in Theorem \ref{t:JntCnv}.

Heuristically, the singularity in Theorem \ref{t:Sing} can be understood as follows: the disorder splits the system into sub-domains which behave as essentially independent components with a small random external field, tilting the law with respect to the measure with no external field. This occurs on arbitrarily small scales. If the effect of random tilting on each subdomain remains strong enough on smaller and smaller scales, then this gives singularity. In particular, such singularity should arise on any open subdomain, as we will show in the proof. When the disordered law is averaged, the spatial fluctuation is averaged out and the effect of the disorder is smoothed, removing the singularity. The same heuristic applies to the continuum directed polymer model and disordered pinning model. Indeed, the analogue of Theorem \ref{t:Sing} has been proved for both models, where the Markov property plays an essential role. The proof of Theorem \ref{t:Sing} for the continuum RFIM is much more subtle for the reasons described before and will constitute the bulk of the paper.

Our results also extend the work of Camia, Garban and Newman \cite{cagane15, cagane16}, where they constructed and analysed the near-critical scaling limit of the two-dimensional Ising model, which corresponds to the continuum RFIM with a deterministic external field, namely, $\mu^{\omega,\sva}_{\Omega;\lambda,h}$ with $\lambda=0$. It was then shown in \cite{cajine20} that when $h\equiv c\neq 0$, the continuum model has exponential decay of correlations.

\paragraph*{Open problems}
Given recent breakthroughs on the exponential decay of correlations for the RFIM on $\mathbb Z^2$~\cite{dixi19, dixi19a, aihape19}, it would be very interesting to prove the same result for the continuum RFIM and to understand how the magnetisation field depends on the mean $h$ and strength $\lambda$ of the random field. The latter are interesting and challenging questions that are also open for the lattice RFIM.

For the critical Ising model on $\mathbb Z^2$, instead of considering the magnetisation field as in \cite{cagane15, cagane16} and study its scaling limit, one can also study the interfaces between $+$ and $-$ spins, which was shown to converge to the conformal loop ensemble CLE(3) in \cite{beho19}. When a deterministic external field is present in the continuum, the law of the magnetisation field is tilted and the same should hold for the law of the interfaces. Is it possible to characterise the law of interfaces in the continuum RFIM when disorder is present? By Theorem \ref{t:Sing}, we expect it to be singular with respect to the law of CLE(3). However, if we consider the law of a single interface, such as the interface that arises from imposing a Dobrushin boundary condition, then it is conjectured (communicated to us by Christophe Garban) that almost surely in the disorder, the law of the interface is absolutely continuous with respect to the law of an SLE(3).

Another interesting question is to investigate whether one can make sense of \eqref{RN} in the same spirit as in the solution theory for singular SPDEs~\cite{ha14, gip15}, which also had to deal with products of distributions such as $\sigma W$ in \eqref{RN}. And is it possible to construct the measures $\mu_\Omega$ and $\mu_{\Omega; \lambda, h}^W$ as the equilibrium solutions of SPDEs, similar to the stochastic quantisation of $\Phi^4$ theory~\cite{ha14}? Note that the $\Phi^4_3$ measure can formally be seen as a Gibbs change of measure of the Gaussian free field (GFF), similar in spirit to \eqref{RN}, although it is also singular with respect to the reference GFF~\cite{bagu20}. We also remark that in the study of singular SPDEs, results such as Theorem \ref{t:JntCnv} are known as {\em weak universality} (see e.g.~\cite{haqu18}).

\paragraph*{Organisation}
The rest of the paper is organised as follows. In Section \ref{s:Pre} we provide some background and technical results that we will need for the rest of the paper. This includes details on Ising spin correlations, conformal invariance, Besov-H\"older spaces, white noise and Wiener chaos expansions.

In Section \ref{s:FDD} we prove the uniqueness of the limit in Theorem \ref{t:JntCnv}. The characteristic function of the magnetisation field tested against smooth functions can be written as the ratio of partition functions.
This reduces this part of the proof to establishing joint convergence of the white noise approximation and finite families of partition functions.
For this, we use a Lindeberg principle similar to \cite{casuzy17} and show that the limit is a family of Wiener chaos expansions.

\red{In Section \ref{s:Tight} we complete the proof of Theorem \ref{t:JntCnv} by showing tightness, which can be reduced to the case without a random field that was considered in \cite{fumo17}.}

In Section \ref{s:Sing} we prove Theorem \ref{t:Sing} by studying the Radon-Nikodym derivatives of the continuum magnetisation fields conditioned on their average as well as the average of the white noise on disjoint subdomains. These conditioned Radon-Nikodym derivatives are then controlled by discrete approximations and, via suitable fractional moment bounds, are shown to converge to zero as the conditioning is gradually refined.

In Appendix \ref{s:Radon}, we give a sufficient condition for the weak limit of one sequence of probability measures to be absolutely continuous with respect to the weak limit of a second sequence.

In Appendices \ref{s:mom} and \ref{s:pos}, we prove that the RFIM partition functions $\widetilde Z^{\omega, \sva}_{\Omega;\lambda, h}$ have uniformly bounded positive and negative moments of all orders, and hence the same holds for their continuum limit $\Zc^W_{\Omega; \lambda, h}$.

In Appendix \ref{s:BH}, we show that for any $\Phi$ in a Besov-H\"older space $\Cc^\alpha$ with $\alpha>-1$, we can define $\int_B \Phi$
for any subdomain $B$ with a regular enough boundary, so that the total magnetisation of the continuum RFIM on $B$
is well-defined. This is needed in Section \ref{s:Sing}.

\section{Preliminaries}\label{s:Pre}
In this section, we provide some background and list some technical results needed in the proof.
\paragraph*{Spin correlations}
A feature of many two-dimensional critical statistical mechanical models is that the continuum scaling limits are conformally invariant.
In \cite{sm10}, Smirnov established the conformal invariance of fermionic observables in the critical Ising model.
This facilitated rigorously establishing conformally invariant scaling limits including convergence of Ising loops to a CLE \cite{beho19}, convergence of the Ising interface to a chordal SLE \cite{chduhokesm14} and, fundamental to this work, convergence of spin correlations \cite{chhoiz15}. More specifically, we will need the following result.

\begin{lem} \label{l:spin} \red{Let $\Omega\subset \Rb^2$ be a simply connected bounded open domain with a piecewise $C^1$ boundary}.
For the critical Ising model on $\Omega_\sva:=\Omega \cap \sva \Zb^2$ with $+$ boundary condtition, there is a symmetric function
$\phi^+_\Omega:\bigcup_{k=1}^\infty\Omega^k\rightarrow \Rb$ such that for all $n\in\Nb$ and distinct $x_1,...,x_n\in\Omega$,
\begin{flalign}\label{e:ChHoIz}
\sva^{-k/8}\Et_{\Omega}^{\sva,+}\left [\prod_{i=1}^k\sigma_{x_i}\right ] \longrightarrow
\Cc^k\phi^+_\Omega(x_1,...,x_k) \qquad \mbox{as } \sva\downarrow 0,
\end{flalign}
where $\Cc$ is a known constant, $\sigma_x:=\sigma_{x_\sva}$ with $x_{\sva}:=a \lfloor a^{-1}x\rfloor$, and the convergence holds both pointwise and in $L^2(\Omega^n)$.

There exists a symmetric function $f_\Omega: \bigcup_{k=1}^\infty \Omega^k \rightarrow \Rb^+$ continuous everywhere except on the diagonal, such that uniformly in $I\subset \Omega$ with $|I|=k$, $k\in\Nb$, and $\sva\in(0,1]$, we have
	\begin{gather}\label{e:fUpp}
		0\leq \Et_\Omega^{\sva, +}\Bigg[\prod_{x\in I}\sigma_x\Bigg] \leq \sva^{k/8} f_\Omega(I),	 \\
	          \Vert f_\Omega\Vert_{L^2(\Omega^k)}^2\leq C^k(k!)^{1/4}. \label{fOmbd}
	\end{gather}
\end{lem}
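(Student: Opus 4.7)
The lemma has two parts that are essentially independent in character. The first assertion is a direct invocation of the main theorem of Chelkak--Hongler--Izyurov \cite{chhoiz15}, which establishes both the existence of the conformally covariant limit $\phi^+_\Omega$ and the pointwise convergence \eqref{e:ChHoIz} for the critical Ising model on $\Omega_\sva$ with $+$ boundary condition. The constant $\Cc$ is the universal normalising constant fixed there so that the two-point function has a specific leading coefficient. I would simply cite this result.

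For the second part, positivity in \eqref{e:fUpp} is immediate from the first Griffiths inequality for the ferromagnetic Ising model with $+$ boundary. For the upper bound, my plan is to construct an explicit dominating function capturing both the boundary behaviour and the on-diagonal singularities. Under $+$ boundary condition, the boundary magnetisation contributes a factor of order $d(x,\partial\Omega)^{-1/8}$ per spin, while in the bulk the pfaffian/determinantal structure of Ising correlations suggests a dominating term behaving like a sum over pairings of $I$, each pairing contributing a product of two-point factors of the form $|x_i-x_j|^{-1/4}$. I would make this rigorous uniformly in $\sva$ by combining the Chelkak--Smirnov fermionic observable estimates with FKG/monotonicity, passing from a general $\Omega$ to a reference geometry (a disk or half-plane) and transporting the bound back via monotonicity in the domain.

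The $L^2$ bound \eqref{fOmbd} would then be obtained by squaring the dominating function and integrating over $\Omega^k$. The boundary singularity $d(x,\partial\Omega)^{-1/8}$ is integrable on a piecewise $C^1$ domain since the exponent is small, and each pairing contributes a factor of order $C^k$ by a direct Hardy--Littlewood--Sobolev-type computation on $\Omega^k$. A naive count gives $(k-1)!!\asymp(k!)^{1/2}$ pairings, so $\int f_\Omega^2 \leq C^k(k!)^{1/2}$; one then needs a symmetrisation/hypercontractivity refinement to sharpen this to $C^k(k!)^{1/4}$. Once \eqref{e:fUpp}--\eqref{fOmbd} are in hand, the uniform $L^2$ domination combined with the pointwise convergence from Part 1 and the dominated convergence theorem immediately upgrades pointwise convergence to convergence in $L^2(\Omega^n)$.

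The main obstacle is obtaining the sharp exponent $1/4$ on $k!$ in \eqref{fOmbd}. A direct pairing count gives $(k!)^{1/2}$, and Cauchy--Schwarz-type arguments typically yield $k!$ or worse. Extracting $(k!)^{1/4}$ requires either exploiting pfaffian cancellations specific to the 2D Ising model or, in the spirit of \cite{fumo17}, using hypercontractive moment estimates for the critical Ising magnetisation field. This sharp factorial control is essential for the rest of the paper because \eqref{fOmbd} is precisely what guarantees summability of the Wiener chaos expansions that appear later in the construction of $\Zc^W_{\Omega;\lambda,h}$.
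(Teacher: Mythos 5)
The first half of your proposal matches the paper: the pointwise convergence \eqref{e:ChHoIz} is indeed just a citation of Chelkak--Hongler--Izyurov, and the upgrade to $L^2(\Omega^n)$ convergence is by dominated convergence using the uniform bound \eqref{e:fUpp} together with \eqref{fOmbd} (the paper also notes that the piecewise $C^1$ boundary is exactly what makes $\int_\Omega d(x,\partial\Omega)^{-1/4}\,\d x<\infty$, which your remark about integrability of the boundary singularity covers). Where you diverge is in the treatment of \eqref{e:fUpp} and \eqref{fOmbd}: the paper does not reprove these at all, but cites them directly as Lemmas 8.1 and 8.3 of \cite{casuzy17}, where the dominating function is not a sum over pairings but a single product $f_\Omega(I)\le C^k\prod_{x\in I}\big(\min_{y\in I\setminus\{x\}}|x-y|\wedge d(x,\partial\Omega)\big)^{-1/8}$; the $(k!)^{1/4}$ in \eqref{fOmbd} then comes from integrating the square of this product over $\Omega^k$, with no pairing combinatorics involved.

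Your from-scratch route for the second part has a genuine gap, and it is precisely the step you flag. First, the pairing-sum ansatz is not justified in this setting: spin correlations are not Pfaffians of two-point spin functions, and Gaussian domination of even correlations by sums over pairings is a zero-field statement, whereas the $+$ boundary condition forces nonvanishing odd correlations and a nontrivial one-point profile. Second, even granting the ansatz, your arithmetic for \eqref{fOmbd} does not close: squaring a sum over $(k-1)!!\asymp (k!)^{1/2}$ pairings produces $\asymp k!$ cross terms, each integrating to $C^k$, so the route yields $\Vert f_\Omega\Vert_{L^2(\Omega^k)}^2\le C^k\,k!$ rather than the claimed $C^k(k!)^{1/2}$ --- and $C^k k!$ is fatal for the applications (e.g.\ the positive-moment bound in Appendix B needs $\sum_k C^k \Vert f_\Omega\Vert_{L^2(\Omega^k)}^2/k!<\infty$). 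Your proposed fixes (``Pfaffian cancellations'' or ``hypercontractivity'') are named but not carried out, so the key estimate \eqref{fOmbd} is asserted rather than proved. The correct repair is either to cite \cite[Lemmas 8.1 and 8.3]{casuzy17} as the paper does, or to adopt the nearest-neighbour-distance product bound, for which the $(k!)^{1/4}$ follows from a direct (if somewhat delicate) integration argument rather than from any pairing count.
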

\begin{proof}
The pointwise convergence in \eqref{e:ChHoIz} was established in \cite{chhoiz15}, which was then extended to $L^2$ convergence in \cite[Section 8]{casuzy17} by dominated convergence, using \eqref{e:fUpp} (\cite[Lemma 8.1]{casuzy17}) and \eqref{fOmbd}
(\cite[Lemma 8.3]{casuzy17}). \red{The assumption that $\Omega$ has a piecewise $C^1$ boundary is to ensure that $\int_\Omega \frac{{\rm d}x}{d(x, \partial \Omega)^{1/4}}<\infty$, which is used to prove the $L^2$ convergence in \eqref{e:ChHoIz} (see \cite[(8.25)]{casuzy17}).}
\end{proof}

\paragraph*{White noise and chaos expansions}
The partition functions $Z^{\omega,\sva}_{\Omega;\lambda,h}$ defined in \eqref{Z} encode much of the essential information of the system and will be vital for studying the law of the quenched magnetisation field $\mu^{\omega,\sva}_{\Omega;\lambda,h}$ defined in \eqref{mu}. The  partition functions and their scaling limits have been studied in \cite{casuzy17} using polynomial and Wiener chaos expansions which we now review.

Let $\Tb$ be a \red{countable} set (typically $\Omega_\sva$) and $\Pcf(\Tb):=\{I\subseteq\Tb:|I|<\infty\}$. For $I\in\Pcf(\Tb)$ and a vector $u\in \Rb^{\Tb}$ we write $u^I:=\prod_{i\in I}u_i$. Any function $\psi:\Pcf(\Tb)\rightarrow \Rb$ can be used to define a \emph{multi-linear polynomial}
\[\Psi(u)=\sum_{I\in\Pcf(\Tb)}\psi(I)u^I\]
and we call $\psi$ the \emph{kernel} of $\Psi$. Let $\zeta:=(\zeta_i)_{i\in\Tb}$ be a family of independent random variables. We say that a random variable  admits a \emph{polynomial chaos expansion} with respect to $\zeta$ if it can be expressed as $\Psi(\zeta)$ for some multi-linear polynomial $\Psi$. For $l\in\Nb$, we write $\Psi^{\leq l}$ for the chaos expansion with kernel $\psi_{\leq l}$ defined by $\psi_{\leq l}(I)=\psi(I)\ind_{\{|I|\leq l\}}$.

Recall the definition of $\widetilde{Z}_{\Omega;\lambda, h}^{\omega,\sva} $ from \eqref{tildeZ}. Write $\xi^\sva_x:=\lambda_x^\sva\omega^\sva_x+h_x^\sva$ for the random external field; then, using a high temperature expansion,
\begin{flalign}
\widetilde{Z}_{\Omega;\lambda, h}^{\omega,\sva}
& =\theta_\sva \Et_{\Omega}^\sva\left[\exp\left(\sum_{x\in\Omega_\sva}\xi^\sva_x\sigma_x\right)\right] \notag\\
& =\theta_\sva \Et_{\Omega}^\sva\left[\prod_{x\in\Omega_\sva}\left(\cosh(\xi^\sva_x)+\sigma_x\sinh(\xi^\sva_x)\right)\right] \notag\\
& = \theta_\sva\cosh(\xi^\sva_\cdot)^{\Omega_\sva}\sum_{I\subseteq\Omega_\sva}\Et_{\Omega}^\sva\left[\sigma_\cdot^I\right] \tanh(\xi^\sva_\cdot)^I. \label{e:Prefactor}
\end{flalign}
where the prefactor $\theta_\sva\cosh(\xi_\cdot^\sva)^{\Omega_\sva}$ converges to $1$ in probability by a Taylor expansion (see Lemma \ref{l:Theta}). We want to understand the limiting behaviour of the remaining polynomial chaos expansion; for this we first introduce white noise and the Wiener chaos expansion.

The limit of $\widetilde{Z}_{\Omega;\lambda, h}^{\omega,\sva}$ will be expressed as a Wiener chaos expansion with respect to a {\em white noise} $W$ on $\mathbb R^2$, which can be identified with a Gaussian process $W=(W(f))_{f\in L^2(\Rb^2)}$ with mean $\Eb[W(f)]=0$ and covariance $\Eb[W(f)W(g)]=\int f(x)g(x)\d x$.

If $A_1, A_2,\dots$ are disjoint Borel sets with finite Lebesgue measure then the random variables $W(A_i):=W(\ind_{A_i})$ are independent centred Gaussian random variables with variance the Lebesgue measure of $A_i$. Moreover, we have that the relation $W(\bigcup_{i\geq 1}A_i)=\sum_{i\geq 1}W(A_i)$ holds a.s.\ and we write $\int f(x)W(\d x):=W(f)$ even though $W(\cdot)$ is a.s.\ not a signed measure.

Recalling that $S_\sva(x)$ is the square of side length $a$ centred around $x$ we have that
\begin{flalign}\label{e:vartheta}
\vartheta_x^\sva:=\sva^{-1}\int_{S_\sva(x)}W(\d y)
\end{flalign}
is a standard Gaussian for any $\sva>0$ and $x\in\Rb^2$. Furthermore, the sequence of distributions
\begin{flalign}\label{e:Wa}
 W^{\sva}:=\sva\sum_{x\in\Omega_\sva}\vartheta^\sva_x \delta_x=\sum_{x\in\Omega_\sva}W(S_\sva(x)) \delta_x
\end{flalign}
converges a.s.\ to the white noise $W$.


For white noise $W$ on $\Rb^2$ we define the Wiener chaos expansion with kernel $\phi_\Omega^+$ from \eqref{e:ChHoIz} as
\begin{equation}\label{e:Par}
\Zc_{\Omega;\lambda, h}^{W} = 1+\sum_{n=1}^\infty \frac{\Cc^n}{n!} \idotsint_{\Omega^n}\phi_\Omega^+(x_1,...,x_n)\prod_{i=1}^n(\lambda(x_i)W(\d x_i)+h(x_i)\d x_i).
\end{equation}
By \cite[Theorems 3.14 \& 2.3]{casuzy17}, the rescaled partition function $\widetilde{Z}_{\Omega;\lambda, h}^{\omega,\sva}$ converges in $\Pb$-distribution to $\Zc_{\Omega;\lambda, h}^{W}$, and its Wiener chaos expansion is convergent in $L^2$. In this paper, we will need the following stronger result.
\begin{lem} \label{l:mom}
Let $\widetilde{Z}_{\Omega;\lambda, h}^{\omega,\sva}$ be as in Theorem \ref{t:JntCnv}, then  $\widetilde{Z}_{\Omega;\lambda, h}^{\omega,\sva}$ converges in distribution to $\Zc_{\Omega;\lambda, h}^{W}$ as $\sva\downarrow 0$.
Furthermore,
for any $p \geq 0$ (or $p<0$ with $\omega$ satisfying the concentration of measure inequality \eqref{assD2}), $\Eb[(\Zc_{\Omega;\lambda, h}^{W})^p]= \lim_{\sva\downarrow 0} \Eb[(\widetilde{Z}_{\Omega;\lambda, h}^{\omega,\sva})^p] <\infty$. In particular, $\Zc^W_{\Omega; \lambda, h}>0$ $\Pb$-a.s.
%
\end{lem}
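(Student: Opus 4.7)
The plan is to address the three assertions in sequence. Distributional convergence $\widetilde Z^{\omega,\sva}_{\Omega;\lambda,h}\Rightarrow \Zc^W_{\Omega;\lambda,h}$ is precisely \cite[Theorem 3.14]{casuzy17}, which we can quote directly: the high-temperature expansion \eqref{e:Prefactor} has prefactor $\theta_\sva\cosh(\xi^\sva_\cdot)^{\Omega_\sva}\to 1$ in probability, and a Lindeberg-type principle combined with the $L^2$ convergence of Ising correlations from Lemma \ref{l:spin} passes the remaining polynomial chaos in $\omega^\sva$ to the Wiener chaos \eqref{e:Par}. The rest of the proof is the uniform moment bound, which will be handled in Appendices \ref{s:mom} and \ref{s:pos}.

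For positive moments I would use a replica computation. For integer $p\geq 1$, integrating $\omega$ out first in $\Eb[(\widetilde Z^{\omega,\sva})^p]$ produces an expectation over $p$ independent critical Ising copies $\sigma^{(1)},\ldots,\sigma^{(p)}$ with an effective pair interaction of strength $\sva^{7/4}\lambda(x)^2\sigma^{(i)}_x\sigma^{(j)}_x$, while the diagonal contribution $\sum_x (\lambda_x^\sva)^2/2$ is exactly cancelled by $\theta_\sva^p$; for general $\omega$ the finite exponential moment assumption produces only higher-order corrections. Expanding the interaction exponential leads to a sum whose terms are $p$-replica Ising correlations on $\Omega_\sva$, each controlled uniformly in $\sva$ via the pointwise estimate \eqref{e:fUpp} and the $L^2$ bound \eqref{fOmbd}; the $(k!)^{1/4}$ growth in \eqref{fOmbd} is beaten by the factorial from the exponential expansion, giving a bound uniform in $\sva$. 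Non-integer $p\geq 0$ follow by H\"older. Combined with distributional convergence, uniform $L^{p+\varepsilon}$ boundedness yields uniform integrability and hence $\Eb[(\Zc^W)^p]=\lim_\sva\Eb[(\widetilde Z^{\omega,\sva})^p]$ for every $p\geq 0$.

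The main obstacle is the negative-moment bound, which also delivers strict positivity of the limit. A naive application of concentration to $\log\widetilde Z^{\omega,\sva}$ fails because its $L^2$-Lipschitz constant in $\omega$ is of order $(\sum_x(\lambda_x^\sva)^2)^{1/2}\asymp \sva^{-1/8}$, which diverges as $\sva\downarrow 0$. I would instead combine the concentration assumption \eqref{assD2} with a martingale decomposition of $\log\widetilde Z^{\omega,\sva}$ over a fixed ordering of the sites of $\Omega_\sva$; the martingale differences involve conditional RFIM magnetisations $\lambda_x^\sva\Et^{\omega,\sva}_{\Omega;\lambda,h}[\sigma_x]$, whose size at criticality is captured by Lemma \ref{l:spin} and yields a total quadratic variation that is uniformly bounded in $\sva$. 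This should give an exponential lower tail $\Pb(\widetilde Z^{\omega,\sva}<e^{-t})\leq C e^{-ct}$, uniform in $\sva$, hence $\sup_\sva\Eb[(\widetilde Z^{\omega,\sva})^{-p}]<\infty$ for every $p>0$. A Skorokhod representation combined with Fatou's lemma then promotes this to $\Eb[(\Zc^W)^{-p}]<\infty$; in particular $\Zc^W>0$ $\Pb$-a.s., and the corresponding uniform integrability upgrades distributional convergence to convergence of negative moments as well, completing the proof.
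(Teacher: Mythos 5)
Your first part (quoting \cite[Theorem 3.14]{casuzy17} for the distributional convergence) matches the paper. For the positive moments you take a genuinely different route: a replica computation for integer $p$, expanding the pairwise interaction $\sum_x(\lambda_x^\sva)^2\sigma_x^{(i)}\sigma_x^{(j)}$ and controlling the resulting multi-replica correlations via \eqref{e:fUpp}--\eqref{fOmbd}, with H\"older for non-integer $p$. This is workable (the $\sva$-powers do match the Riemann-sum normalisation, and the $(k!)^{1/4}$ growth is beaten by the $1/k!$ from the exponential), but it requires nontrivial combinatorial bookkeeping for general $p$ and for non-Gaussian $\omega$. The paper instead applies hypercontractivity to the polynomial chaos expansion in the normalised variables $\eta^\sva_x$, which reduces \emph{every} $p>2$ to the already-established second-moment computation with inflated coefficients $c_p\eta^\sva_x$; this is shorter and avoids the replica combinatorics. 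Either route is legitimate.

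The negative-moment argument, however, has a genuine gap. You correctly observe that the global Lipschitz constant of $\log\widetilde Z^{\omega,\sva}$ diverges like $\sva^{-1/8}$, but your proposed fix --- a martingale decomposition whose increments are claimed to be controlled because the conditional magnetisations $\lambda_x^\sva\Et^{\omega,\sva}_{\Omega;\lambda,h}[\sigma_x]$ are ``captured by Lemma \ref{l:spin}'' --- does not work as stated. Lemma \ref{l:spin} bounds correlations under the \emph{pure} measure $\Et^{\sva,+}_\Omega$; the \emph{quenched} magnetisation $\Et^{\omega,\sva}_{\Omega;\lambda,h}[\sigma_x]$ is a random variable of $\omega$ with no a priori bound of order $\sva^{1/8}$, and it is precisely on the bad event where $\widetilde Z^{\omega,\sva}$ is small that the gradient can blow up. Moreover, even granting a bounded quadratic variation, an Azuma-type bound only concentrates $\log\widetilde Z^{\omega,\sva}$ around its mean (or median), and you never address why $\Eb[\log\widetilde Z^{\omega,\sva}]$ is bounded below uniformly in $\sva$ --- which is the other half of the difficulty, not a formality.

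The paper resolves both issues at once with the left-tail inequality \eqref{eq:thebound} of Caravenna--Toninelli--Torri for convex, \emph{locally} Lipschitz functions: it only requires controlling $|\nabla f_\sva(\omega)|$ on an event of uniformly positive probability, not everywhere. The reference level $b$ is supplied by Paley--Zygmund applied to $\widetilde Z^{\omega,\sva}$ itself (using convergence of its first and second moments), and on the event $\{\widetilde Z^{\omega,\sva}\geq C_{1,h}/2\}$ the squared gradient $\Et^{\omega,\sva,\otimes 2}_{\Omega;\lambda,h}[L_\lambda(\sigma,\sigma')]$ is converted, via the bound $1/\widetilde Z^{\omega,\sva}\le 2/C_{1,h}$ and an annealed Chebyshev estimate, into a \emph{pure} two-replica overlap computation that is uniformly bounded. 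If you want to salvage a martingale-style argument you would need to reproduce exactly this restriction to the good event; as written, your step ``total quadratic variation uniformly bounded in $\sva$'' is unjustified and the claimed exponential lower tail does not follow.
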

\noindent
We only need to show that for any $p\in\Rb$, $\limsup_{\sva\downarrow 0} \Eb[(\widetilde{Z}_{\Omega;\lambda, h}^{\omega,\sva})^p] <\infty$. We will distinguish between $p>0$ and $p<0$, which will be treated in Appendix \ref{s:mom} and \ref{s:pos} respectively.

\paragraph*{Besov-H\"older spaces}
We now describe the Besov-H\"older spaces $\Cc^\alpha$ in which the magnetisation field takes values, following \cite[Section 2]{fumo17}.
First, for $r\in\Nb$, write $C^r$ to denote the space of $r$-times continuously differentiable functions on $\Rb^2$ and, for $f \in C^r$, define the norm
\[\Vert f\Vert_{C^r}:=\sum_{|i|\leq r}\Vert \partial_i f\Vert_\infty.\]
For $\alpha<0$ let $r_\alpha=-\lfloor\alpha\rfloor$ and
\[\Bs^{r_\alpha} := \{f\in C^{r_\alpha} : \Vert f\Vert_{C^r}\leq 1 \text{ and } \text{Supp}(f) \subset B(0, 1)\}.\]
Then, for $f\in C_c^\infty$, denote
\begin{equation}\label{Calpha1}
\Vert f\Vert_{\Cc^\alpha}:=\sup_{\theta\in(0,1]}\sup_{x\in\Rb^2}\sup_{g\in\Bs^{r_\alpha}}\theta^{-\alpha-2}\int f(y)g\left(\frac{y-x}{\theta}\right)\d y.
\end{equation}
The Besov-H\"older space $\Cc^\alpha$ is the completion of $C_c^\infty$ with respect to the norm $\Vert \cdot\Vert_{\Cc^\alpha}$. For every open domain $\Omega\subset \Rb^2$, the local Besov-H\"older space $\Cc^\alpha_{loc}(\Omega)$ is the completion of $C_c^\infty$ with respect to  the family of seminorms $(\Vert \chi\cdot\Vert_{\Cc^\alpha})_{\chi\in C_c^\infty(\Omega)}$.


There is an equivalent characterisation of $\Cc^\alpha$ through \emph{multi-resolution analysis}. A \emph{multi-resolution analysis} of $L^2$ is an increasing sequence $(V_n)_{n\in\Zb}$ of subspaces of $L^2$, together with a function $\phi\in L^2(\Rb^2)$, such that
\begin{enumerate}
\item
$\bigcup_n V_n$ is dense in $L^2$;
\item
$\bigcap_n V_n=\{0\}$;
\item
$f \in V_n$ if and only if $f(2^{-n}(\cdot)) \in V_0$;
\item
$(\phi(\cdot - k))_{k\in \Zb^2}$ is an orthonormal basis of $V_0$.
\end{enumerate}
\red{Such a function $\phi$ is called the \emph{scaling function} of $(V_n)_{n\in \Zb}$.}
Denote by $W_n$ the orthogonal complement of $V_n$ in $V_{n+1}$. Then for any $r\in\Nb$, there exist $\phi, (\psi^{(i)})_{i=1,2,3}$ such that
\begin{enumerate}
\item
$\phi, (\psi^{(i)})_{i=1,2,3}$ all belong to $C^r_c$ with support in $B(0,1)$;
\item
$\phi$ is the scaling function of a multi-resolution analysis $(V_n)_{n\in \Zb}$;
\item
$(\psi^{(i)}(\cdot-k)_{k\in \Zb^2, i=1,2,3})$ is an orthonormal basis of $W_0$;
\item
for each $i=1,2,3$ and $\beta_1, \beta_2\in \Nb_0$ with $\beta_1+\beta_2<r$, we have $\int x_1^{\beta_1} x_2^{\beta_2}\psi^{(i)}(x)\d x=0$.
\end{enumerate}
For any $n \in \Zb$ and $x\in \Rb^2$, let $\phi_{n,x}(y) := 2^n\phi(2^n(y - x))$ and $\psi^{(i)}_{n,x}(y) := 2^n\psi^{(i)}(2^n(y - x))$, and denote
$\Lambda_n = \Zb^2/2^n$. Then $(\phi_{n,x})_{x\in\Lambda_n}$ is an orthonormal basis of $V_n$, while $(\psi^{(i)}_{n,x})_{x\in\Lambda_n, n\in\Zb, i=1,2,3}$ is an orthonormal basis of $L^2$.

Denote by $\Vs_n$ and $\Ws_n$ the orthogonal projections on $V_n$, $W_n$ respectively. For  $f\in L^2(\Rb^2)$, we have
\begin{equation}\label{e:VW}
\Vs_nf=\sum_{x\in\Lambda_n}\langle f,\phi_{n,x}\rangle_{L^2}\phi_{n,x}, \quad \Ws_nf=\sum_{x\in\Lambda_n, i=1,2,3}\langle f,\psi^{(i)}_{n,x}\rangle_{L^2}\psi^{(i)}_{n,x}.
\end{equation}
We then have that, for any $k\in\Zb$,
\begin{equation}\label{e:fL2}
f=\Vs_kf+\sum_{n=k}^\infty\Ws_nf.
\end{equation}
By \cite[Proposition 2.16]{fumo17}, $\Cc^\alpha$ can be equivalently defined as the completion of $C_c^\infty$ with respect to the norm
\begin{equation}
\Vert f\Vert_{\Cc^\alpha}:=\Vert \Vs_0f\Vert_\infty+
\sup_{n\in\Nb}2^{\alpha n}\Vert \Ws_nf\Vert_\infty.
\end{equation}
For an open domain $\Omega\subset \Rb^2$, $\Cc^\alpha_{loc}(\Omega)$ is the completion of $C_c^\infty$ with respect to  the family of seminorms $(\Vert \chi\cdot\Vert_{\Cc^\alpha})_{\chi\in C_c^\infty(\Omega)}$. \red{We note that the discrete piecewise constant magnetisation field $\Phi_\Omega^\sva$ belongs to the space of distributions $\Cc^\alpha_{loc}(\Omega)$ for any $\alpha<0$, and we write $\langle \Phi,f\rangle$ to denote $\Phi$ acting on the function $f$.}

\begin{rem}
Theorem \ref{t:JntCnv} also holds with $\Cc^\alpha_{loc}(\Omega)$ replaced by the Besov spaces $\Bc_{p,q}^{\alpha,loc}(\Omega)$ for any $p,q\in[1,\infty]$, also constructed in \cite{fumo17}. For simplicity, we restrict ourselves to $\Cc^\alpha_{loc}(\Omega)$ which is continuously embedded in $\Bc_{p,q}^{\alpha,loc}(\Omega)$ for any $p,q\in[1,\infty]$.
\end{rem}

\paragraph*{Taylor expansions}
For $\phi\in L^2(\Omega)$, write
\begin{equation}\label{phixa}
\red{\widetilde{\phi}(x)} :=\sva^{-2}\int_{S_\sva(x)}\phi(y)\d y, \qquad \phi_x^\sva  :=\sva^{15/8}\phi(x), \qquad \red{\widetilde{\phi}_x^\sva} :=\sva^{-1/8}\int_{S_\sva(x)}\phi(y)\d y
\end{equation}
for the smoothed version, the scaled version, the smoothed and scaled version, respectively.
\red{To prove convergence of finite dimensional distributions of the magnetisation field, we will consider its Fourier transform, which leads to 
polynomial chaos expansion of the random field partition functions as in \eqref{e:Prefactor} with $\xi_\cdot^\sva$ replaced by $\xi_\cdot^\sva+i\widetilde{\phi}_\cdot^\sva$. We then use a Lindeberg principle to replace $\tanh(\xi_\cdot^\sva+i\widetilde{\phi}_\cdot^\sva)$ with Gaussian random variables. The following estimates give the required control over the moments of these random variables.}
Using a Taylor expansion we have that $\tanh(x)=x-x^3/6+O(x^5)$ for $x$ small. Therefore using the exponential moments of $\omega$ and recalling that $\xi^\sva_x:=\lambda_x^\sva\omega^\sva_x+h_x^\sva$ and their definitions from \eqref{lambdaxa},
we have that
\begin{flalign}
\Eb[\Re(\tanh(\xi_x^\sva+i\widetilde{\phi}_x^\sva))] & = h_x^\sva+O(\sva^{21/8}), \qquad
\Eb[\Re(\tanh(\xi_x^\sva+i\widetilde{\phi}_x^\sva))^2]  = (\lambda_x^\sva)^2+O(\sva^{7/2}),  \notag\\
\Eb[\Im(\tanh(\xi_x^\sva+i\widetilde{\phi}_x^\sva))]  &= \widetilde{\phi}_x^\sva+O(\sva^{29/8}),   \qquad
\Eb[\Im(\tanh(\xi_x^\sva+i\widetilde{\phi}_x^\sva))^2]  = (\widetilde{\phi}_x^\sva)^2+O(\sva^{11/2}),   \label{e:Cor}\\
& \Eb[\Re(\tanh(\xi_x^\sva+i\widetilde{\phi}_x^\sva))\Im(\tanh(\xi_x^\sva+i\widetilde{\phi}_x^\sva))]
= h_x^\sva\widetilde{\phi}_x^\sva+O(\sva^{9/2}),  \notag
\end{flalign}
where $\Re(\cdot)$ and $\Im(\cdot)$ denote the real and imaginary parts respectively.

\section{Uniqueness of the limit}\label{s:FDD}

To prove Theorem \ref{t:JntCnv}, it suffices to show that the laws of $(W^{\omega,\sva},\widetilde{Z}_{\Omega;\lambda, h}^{\omega,\sva},\mu^{\omega,\sva}_{\Omega;\lambda,h})_{\sva\in(0,1]}$ are tight and the limit is unique. In this section, we will assume tightness and prove uniqueness of the limit.


\subsection{Convergence of the finite dimensional distributions}
Note that we can find a countable set of functions $\Lambda\subset C_c^\infty(\Omega)$ (including the function $0$) such that every $\mu\in \Mc_1(\Cc^\alpha_{loc}(\Omega))$ is uniquely determined by its characteristic functions
\begin{equation}\label{e:Qchar}
\hat \mu(\phi) := \int e^{i \langle \phi, \Phi\rangle} \mu({\rm d}\Phi), \qquad \phi \in \Lambda.
\end{equation}
Given tightness, to show that $(W^{\omega,\sva},\widetilde{Z}_{\Omega;\lambda, h}^{\omega,\sva},\mu^{\omega,\sva}_{\Omega;\lambda,h})$ converges to a unique limit as $a\downarrow 0$, it then suffices to show that $(W^{\omega,\sva},\widetilde{Z}_{\Omega;\lambda, h}^{\omega,\sva}, (\hat \mu^{\omega,\sva}_{\Omega;\lambda,h}(\phi))_{\phi\in \Lambda})$ converges to a unique limit, since the limit of $\hat \mu^{\omega,\sva}_{\Omega;\lambda,h}(\phi)$ must be the characteristic function of the limit of $\mu^{\omega,\sva}_{\Omega;\lambda,h}$ by the continuity of $\hat \mu(\phi)$ in $\mu$. To identify the limit of $(\hat \mu^{\omega,\sva}_{\Omega;\lambda,h}(\phi))_{\phi\in\Lambda}$, note that
\begin{align*}
\hat \mu^{\omega,\sva}_{\Omega;\lambda,h}(\phi) & =\Et^{\omega,\sva}_{\Omega;\lambda,h}\left [ \exp\left (i\left <\phi,\Phi^{\sva}_{\Omega}\right >\right )\right]\\
&=\frac{\Et_{\Omega}^{\sva}\left [\exp\left (\sum_{x\in\Omega_\sva}\left (\lambda_x^\sva\omega^\sva_x+h_x^\sva+i\widetilde{\phi}_x^\sva\right )\sigma_x\right )\right ]}{Z_{\Omega;\lambda, h}^{\omega,\sva}}
\;=\;\frac{\widetilde{Z}_{\Omega;\lambda, h+i\widetilde{\phi}}^{\omega,\sva}}{\widetilde{Z}_{\Omega;\lambda, h}^{\omega,\sva}},
\end{align*}
and convergence would follow if we show convergence of $(\widetilde{Z}_{\Omega;\lambda, h+i\widetilde{\phi}}^{\omega,\sva})_{\phi\in \Lambda}$ to a limit $(\Zc^W_{\Omega; \lambda, h+i\phi})_{\phi\in \Lambda}$. The limiting measure $\mu^W_{\Omega; \lambda, h}$ would then be uniquely determined by its characteristic functions
\begin{equation}\label{e:Qchar2}
\hat \mu^W_{\Omega; \lambda, h}(\phi) := \frac{\Zc^W_{\Omega; \lambda, h+ i\phi}}{\Zc^W_{\Omega; \lambda, h}}, \qquad \phi\in \Lambda,
\end{equation}
which is well-defined since $\Zc^W_{\Omega; \lambda, h}>0$ $\Pb$-a.s.\ by Lemma \ref{l:mom}.

Also note that $W$ is uniquely determined by $W_\varphi:=\langle W, \varphi\rangle $, $\varphi\in \Lambda$.
Therefore uniqueness of the limit for $(W^{\omega,\sva},\widetilde{Z}_{\Omega;\lambda, h}^{\omega,\sva},\mu^{\omega,\sva}_{\Omega;\lambda,h})$ would follow from the convergence in distribution of
\begin{equation}
\big( (W^{\omega,\sva}_\varphi)_{\varphi\in\Lambda}, (\widetilde{Z}_{\Omega;\lambda, h+i\widetilde{\phi}}^{\omega,\sva})_{\phi\in \Lambda}\big).
\end{equation}
This is the content of the following proposition.
\begin{prp}\label{p:JntCnv}
Let $\Lambda\subset C_c^\infty(\Omega)$ be as above, then $\big((W^{\omega,\sva}_\varphi)_{\varphi\in\Lambda}, (\widetilde{Z}_{\Omega;\lambda, h+i\widetilde{\phi}}^{\omega,\sva})_{\phi\in \Lambda}\big)$ converges in finite-dimensional distribution to $\big((W_\varphi)_{\varphi\in \Lambda}, (\Zc_{\Omega;\lambda, h+i\phi}^{W})_{\phi\in \Lambda}\big)$ as $\sva\downarrow 0$, where $\Zc_{\Omega;\lambda, h+i\phi}^{W}$ are defined as in \eqref{e:Par}. Furthermore, $\Pb$-a.s., $(\Zc_{\Omega;\lambda, h+i\phi}^{W})_{\phi\in \Lambda}$ are uniquely determined by $W$.
\end{prp}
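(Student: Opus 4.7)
The plan is to reduce the claim, by the Cram\'er--Wold device, to joint convergence in distribution of an arbitrary finite subcollection $(W^{\omega,\sva}_{\varphi_l})_{l\leq m}$ together with $(\widetilde Z^{\omega,\sva}_{\Omega;\lambda,h+i\widetilde\phi_j})_{j\leq k}$, and then apply a Lindeberg principle in the spirit of \cite{casuzy17}. Fix $\phi_1,\dots,\phi_k,\varphi_1,\dots,\varphi_m\in\Lambda$. Writing out the high-temperature expansion \eqref{e:Prefactor} with $h$ replaced by $h+i\widetilde\phi_j$ yields
\begin{equation*}
\widetilde Z^{\omega,\sva}_{\Omega;\lambda,h+i\widetilde\phi_j}=\theta_\sva\,\cosh(\xi^\sva_\cdot+i\widetilde\phi^\sva_{j,\cdot})^{\Omega_\sva}\cdot\Psi^\sva_j, \qquad \Psi^\sva_j:=\sum_{I\subseteq\Omega_\sva}\Et^\sva_\Omega\big[\sigma_\cdot^I\big]\tanh(\xi^\sva_\cdot+i\widetilde\phi^\sva_{j,\cdot})^I,
\end{equation*}
and a Taylor expansion of $\log\cosh$ combined with the moment bounds \eqref{e:Cor} shows that the prefactor converges to $1$ in probability (as in Lemma \ref{l:Theta}). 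The problem thus reduces to identifying the joint limit of $(W^{\omega,\sva}_{\varphi_l})_l$ and $(\Psi^\sva_j)_j$.

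I would then truncate each $\Psi^\sva_j$ at chaos level $L$. After expanding $\tanh$ as an odd power series, $\Psi^{\sva,\leq L}_j$ becomes a polynomial of bounded degree in the i.i.d.\ variables $(\omega^\sva_x)_{x\in\Omega_\sva}$, to which the Lindeberg swap of \cite[Sec.~4]{casuzy17} applies: replace each $\omega^\sva_x$ by an independent standard Gaussian $g_x$, coupled so that $\sva\sum_x g_x\ind_{S_\sva(x)}$ is the piecewise-constant white-noise approximation from \eqref{e:Wa}. The third-moment remainder of the swap is controlled using the spin-correlation bounds \eqref{e:fUpp}--\eqref{fOmbd}, while the $\sva^{7/8}$ scaling of $\lambda^\sva_x$ and the $O(\sva^{15/8})$ scaling of $\widetilde\phi^\sva_{j,x}$ make all contributions beyond the leading term in $\tanh$ negligible after summation over $\Omega_\sva$. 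After Gaussian replacement, $\Psi^{\sva,\leq L}_j$ becomes a discrete Riemann-sum approximation to the truncated Wiener chaos $\Zc^{W,\leq L}_{\Omega;\lambda,h+i\phi_j}$ of \eqref{e:Par}, and the $L^2$ convergence in \eqref{e:ChHoIz} together with the $L^2$ domination \eqref{fOmbd} give term-by-term convergence in $L^2(\Pb)$; joint convergence with $(W^{\omega,\sva}_{\varphi_l})_l$ is then automatic, since the latter are linear functionals of the same Gaussian surrogate.

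Finally, letting $L\to\infty$, the $L^2$-orthogonality of Wiener chaoses together with Lemma \ref{l:mom} (which provides uniform $L^2$ bounds on $\widetilde Z^{\omega,\sva}$, and hence on $\Psi^\sva_j$) allows me to exchange the limits in $\sva$ and $L$ and obtain the full Wiener chaos expansion $\Zc^W_{\Omega;\lambda,h+i\phi_j}$. Uniqueness then follows because this expansion is an $L^2(\Pb)$-convergent series in $W$, hence $\Pb$-a.s.\ $\sigma(W)$-measurable. I expect the main technical obstacle to be the complex-valued Lindeberg step: the moment matching must be carried out jointly for the real and imaginary parts of $\tanh(\xi^\sva_x+i\widetilde\phi^\sva_{j,x})$---precisely what the mixed covariance in \eqref{e:Cor} provides---and one must keep track of cross-terms between different indices $j$ to obtain the joint, rather than marginal, convergence of the family $(\Psi^\sva_j)_j$.
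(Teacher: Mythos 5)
Your proposal follows essentially the same route as the paper: high-temperature expansion with the prefactor shown to converge to $1$ in probability (Lemma \ref{l:Theta}), truncation of the polynomial chaos at a fixed level with uniform $L^2$ control of the tail (Lemma \ref{l:trunc}), linearisation of $\tanh(\xi^\sva_\cdot+i\widetilde\phi^\sva_\cdot)$ justified by the $\sva^{7/8}$ and $\sva^{15/8}$ scalings (Lemma \ref{l:lin}), a multivariate Lindeberg swap replacing $\omega^\sva_x$ by Gaussians coupled to the white noise, carried out jointly on real and imaginary parts (Lemmas \ref{l:terms} and \ref{l:Lind}), and finally $L^2$ convergence of the truncated Wiener chaos with an exchange of limits. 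The only cosmetic differences are your use of Cram\'er--Wold rather than testing against bounded smooth functions, and a slightly loose phrasing of the truncated expansion as a ``polynomial of bounded degree'' before the linearisation has actually been performed; neither affects the correctness of the argument.
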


\begin{proof}
\red{
The proof of the convergence of $(\widetilde{Z}_{\Omega;\lambda, h+i\widetilde{\phi}}^{\omega,\sva})_{\phi\in \Lambda}$ is based on chaos expansions as outlined in Section \ref{s:Pre}. For $\widetilde \phi=0$, this result was proved in \cite{casuzy17}. Here we adapt the proof to handle the case with a complex external field. It requires several approximations (Lemmas \ref{l:trunc}-\ref{l:terms}), the proof of which will be deferred to the next subsection. Along the way, we will also formulate a multivariate version of the Lindeberg principle (Lemma \ref{l:Lind}]) for polynomial chaos expansions.
}

Let $F, G$ be any finite subsets of $C_c^\infty(\Omega)$.
By \eqref{e:Prefactor} we have that
\begin{equation}\label{pre}
\widetilde{Z}_{\Omega;\lambda, h+i\widetilde{\phi}}^{\omega,\sva}=\theta_\sva\cosh(\xi_\cdot^\sva+i\widetilde{\phi}_\cdot^\sva)^{\Omega_\sva}\sum_{I\subseteq\Omega_\sva}\Et_{\Omega}^{\sva}[\sigma^I_\cdot]\tanh(\xi_\cdot^\sva+i\widetilde{\phi}_\cdot^\sva)^I
\end{equation}
where, by Lemma \ref{l:Theta} below, the prefactor $\theta_\sva\cosh(\xi_\cdot^\sva+i\widetilde{\phi}_\cdot^\sva)^{\Omega_\sva}$ converges to $1$ in probability. It remains to prove the joint convergence of $\left((W^{\omega,\sva}_\varphi)_{\varphi\in F}, (\Upsilon_\phi)_{\phi\in G}\right)$ where
\begin{equation}\label{Upsilon}
\Upsilon_\phi:=\sum_{I\subseteq\Omega_\sva}\psi^\sva(I)\left(\frac{\tanh(\xi_\cdot^\sva+i\widetilde{\phi}_\cdot^\sva)}{\Var(\xi_\cdot^\sva)^{1/2}}\right)^I, \qquad \mbox{and} \quad \psi^\sva(I)=\Var(\xi_\cdot^\sva)^{|I|/2}\Et_{\Omega}^{\sva}[\sigma^I_\cdot].
\end{equation}
Note that $\Var(\xi_\cdot^\sva)=(\lambda_\cdot^\sva)^2$. When $\phi=0$, the convergence of $\Upsilon_\phi$ to $\Zc_{\Omega;\lambda, h}^{W}$ was proved in \cite[Theorem 3.14]{casuzy17}
by verifying conditions in \cite[Theorem 2.3]{casuzy17}. We will adapt the proof to the case of complex external fields.

First we show that $\Upsilon_\phi$ can be truncated to order $l\in\Nb$ such that the error is uniform for small $\sva$ and can be made arbitrarily small by choosing $l$ large. More precisely, let $\Upsilon_\phi^{\leq l}$ denote the restriction of the sum  in \eqref{Upsilon} to $I$ with $|I|\leq l$. Then we have
\begin{lem}\label{l:trunc}
For $\phi\in C_c^\infty(\Omega)$,
\begin{align*}
\lim_{l\rightarrow \infty}\lim_{\sva\rightarrow 0^+}\Eb[|\Upsilon_{\phi}-\Upsilon_{\phi}^{\leq l}|^2] &=0.
\end{align*}
\end{lem}
Next, we linearise $\tanh(\xi_\cdot^\sva+i\widetilde{\phi}_\cdot^\sva)$ and approximate $\Upsilon_\phi^{\leq l}$  by
\begin{equation}
\Xi_\phi^{\le l}  :=\sum_{I\subseteq\Omega_\sva, |I|\leq l}\psi^\sva(I)\left(\frac{\xi_\cdot^\sva+i\widetilde{\phi}_\cdot^\sva}{\lambda_\cdot^\sva}\right)^I
\end{equation}
and show that
\begin{lem}\label{l:lin}
For $\phi\in C_c^\infty(\Omega)$,
\begin{align*}
\lim_{l\rightarrow \infty}\lim_{\sva\rightarrow 0^+}\Eb[|\Upsilon_{\phi}^{\leq l}-\Xi_{\phi}^{\leq l}|^2] &=0.
\end{align*}
\end{lem}
We then further approximate $\Xi_\phi^{\le l}$ by
\begin{equation}
\Theta_\phi^{\le l}  :=\sum_{I\subseteq\Omega_\sva, |I|\leq l} a^{-|I|} \psi^\sva(I)\left(\sva \vartheta_\cdot^\sva+\sva\cdot \frac{h_\cdot^\sva+i\widetilde{\phi}_\cdot^\sva}{\lambda_\cdot^\sva}\right)^I,
\end{equation}
where we have replaced $\omega^a_x$ in $\xi^a_x= \lambda^a_x \omega^a_x+h^a_x$ by the normal random variable $\vartheta_x^\sva:=\sva^{-1}\int_{S_\sva(x)}W(\d y)$ defined from the white noise $W$, and $S_\sva(x)$ is the box centred around $x$ with side length $\sva$. Note that $\Theta_\phi$ is a Wiener chaos expansion with respect to $W$, where by Lemma \ref{l:spin}, $\sva^{-n} \psi^\sva(x_1, \ldots, x_n)$ converges pointwise and in $L^2(\Omega^n)$ to the kernel $\Cc^n \phi^+_\Omega(x_1, \ldots, x_n)\prod_{i=1}^n \lambda(x_i)$ appearing in the definition of $\Zc^W_{\Omega; \lambda, h}$  in \eqref{e:Par}.
Using It\^o isometry and the assumption that $\lambda, h \in C^1_b(\Omega)$, $\inf \lambda>0$, and $\phi\in C_c^\infty(\Omega)$, it is then straightforward to check that $\big((W^{\sva}_\varphi)_{\varphi\in F}, (\Theta_\phi^{\leq l})_{\phi\in G}\big)$ converges in $L^2$ to $\big((W_\varphi)_{\varphi\in F}, (\Zc_{\Omega;\lambda,h+i\phi}^{W,\leq l})_{\phi\in G}\big)$ for any $l\in\Nb$, where given a chaos expansion $\Psi$, $\Psi^{\le l}$ denotes its truncation to terms of order at most $l$.


It was proved in \cite[Theorem 3.14]{casuzy17} that the chaos expansion for $\Zc_{\Omega;\lambda,h}^{W}$ converges in $L^2$. It is easily seen that the proof also applies to $\Zc_{\Omega;\lambda,h+i\phi}^{W}$ with $\phi\in C_c^\infty(\Omega)$ (we can dominate $h+i\phi$ by $|h+i\phi|$).  Therefore $\Zc_{\Omega;\lambda,h+i\phi}^{W,\leq l}$ converges in $L^2$ to $\Zc_{\Omega;\lambda,h+i\phi}^{W}$ as $l\to\infty$. To conclude that $\left((W^{\omega,\sva}_\varphi)_{\varphi\in F}, (\Upsilon_\phi)_{\phi\in G}\right)$ converges in distribution to $\big((W_\varphi)_{\varphi\in F}, (\Zc_{\Omega;\lambda,h+i\phi}^{W})_{\phi\in G}\big)$ as $\sva\downarrow 0$, we can just truncate the chaos expansions to an arbitrarily large order $l$ and then show that for any bounded $f$ with bounded first derivatives, we have
\begin{equation}\label{fcomp}
\lim_{\sva\downarrow 0}|\Eb[f((W^{\omega,\sva}_\varphi)_{\varphi\in F}, (\Xi^{\leq l}_\phi)_{\phi\in G})]-\Eb[f((W^{\sva}_\varphi)_{\varphi\in F}, (\Theta_\phi^{\leq l})_{\phi\in G})]|=0.
\end{equation}
Separating the real and imaginary parts and writing $\Psi_{\phi,\Re}^{\leq l}=\Re(\Psi_\phi^{\leq l})$ and $\Psi_{\phi,\Im}^{\leq l}=\Im(\Psi_\phi^{\leq l})$ for $\Psi\in\{\Upsilon, \Xi,\Theta\}$, it suffices to show that
\begin{lem}\label{l:terms}
For all $g$ bounded and differentiable with bounded first derivatives, we have
\begin{align*}
\lim_{\sva\downarrow 0} \big|\Eb[g((W^{\omega,\sva}_\varphi)_{\varphi\in F}, (\Xi^{\le l}_{\phi,\Re})_{\phi\in G},(\Xi^{\le l}_{\phi,\Im})_{\phi\in G})]-\Eb[g((W^{\sva}_\varphi)_{\varphi\in F}, (\Theta_{\phi,\Re}^{\leq l})_{\phi\in G},(\Theta_{\phi,\Im}^{\leq l})_{\phi\in G})] \big|
\end{align*}
is equal to $0$.
\end{lem}
We will prove Lemmas \ref{l:trunc}--\ref{l:terms} in the next subsection. Since $(\Zc_{\Omega;\lambda, h+i\phi}^{W})_{\phi\in \Lambda}$ is a countable family defined via Wiener chaos expansions with respect to the white noise $W$, they are almost surely determined by $W$. This concludes the proof of Proposition \ref{p:JntCnv}.
\end{proof}

\subsection{Proof of Lemmas \ref{l:trunc}--\ref{l:terms}}

Before starting the proof of Lemma \ref{l:trunc}, we first show that the prefactor in \eqref{pre} tends to $1$ in probability as $\sva\downarrow 0$. The proof is similar to that in the proof of Theorem 3.14 in \cite{casuzy17}.

\begin{lem}\label{l:Theta}
Let $\theta_\sva:=e^{-\frac{1}{2}\sva^{-1/4}\Vert \lambda\Vert _{L^2}^2}$. \red{Then uniformly in $\sigma\in \{\pm1\}^{\Omega_a}$,} we have that
\[
\theta_\sva\Eb\left[\exp\left(\sum_{x\in\Omega_\sva}\lambda_x^\sva\omega_x^\sva\sigma_x\right)\right] \longrightarrow 1 \quad \mbox{as } \sva\downarrow 0.
\]
Moreover, $\theta_\sva \prod_{x\in \Omega_\sva} \cosh(\lambda_x^\sva\omega_x^\sva+h_x^\sva+i\widetilde{\phi}_x^\sva)$ converges to $1$ in $\Pb$-probability for any $\phi\in C_c^\infty(\Omega)$; this convergence also holds in $L^p$ for any $p>0$ when $\phi\equiv 0$.
\end{lem}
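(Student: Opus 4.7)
The plan is to establish the three parts via Taylor expansions of the log-MGF of $\omega$ and of $\log\cosh$ together with tail bounds from the finite exponential moment hypothesis. For the first claim I would factorise by independence,
\begin{equation*}
\theta_\sva \Eb\Big[\exp\Big(\sum_{x\in\Omega_\sva}\lambda_x^\sva \omega_x^\sva\sigma_x\Big)\Big] = \theta_\sva\prod_{x\in\Omega_\sva}\Eb[e^{\lambda_x^\sva \sigma_x \omega_x^\sva}],
\end{equation*}
and use $\log\Eb[e^{t\omega}]=t^2/2+O(t^3)$, which holds for small $|t|$ thanks to $\Eb[\omega]=0$, $\Var(\omega)=1$ and finite exponential moments. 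Plugging in $t=\lambda_x^\sva\sigma_x$, the quadratic sum $\sum_x(\lambda_x^\sva)^2/2$ is a Riemann approximation to $\tfrac12\sva^{-1/4}\|\lambda\|_{L^2}^2=-\log\theta_\sva$ with $O(\sva^{3/4})$ error (using $\lambda\in C^1$), while the cubic remainder is $O(\sva^{-2+21/8})=O(\sva^{5/8})\to 0$; both bounds are uniform in $\sigma$ because $t^2$ does not depend on the signs.

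For the second claim I would take the principal branch of $\log\cosh$ on the event $E_\sva:=\{\max_{x\in\Omega_\sva}|\omega_x^\sva|\leq K\log\sva^{-1}\}$, which satisfies $\Pb(E_\sva^c)\to 0$ by a union bound and the exponential moment assumption. Setting $t_x:=\lambda_x^\sva\omega_x^\sva+h_x^\sva+i\widetilde\phi_x^\sva$, on $E_\sva$ we have $|t_x|=O(\sva^{7/8}\log\sva^{-1})\to 0$ uniformly in $x$, so the expansion $\log\cosh(t_x)=t_x^2/2+O(t_x^4)$ is unambiguous. Expanding $t_x^2$ term by term: the diagonal $\sum_x(\lambda_x^\sva\omega_x^\sva)^2/2$ has mean $\tfrac12\sva^{-1/4}\|\lambda\|_{L^2}^2+o(1)$ and variance $O(\sva^{3/2})$ (from $\Eb[\omega^4]<\infty$); the cross term $\sum_x\lambda_x^\sva\omega_x^\sva(h_x^\sva+i\widetilde\phi_x^\sva)$ is centred with variance $O(\sva^{7/2})$; the deterministic $\sum_x(h_x^\sva+i\widetilde\phi_x^\sva)^2$ is $O(\sva^{7/4})$; and the quartic remainder is $O(\sva^{3/2}(\log\sva^{-1})^4)$ deterministically on $E_\sva$. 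Summed, these cancel $\log\theta_\sva$ and leave $o_\Pb(1)$.

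For the third claim I would compute the $p$-th moment directly and then combine with the convergence in probability just established. By independence,
\begin{equation*}
\Eb\Big[\Big(\theta_\sva\prod_{x\in\Omega_\sva}\cosh(\lambda_x^\sva\omega_x^\sva+h_x^\sva)\Big)^p\Big]=\theta_\sva^p\prod_{x\in\Omega_\sva}\Eb[\cosh(\lambda_x^\sva\omega_x^\sva+h_x^\sva)^p].
\end{equation*}
I would split each factor at $|\omega_x^\sva|\leq M_\sva:=K\log\sva^{-1}$: on the bulk, $y:=\lambda_x^\sva\omega_x^\sva+h_x^\sva$ is small and $\cosh(y)^p=1+py^2/2+O(y^4)$; on the tail use $\cosh(y)^p\leq e^{p|y|}$ with Cauchy--Schwarz and $\Pb(|\omega|>M_\sva)^{1/2}=O(\sva^{cK/2})$ (with $c$ chosen so that $\Eb[e^{c|\omega|}]<\infty$). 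This gives
\begin{equation*}
\Eb[\cosh(y)^p]=1+\tfrac{p}{2}(\lambda_x^\sva)^2+O\big((h_x^\sva)^2+(\lambda_x^\sva)^4+(\lambda_x^\sva M_\sva)^4+\sva^{cK/2}\big),
\end{equation*}
so that choosing $K>4/c$ makes the summed error $o(1)$, leaving $\sum_x\log\Eb[\cosh(y)^p]=\tfrac{p}{2}\sva^{-1/4}\|\lambda\|_{L^2}^2+o(1)$; this cancels $\log\theta_\sva^p$, proving convergence of the $p$-th moment to $1$. Convergence in probability plus convergence of $p$-th moments to $\Eb[1^p]=1$ upgrades the convergence to $L^p$.

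The main obstacle is the third claim: the naive convexity bound $\cosh(y)^p\leq\cosh(py)$ (for $p\geq 1$) would only yield $\Eb[(\theta_\sva\prod\cosh)^p]\leq\exp(\tfrac12(p^2-p)\sva^{-1/4}\|\lambda\|_{L^2}^2+o(1))$, which blows up for any $p>1$. One must instead Taylor-expand each $\cosh(y)^p$ around $y=0$ to pick up the correct leading coefficient $1+py^2/2$, and the truncation of $\omega_x^\sva$ at $M_\sva$ is essential because $\omega^2$ is not assumed to have an exponential moment, ruling out the otherwise appealing global bound $\cosh(y)^p\leq e^{py^2/2}$.
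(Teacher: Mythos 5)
Your proposal is correct and follows the same overall strategy as the paper's proof: a log--moment-generating-function expansion for the first claim (quadratic term = Riemann sum for $-\log\theta_\sva$, cubic remainder $O(\sva^{5/8})$ after summing, both uniform in $\sigma$), and a Taylor expansion of $\log\cosh$ followed by a weak law of large numbers (Chebyshev on the centred sum, whose variance is $O(\sva^{3/2})$) for the second claim; your extra truncation event $E_\sva$ there is harmless but not needed, since $\cos(\widetilde\phi^\sva_x)>0$ already keeps $\cosh(\xi^\sva_x+i\widetilde\phi^\sva_x)$ away from the branch cut. The only genuine divergence is in the $L^p$ statement, for which the paper omits details: it bounds the \emph{integer} moments by writing $(\cosh y)^k=2^{-k}\sum_{j=0}^k\binom{k}{j}e^{(k-2j)y}$ and evaluating each $\Eb[e^{(k-2j)\lambda^\sva_x\omega^\sva_x}]$ exactly from the moment generating function; since $2^{-k}\sum_j\binom{k}{j}(k-2j)^2=k$, this gives $\Eb[(\cosh y)^k]=1+\tfrac k2(\lambda_x^\sva)^2+O(\sva^{21/8})$ with no truncation, and boundedness of the $k$-th moment for an integer $k>p$ already yields uniform integrability of the $p$-th power and hence $L^p$ convergence. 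Your route --- Taylor-expanding $(\cosh y)^p$ for arbitrary real $p>0$ after truncating $\omega$ at $K\log\sva^{-1}$ and then upgrading via Scheff\'e/Vitali --- is also valid and gives the slightly stronger conclusion that the $p$-th moment itself converges to $1$, at the cost of the truncation bookkeeping; your observation that $\cosh(y)^p\le\cosh(py)$ is useless here is exactly right, but the truncation is only essential for your real-$p$ computation, not for the lemma, since the integer-moment route sidesteps it entirely.
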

\begin{proof}
Recall that $\lambda_x^\sva :=\sva^{7/8}\lambda(x)$ and $h_x^\sva :=\sva^{15/8}h(x)$. \red{Note that
\begin{align*}
\log \Eb\left[\exp\left(\sum_{x\in\Omega_\sva}\lambda_x^\sva\omega_x^\sva\sigma_x\right)\right] = \sum_{x\in\Omega_\sva} \log \Eb\big[e^{\lambda_x^\sva\omega_x^\sva\sigma_x}\big] = \sum_{x\in\Omega_\sva}\Big( \frac{(\lambda^\sva_x)^2}{2} + R(\lambda_x^\sva\omega_x^\sva\sigma_x)\Big),
\end{align*}
where we applied Taylor expansion to the log-moment generating function of $\omega^\sva_x$, with error term
$$
|R(\lambda_x^\sva\omega_x^\sva\sigma_x)| \leq C \Vert \lambda\Vert_\infty \sva^{21/8}.
$$
Therefore
\begin{align*}
&\Bigg|\log \theta_a + \log \Eb\left[\exp\left(\sum_{x\in\Omega_\sva}\lambda_x^\sva\omega_x^\sva\sigma_x\right)\right]\Bigg|\\
\leq\, & \Big| -\frac{1}{2}\sva^{-1/4}\Vert \lambda\Vert _{L^2}^2 + \frac{1}{2} \sum_{x\in\Omega_\sva} (\lambda^\sva_x)^2 \Big| + C\Vert \lambda\Vert_\infty  \sum_{x\in\Omega_\sva} \sva^{21/8}.
\end{align*}
Since $\lambda\in C^1(\Omega)$, the first difference is of order $O(\sva^{7/4})$ and tends to $0$ as $\sva \downarrow 0$ by a Riemann sum approximation with control on the error. The second term also tends to 0 uniformly in $\sigma$, which completes the proof of the first statement.
}

Recall that $\xi_x^\sva=\lambda_x^\sva\omega_x^\sva+h_x^\sva$ and note that by a Taylor expansion, we have that
\[\Eb[\log(\cosh(\xi_x^\sva+i\widetilde{\phi}_x^\sva))]=\frac{(\lambda_x^\sva)^2}{2}+O((h_x^\sva)^2+(\widetilde{\phi}_x^\sva)^2+(\lambda_x^\sva)^4)=\frac{(\lambda_x^\sva)^2}{2}+O(\sva^{7/2})\]
where the error term $O(\sva^{7/2})$ is uniform over $x$ by continuity of $\lambda$, $h$ and $\phi$. \red{In particular,
\[
\sum_{x\in\Omega_\sva}\Eb[\log(\cosh(\xi_x^\sva+i\widetilde{\phi}_x^\sva))] = \sum_{x\in\Omega_\sva} \frac{(\lambda_x^\sva)^2}{2} + O(\sva^{3/2})
= -\log \theta_\sva +O(\sva^{3/2}).
\]
Therefore
\begin{align}\label{e:PreFac}
&\theta_\sva \!\!\!\prod_{x\in \Omega_\sva}\!\!\!\cosh(\xi_x^\sva+i\widetilde{\phi}_x^\sva)\\
=\, &e^{O(\sva^{3/2})} \exp\left(\sum_{x\in\Omega_\sva}\log(\cosh(\xi_x^\sva+i\widetilde{\phi}_x^\sva))-\Eb[\log(\cosh(\xi_x^\sva+i\widetilde{\phi}_x^\sva))]\right).\notag
\end{align}
}
The sum is over $|\Omega_\sva|$ independent centred random variables, each with variance at most
\[\Eb[\log(\cosh(\xi_x^\sva+i\widetilde{\phi}_x^\sva))^2]=O((\lambda_x^\sva)^4)=O(\sva^{7/2}).\]
Therefore \eqref{e:PreFac} converges to 1 in probability by a weak law of large numbers.

Lastly, to show that $\theta_\sva \prod_{x\in \Omega_\sva} \cosh(\lambda_x^\sva\omega_x^\sva+h_x^\sva)$ converges to $1$ in $L^p$ for any $p>0$, it suffices to show that for any $k\in\Nb$, $\Eb[\theta_a^k \prod_{x\in \Omega_\sva} \cosh(\lambda_x^\sva\omega_x^\sva+h_x^\sva)^k]$ is bounded as $\sva\downarrow 0$. This is straightforward to verify by expanding
$(\cosh y)^k = (\frac{e^y +e^{-y}}{2})^k$ and applying Taylor expansion to the exponential moment generating function of $\omega^\sva_x$. We omit the details. We remark that the $L^p$ convergence statement should also hold for $\phi\neq 0$, but the details will be more tedious and is not needed so we left it out.
\end{proof}

\begin{proof}[Proof of Lemma \ref{l:trunc}]
The proof follows by using a Taylor expansion of $\tanh(x)$ to bound moments of $\tanh(\xi_x+i\widetilde{\phi}_\sva)$ and $L^2$ estimates of the kernel $\psi^\sva$. This is similar to \cite[Theorem 2.8]{casuzy17}.
We will bound the difference of the real parts $|\Upsilon_{\phi, \Re}-\Upsilon_{\phi, \Re}^{\leq l}|$. The imaginary part follows similarly.

Let
$$
\Rc_x  :=\Re(\tanh(\xi_x^\sva+i\widetilde{\phi}_x^\sva))/\lambda_x^\sva \qquad \mbox{and} \qquad \Ic_x  :=\Im(\tanh(\xi_x^\sva+i\widetilde{\phi}_x^\sva))/\lambda_x^\sva
$$
and $\widetilde{\Rc}_x :=\Rc_x-\Eb[\Rc_x]$, $\widetilde{\Ic}_x :=\Ic_x-\Eb[\Ic_x]$, so that
$$
\frac{\tanh(\xi_x^\sva+i\widetilde{\phi}_x^\sva))}{\lambda_x^\sva} = \widetilde \Rc_x+ \Eb[\Rc_x] + i\widetilde\Ic_x + i\Eb[\Ic_x].
$$
Let $\psi^\sva_{>l}(I)=\psi^\sva(I)\ind_{|I|>l}$. Expanding \eqref{Upsilon}, we note that
\begin{align}
\Upsilon_\Re^{>l}  := \Upsilon_{\phi, \Re}-\Upsilon_{\phi, \Re}^{\leq l}
& = \!\!\!\!\!\!\!\!\!\!\!\!\!\!\!\!\!\!\!\!  \sum_{\genfrac{}{}{0pt}{3}{K_1, K_2, K_3, K_4\subset \Omega_\sva \mbox{ \footnotesize \red{all disjoint}} }{ |K_2|+|K_4| \mbox{ \footnotesize even}}}
\!\!\!\!\!\!\!\!\!\!\!\!\!\!\!\!\!\!\!\!\!\!\!\!\! (-1)^{(|K_2|+|K_4|)/2}
(\widetilde{\Rc}_\cdot)^{K_1} (\widetilde{\Ic}_\cdot)^{K_2} \Eb[\Rc_\cdot]^{K_3}   \Eb[\Ic_\cdot]^{K_4} \psi^\sva_{>l}(\cup_i K_i) \notag\\
& = \sum_{K_1, K_2 \subset \Omega_\sva \mbox{ \footnotesize disjoint}} (\widetilde{\Rc}_\cdot)^{K_1} (\widetilde{\Ic}_\cdot)^{K_2}
\psi^\sva_{>l}(K_1,K_2), \label{Up>l}
\end{align}
where
\begin{align} \label{psipsi}
\psi^\sva_{>l}(K_1,K_2)
&:= \!\!\!\!\!\!\!\!\!\!\!\!\!  \sum_{\genfrac{}{}{0pt}{3}{K_3, K_4 \subset \Omega_\sva: K_1, K_2, K_3, K_4\mbox{\footnotesize \ \red{all disjoint}} }{ |K_2|+|K_4| \mbox{\footnotesize \  even}}}
\!\!\!\!\!\!\!\!\!\! (-1)^{(|K_2|+|K_4|)/2} \Eb[\Rc_\cdot]^{K_3}\Eb[\Ic_\cdot]^{K_4}  \psi^\sva_{>l}(\cup_i K_i)\notag  \\
&\leq  \!\!\!\!\!\!\!\!\!\!\!\!\!\!  \sum_{\genfrac{}{}{0pt}{3}{K_3 \cap K_4 =\emptyset }{ (K_3\cup K_4)\cap (K_1\cup K_2)=\emptyset}} \!\!\!\!\!\!\!\!\!\!\!\!\!  \psi^\sva_{>l}(\cup_i K_i) (C\sva)^{|K_3\cup K_4|},
\end{align}
and we used the fact that by \eqref{e:Cor},  $|\Eb[\Re_x]|,|\Eb[\Ic_x]|$ are uniformly bounded by $C\sva$ for some $C>0$. Note that the above bound depends only on $K_1\cup K_2$.
Using that $\Eb[(\widetilde{\Rc}_\cdot)^{K_1} (\widetilde{\Ic}_\cdot)^{K_2}(\widetilde{\Rc}_\cdot)^{K'_1} (\widetilde{\Ic}_\cdot)^{K'_2}]=0$ when $K_1\cup K_2 \neq K_1'\cup K_2'$, we can compute
\begin{equation}\label{Up2}
\Eb[(\Upsilon_\Re^{>l})^2] = \sum_{U\subset \Omega_a} \sum_{\genfrac{}{}{0pt}{3}{K_1\cap K_2=K'_1\cap K'_2=\emptyset}{K_1\cup K_2=K_1'\cup K_2'=U}} \!\!\!\!\!\!
\psi^\sva_{>l}(K_1,K_2) \psi^\sva_{>l}(K'_1,K'_2) \Eb[(\widetilde{\Rc}_\cdot)^{K_1} (\widetilde{\Ic}_\cdot)^{K_2}(\widetilde{\Rc}_\cdot)^{K'_1} (\widetilde{\Ic}_\cdot)^{K'_2}].
\end{equation}
The expectation can be bounded using Cauchy-Schwarz and independence as follows:
\begin{equation}\label{Ebound}
\begin{aligned}
\Eb[(\widetilde{\Rc}_\cdot)^{K_1} (\widetilde{\Ic}_\cdot)^{K_2}(\widetilde{\Rc}_\cdot)^{K'_1} (\widetilde{\Ic}_\cdot)^{K'_2}]
& \leq \Eb[(\widetilde{\Rc}^2_\cdot)^{K_1} (\widetilde{\Ic}^2_\cdot)^{K_2}]^{1/2} \Eb[(\widetilde{\Rc^2}_\cdot)^{K'_1} (\widetilde{\Ic^2}_\cdot)^{K'_2}]^{1/2} \\
& = (\Eb[\widetilde{\Rc}^2_\cdot]^{K_1} \Eb[\widetilde{\Rc}^2_\cdot]^{K'_1} \Eb[\widetilde{\Ic}^2_\cdot]^{K_2} \Eb[\widetilde{\Ic}^2_\cdot]^{K_2'})^{1/2} \\
&  \leq (1+C \sva^{7/4})^{|K_1|+|K'_1|} (C a)^{|K_2|+|K'_2|},
\end{aligned}
\end{equation}
where we used that by \eqref{e:Cor},  $\Eb[\widetilde \Rc_x^2]\leq 1+C a^{7/4}$ and $\Eb[\widetilde \Ic_x^2]\leq Ca^2$ for some $C>0$ uniformly in $x\in \Omega_a$. On the other hand, denoting $U:=K_1\cup K_2=K_1'\cup K_2'$, by \eqref{psipsi} we have
\begin{equation}
\begin{aligned}
\psi^\sva_{>l}(K_1,K_2) \psi^\sva_{>l}(K'_1,K'_2) & \leq  \Big(\!\!\!\! \sum_{\genfrac{}{}{0pt}{3}{K_3 \cap K_4 =\emptyset }{(K_3\cup K_4)\cap U=\emptyset}} \!\!\!\!\!\!\! \psi^\sva_{>l}(\cup_i K_i) (Ca)^{|K_3\cup K_4|}\Big)^2  \\
& \leq  \!\!\!\! \sum_{\genfrac{}{}{0pt}{3}{K_3 \cap K_4 =\emptyset }{ (K_3\cup K_4)\cap U=\emptyset}} \!\!\!\!\!\!\! \varepsilon^{|K_3\cup K_4|} \psi^\sva_{>l}(\cup_i K_i)^2  \!\!\!\! \sum_{\genfrac{}{}{0pt}{3}{K_3 \cap K_4 =\emptyset }{ (K_3\cup K_4)\cap U=\emptyset}} \!\!\!\!\!\!\!  (\varepsilon^{-1}C^2a^2)^{|K_3\cup K_4|} \\
& = \sum_{V\subset \Omega_a \backslash U} (2\varepsilon)^{|V|} \psi^\sva_{>l}(U \cup V)^2 \sum_{V\subset \Omega_a \backslash U}
(2\varepsilon^{-1}C^2a^2)^{|V|} \\
& = C_\varepsilon \sum_{V\subset \Omega_a \backslash U} (2\varepsilon)^{|V|} \psi^\sva_{>l}(U \cup V)^2
\end{aligned}
\end{equation}
where $C_\varepsilon$ is a uniform bound on $(1+ 2\varepsilon^{-1}C^2a^2)^{|\Omega_a|}$ for $\sva$ small. Substituting this bound together with \eqref{Ebound} into \eqref{Up2} then gives
\begin{align*}
&\Eb[(\Upsilon_\Re^{>l})^2] \\
 \leq\, & \sum_{U\subset \Omega_a} \sum_{\genfrac{}{}{0pt}{3}{K_1\cap K_2=K'_1\cap K'_2=\emptyset }{ K_1\cup K_2=K_1'\cup K_2'=U}} \!\!\!\!\!\! C_\varepsilon (1+C \sva^{7/4})^{|K_1|+|K'_1|} (C a)^{|K_2|+|K'_2|} \sum_{V\subset \Omega_a \backslash U} (2\varepsilon)^{|V|} \psi^\sva_{>l}(U \cup V)^2 \\
\leq\, & C_\varepsilon \sum_{U\cap V=\emptyset}  (1+Ca)^{|U|} (2\varepsilon)^{|V|}  \psi^\sva_{>l}(U \cup V)^2 \\
=\, & C_\varepsilon \sum_{I\subset \Omega_a}  (1+\eta)^{|I|} \psi^\sva_{>l}(I)^2,
\end{align*}
where $\eta:=Ca+2\varepsilon$ can be made arbitrarily small by choosing $\varepsilon$ small and let $\sva\downarrow 0$. This was shown to converge to $0$ as $\sva\rightarrow 0$ then $l\rightarrow \infty$ in the proof of Theorem 3.14 in \cite[Section 8]{casuzy17}.
\end{proof}

\begin{proof}[Proof of Lemma \ref{l:lin}]
This follows from the moment bounds on $\tanh(\xi_x^\sva+i\widetilde{\phi}_x^\sva)$ used in Lemma \ref{l:trunc} and a suitable representation of the kernel $\psi^\sva$ on sets of size at most $l$. This is similar to \cite[Theorem 2.8]{casuzy17}.
We bound the difference of the real parts $\Upsilon_{\phi, \Re}^{\leq l}-\Xi_{\phi, \Re}^{\leq l}$. The imaginary part follows similarly.

Let $\bar{\Rc}_x:=\xi_x^\sva/\lambda_x^\sva$, $\bar{\Ic}_x:=\widetilde{\phi}_x^\sva/\lambda_x^\sva$, $\widehat{\Rc}_x:=\bar{\Rc}_x-\Eb[\bar{\Rc}_x]$, $\widehat{\Ic}_x:=\bar{\Ic}_x-\Eb[\bar{\Ic}_x]$. Let $\widehat\psi^\sva_{\le l}(K_1,K_2)$ be defined the same as $\psi^\sva_{>l}(K_1,K_2)$ in \eqref{psipsi}, except that $\Eb[\Rc_x]$ and $\Eb[\Ic_x]$ are replaced by $\Eb[\bar{\Rc}_x]$ and $\Eb[\bar\Ic_x]$ respectively, and $\psi_{>l}^\sva(I)$ is replaced by $\psi_{\le l}^\sva(I):=\psi^\sva(I)1_{|I|\le l}$. By the same decomposition as in \eqref{Up>l}, we have
\[
\Xi^{\leq l}_{\phi, \Re} =  \sum_{K_1, K_2 \subset \Omega_\sva \mbox{ \footnotesize disjoint}} (\widehat{\Rc}_\cdot)^{K_1} (\widehat{\Ic}_\cdot)^{K_2}  \widehat \psi^\sva_{\le l}(K_1,K_2).
\]
Therefore
\begin{align}
\frac{1}{2}\Eb[|\Upsilon_{\phi, \Re}^{\leq l}-\Xi_{\phi, \Re}^{\leq l}|^2]
& \leq \Eb\Big[\Big|\sum_{K_1, K_2\subset \Omega_\sva} \big((\widetilde{\Rc}_\cdot)^{K_1}(\widetilde{\Ic}_\cdot)^{K_2}-(\widehat{\Rc}_\cdot)^{K_1}(\widehat{\Ic}_\cdot)^{K_2}\big)\psi^\sva_{\leq l}(K_1,K_2)\Big|^2\Big] \notag\\
& \quad +\Eb\Big[\Big|\sum_{K_1, K_2\subset \Omega_\sva} (\widehat{\Rc}_\cdot)^{K_1}(\widehat{\Ic}_\cdot)^{K_2}(\psi^\sva_{\leq l}(K_1,K_2)-\widehat\psi^\sva_{\leq l}(K_1,K_2))\Big|^2\Big]. \label{e:Spl}
\end{align}
The first term can be bounded in the same way as in the proof of Lemma \ref{l:trunc}. By Taylor expanding $\tanh(\xi^a_x+ i \widetilde{\phi}^a_x)$, we note that, uniformly in $K_1, K_2\subset \Omega_a$ with $|K_1|+|K_2|\le l$, we have
$$
\Eb\Big[\Big(\big(\widetilde{\Rc}_\cdot)^{K_1}(\widetilde{\Ic}_\cdot)^{K_2}-(\widehat{\Rc}_\cdot)^{K_1}(\widehat{\Ic}_\cdot)^{K_2}\Big)^2\Big] = \red{o(1)} \Eb\big[\widetilde{\Rc}_\cdot^2]^{K_1}\Eb[\widetilde{\Ic}^2_\cdot]^{K_2} \qquad \mbox{as } a\downarrow 0.
$$
It is then easily seen that the first term in \eqref{e:Spl} tends to $0$.

The second term in \eqref{e:Spl} can be bounded similarly, using that when we compare the definitions of $\psi^\sva_{\leq l}$ with that of $\widehat \psi^\sva_{\leq l}$ as in \eqref{psipsi}, we note that uniformly in $K_3, K_4\subset \Omega_a$ with $|K_3|+|K_4|\le l$, we have
$$
\Big|\Eb[\Rc_\cdot]^{K_3}\Eb[\Ic_\cdot]^{K_4}- \Eb[\bar{\Rc}_\cdot]^{K_3}\Eb[\bar{\Ic}_\cdot]^{K_4} \Big| = \red{o(1)} \big|\Eb[\Rc_\cdot]^{K_3}\Eb[\Ic_\cdot]^{K_4} \big|  \qquad \mbox{as } a\downarrow 0,
$$
from which it follows that the second term of \eqref{e:Spl} also tends to $0$.
\end{proof}

\begin{proof}[Proof of Lemma \ref{l:terms}] Note that $(W^{\omega,\sva}_\varphi)_{\varphi\in F}, (\Xi^{\le l}_{\phi,\Re})_{\phi\in G}, (\Xi^{\le l}_{\phi,\Im})_{\phi\in G}$ are polynomial chaos expansions with respect to the random variables $(\omega^\sva_x)_{x\in \Omega_\sva}$, with kernels
\begin{equation}\label{kernels}
\begin{aligned}
\psi^\sva_\varphi(I)
& = \begin{cases} \sva\varphi(x) & \text{if } I=\{x\}, \\ 0 & \text{otherwise,} \end{cases} \\
\psi^\sva_{\phi,\Re}(I)
& = \sum_{\genfrac{}{}{0pt}{3}{K_1, K_1 \subset \Omega_\sva \backslash I }{K_1\cap K_2=\emptyset, |K_2|\in 2\Zb}}
\psi_{\leq l}^\sva(I\cup K_1\cup K_2) \Big(\frac{h^\sva_\cdot}{\lambda^\sva_\cdot}\Big)^{K_1} \Big(\frac{\widetilde{\phi}^\sva_\cdot}{\lambda^\sva_\cdot}\Big)^{K_2}(-1)^{|K_2|/2}, \\
\psi^\sva_{\phi,\Im}(I)
& = \sum_{\genfrac{}{}{0pt}{3}{K_1, K_1 \subset \Omega_\sva \backslash I }{ K_1\cap K_2=\emptyset, |K_2|+1\in 2\Zb}}\psi_{\leq l}^\sva(I\cup K_1\cup K_2) \Big(\frac{h^\sva_\cdot}{\lambda^\sva_\cdot}\Big)^{K_1}\Big(\frac{\widetilde{\phi}^\sva_\cdot}{\lambda^\sva_\cdot}\Big)^{K_2}(-1)^{(|K_2|-1)/2},
\end{aligned}
\end{equation}
while $(W^{\sva}_\varphi)_{\varphi\in F}, (\Theta_{\phi,\Re}^{\leq l})_{\phi\in G}, (\Theta_{\phi,\Im}^{\leq l})_{\phi\in G}$ are polynomial chaos expansions with respect to the Gaussian random variables $(\vartheta_x^\sva)_{a\in \Omega_\sva}$ with the same kernel. Lemma \ref{l:terms} then follows from a  Lindeberg principle for polynomial chaos expansions. When only one polynomial chaos expansion is under consideration, such a Lindeberg principle was proved in \cite[Theorems 2.6]{casuzy17}. We need here a multivariate version, which we formulate as follows.

\begin{lem}\label{l:Lind}
Let $\Tb$ be a finite index set, and let $(\omega_x)_{x\in \Tb}$ and $(\vartheta_x)_{x\in \Tb}$ be two different families of i.i.d.\ random variables with zero mean, unit variance, and finite third absolute moments. Let $(\psi_i)_{1\leq i\leq n}$ be a family of kernels for polynomial chaos expansions of order at most $l\in\Nb$, and let $\Psi(\omega):=(\Psi_i(\omega))_{1\leq i\leq n}$ and $\Psi(\vartheta):=(\Psi_i(\vartheta))_{1\leq i\leq n}$ be the corresponding polynomial chaos expansions with respect to $\omega_\cdot$ and $\vartheta_\cdot$, with kernel $\psi_i$. For all $g: \Rb^n \to \Rb$ three times differentiable with $\sup_{|\alpha|\leq 3}\Vert D^\alpha g\Vert _{\infty}<\infty$, there exists a constant $C_{g, l, n}$ such that
\begin{equation}\label{Lind}
|\Eb[g(\Psi(\omega))]-\Eb[g(\Psi(\vartheta))]|
 \leq C_{g,l,n} M^l \sum_{i=1}^{n}  {\rm {\mathbb V}ar}(\Psi_i) \big( \max_{x\in \Tb} {\rm Inf}_x[\psi_i]\big)^{1/2},
\end{equation}
where $M=\max\{\Eb[|\omega_x|^3], \Eb[|\vartheta_x|^3]\}$ and ${\rm Inf}_x[\psi_i] = \sum_{I\ni x} \psi_i(I)^2$.
\end{lem}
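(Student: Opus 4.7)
The plan is to adapt the Lindeberg-type swap argument used for the univariate case in \cite[Theorem 2.6]{casuzy17} to the vector-valued setting, swapping the $\omega_x$ for $\vartheta_x$ one coordinate at a time and controlling each step by a third-order Taylor expansion of $g$. First, fix an enumeration $\Tb = \{x_1, \ldots, x_N\}$ and define hybrid vectors $Z^{(k)}$ by $Z^{(k)}_{x_j} = \vartheta_{x_j}$ for $j \leq k$ and $Z^{(k)}_{x_j} = \omega_{x_j}$ for $j > k$. Telescoping writes the left-hand side of \eqref{Lind} as $\sum_{k=1}^N \Delta_k$ with $\Delta_k := \Eb[g(\Psi(Z^{(k-1)})) - g(\Psi(Z^{(k)}))]$. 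Letting $\tilde Z^{(k)}$ denote $Z^{(k)}$ with its $x_k$-entry zeroed out, the multilinearity of $\Psi_i$ decomposes $\Psi_i(Z^{(k-1)}) = A_i^{(k)} + \omega_{x_k} B_i^{(k)}$ and $\Psi_i(Z^{(k)}) = A_i^{(k)} + \vartheta_{x_k} B_i^{(k)}$, where $A^{(k)}$ and $B^{(k)}$ are $\tilde Z^{(k)}$-measurable (hence independent of $\omega_{x_k}$ and $\vartheta_{x_k}$) and orthogonality of monomials gives $\Eb[(B_i^{(k)})^2] = {\rm Inf}_{x_k}[\psi_i]$.

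Next, I would Taylor expand the scalar map $u \mapsto g(A^{(k)} + u B^{(k)})$ at $u = 0$ to third order. Since $\omega_{x_k}$ and $\vartheta_{x_k}$ share zero mean and unit variance, the terms of order $0$, $1$, $2$ match in expectation and drop out of $\Delta_k$, leaving
$$|\Delta_k| \leq \frac{M}{3} \sup_{|\alpha|\leq 3} \|D^\alpha g\|_\infty \cdot \Eb\Big[\Big(\sum_{i=1}^n |B_i^{(k)}|\Big)^3\Big].$$
Each $B_i^{(k)}$ is itself a polynomial chaos expansion of degree at most $l-1$ in independent centred unit-variance variables with bounded third absolute moment, so a hypercontractive moment estimate for such polynomials (the same input used in the univariate proof of \cite[Theorem 2.6]{casuzy17}) yields $\Eb[|B_i^{(k)}|^3] \leq C_l M^{l-1}\, {\rm Inf}_{x_k}[\psi_i]^{3/2}$. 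Combined with Minkowski's inequality in $L^3$ and $(\sum_i a_i)^3 \leq n^2 \sum_i a_i^3$, this gives $|\Delta_k| \leq C'_{g,l,n}\, M^l\, \sum_{i=1}^n {\rm Inf}_{x_k}[\psi_i]^{3/2}$.

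To sum over $k$, I would bound ${\rm Inf}_{x_k}[\psi_i]^{3/2} \leq (\max_x {\rm Inf}_x[\psi_i])^{1/2}\, {\rm Inf}_{x_k}[\psi_i]$ and use the identity $\sum_{x \in \Tb} {\rm Inf}_x[\psi_i] = \sum_I |I|\, \psi_i(I)^2 \leq l\, {\rm Var}(\Psi_i)$, which produces exactly the claimed bound \eqref{Lind}. The only genuinely delicate ingredient is the hypercontractive moment estimate in the previous paragraph; it is needed because the $\omega_x$ are assumed only to have bounded third moments rather than stronger integrability. Fortunately this is precisely the input used in the scalar case of \cite{casuzy17}, and no new probabilistic estimate is required beyond what is already available there; the multivariate extension amounts to the bookkeeping above.
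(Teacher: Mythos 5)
Your proposal is correct and follows essentially the same route as the paper's proof: a coordinate-by-coordinate Lindeberg swap, a third-order Taylor expansion in the swapped variable (whose order-$0,1,2$ terms cancel by the matching of means and variances and the independence of the discrete derivative from the swapped variable), hypercontractivity to bound the third moment of the degree-$(l-1)$ chaos $B_i^{(k)}=\partial_{x_k}\Psi_i$ by $C_lM^{l-1}\,{\rm Inf}_{x_k}[\psi_i]^{3/2}$, and the summation $\sum_x {\rm Inf}_x[\psi_i]\le l\,{\rm Var}(\Psi_i)$. The only difference is presentational: you write the multilinear decomposition $\Psi_i=A_i^{(k)}+uB_i^{(k)}$ explicitly and carry out the final summation step, whereas the paper phrases the same computation via the auxiliary function $\varrho_j^{\omega,\vartheta}$ and defers the last steps to \cite[Theorem 2.6]{casuzy17}.
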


We defer the proof of Lemma \ref{l:Lind} and first apply it by verifying that for each of the kernels $\psi^\sva_*$ in \eqref{kernels}, we have that the variance stays bounded while
\begin{equation}\label{e:LinDer}
\lim_{a\downarrow 0} \max_{x\in \Omega_a} {\rm Inf}_x(\psi^\sva_*)=0.
\end{equation}

Note ${\mathbb V}{\rm ar}(W^{\omega, \sva}_\varphi)=\sum_{x\in \Omega_\sva} \sva^{-2} (\int_{S_\sva(x)} \varphi(x) {\rm d}x)^2\to \Vert \varphi\Vert_2^2$, while
${\rm Inf}_x[\psi^\sva_\varphi]=a^2 \varphi(x)^2\leq a^2 |\varphi|_\infty^2$ which tends to $0$ as $a\downarrow 0$. Therefore the conditions of Lemma \ref{l:Lind} hold for $W^{\omega, \sva}_\varphi$ and its kernel $\psi^\sva_\varphi$.

Similarly, we have
\begin{align*}
&{\mathbb V}{\rm ar}(\Xi^{\le l}_{\phi,\Re}) \leq \Eb[(\Xi^{\le l}_{\phi,\Re})^2]\\
=\, & \sum_I \Bigg(\sum_{\genfrac{}{}{0pt}{3}{K_1, K_1 \subset \Omega_\sva \backslash I }{ K_1\cap K_2=\emptyset, |K_2|\in 2\Zb}}
\psi_{\leq l}^\sva(I\cup K_1\cup K_2) \Big(\frac{h^\sva_\cdot}{\lambda^\sva_\cdot}\Big)^{K_1} \Big(\frac{\widetilde{\phi}^\sva_\cdot}{\lambda^\sva_\cdot}\Big)^{K_2}(-1)^{|K_2|/2}\Bigg)^2  \\
\leq\, & \sum_I \Bigg(\sum_{J\subset \Omega_\sva \backslash I} \psi_{\leq l}^\sva(I\cup J) \Big(\frac{|h^\sva_\cdot| + |\widetilde{\phi}^\sva_\cdot|}{\lambda^\sva_\cdot}\Big)^J\Bigg)^2 \\
\leq\, & \sum_I \Big(\sum_{J\subset \Omega_\sva \backslash I} \psi_{\leq l}^\sva(I\cup J)^2  \Big) \Big(\sum_{J\subset \Omega_\sva \backslash I}  \Big(\frac{|h^\sva_\cdot| + |\widetilde{\phi}^\sva_\cdot|}{\lambda^\sva_\cdot}\Big)^{2J}\Big) \\
\leq\, & (1+Ca^2)^{|\Omega_\sva|} \sum_U 2^{|U|}\psi^\sva_{\le l}(U)^2,
\end{align*}
where we used that by \eqref{lambdaxa} and \eqref{phixa} and the assumption $\inf_x \lambda(x)>0$, we can bound $\frac{|h^\sva_\cdot| + |\widetilde{\phi}^\sva_\cdot|}{\lambda^\sva_\cdot}$ uniformly by $Ca$ for some finite $C$. It follows from Lemma \ref{l:spin} that  $\sva^{-k} \psi^\sva(x_1, \ldots, x_k)$ converges in $L^2(\Omega^k)$ for each $k\in\Nb$ and hence ${\mathbb V}{\rm ar}(\Xi^{\le l}_{\phi,\Re})$ is uniformly bounded as $\sva\downarrow 0$. Similarly,
\begin{align*}
{\rm Inf}_x(\psi^\sva_{\phi,\Re}) & = \sum_{I\ni x} \Bigg(\sum_{\genfrac{}{}{0pt}{3}{K_1, K_1 \subset \Omega_\sva \backslash I}{ K_1\cap K_2=\emptyset, |K_2|\in 2\Zb}}
\psi_{\leq l}^\sva(I\cup K_1\cup K_2) \Big(\frac{h^\sva_\cdot}{\lambda^\sva_\cdot}\Big)^{K_1} \Big(\frac{\widetilde{\phi}^\sva_\cdot}{\lambda^\sva_\cdot}\Big)^{K_2}(-1)^{|K_2|/2}\Bigg)^2 \\
& \leq \sum_{U\ni x} 2^{|U|}\psi^\sva_{\le l}(U)^2.
\end{align*}
Since for each $k\in\Nb$, $\sum_{|U|=k} \psi^\sva(U)^2$ converges to a finite integral as shown in \cite[Section 8]{casuzy17}, it is not difficult to see that $\lim_{\sva\downarrow 0}\max_x \sum_{U\ni x} \psi^\sva_{\le l}(U)^2=0$. For more details, see \cite{casuzy17}.

In summary, the conditions of Lemma \ref{l:Lind} also hold for $\Xi^{\le l}_{\phi,\Re}$ and its kernel $\psi^\sva_{\phi,\Re}$. The case
of $\Xi^{\le l}_{\phi,\Im}$ is similar, which completes the proof of Proposition \ref{p:JntCnv}.
\end{proof}

\begin{proof}[Proof of Lemma \ref{l:Lind}]
Label the elements of $\Tb$ by $1, \ldots, |\Tb|$ according to any fixed order.  Write
\[\zeta^j:=(\zeta_i^j)_{1\leq i\leq |\Tb|} \text{ where } \zeta_i^j=\begin{cases} \omega_i& \text{ if } i\leq j, \\ \vartheta_i & \text{ if } i>j, \end{cases} \]
which replaces $\omega_\cdot$ by $\vartheta_\cdot$ one variable at a time. By a telescoping sum, we have
\begin{equation}\label{EbPsi}
\Eb[g(\Psi(\omega))]-\Eb[g(\Psi(\vartheta))] = \sum_{j=1}^{|\Tb|}\left(\Eb[g(\Psi(\zeta^j))]-\Eb[g(\Psi(\zeta^{j-1}))]\right).
\end{equation}
Writing $g((y_i)_{1\leq i\leq n})$ and $\Psi_i((x_j)_{1\leq j\leq |\Tb|})$, we note that $g(\Psi(\zeta^j))$ and $g(\Psi(\zeta^{j-1}))$ differ only in the variable $x_j$. Denote $\varrho^{\omega, \vartheta}_j(x) = g(\Psi(\zeta^{j,x}))$ where $\zeta^{j,x}_j=x$ and $\bar \zeta^{j,x}_i=\zeta^j_i$ for $i\neq j$. By Taylor expansion,
\[
\varrho^{\omega, \vartheta}_j(x)= \varrho^{\omega, \vartheta}_j(0) + x (\varrho^{\omega, \vartheta}_j)'(0)+\frac{x^2}{2}(\varrho^{\omega, \vartheta}_j)''(0)+\Ec_j^{\omega, \vartheta}(x)
\]
where $|\Ec_j^{\omega, \vartheta}(x)|\leq C|x|^3\vert (\varrho^{\omega, \vartheta}_j)'''\vert _\infty$. Note that $ (\varrho^{\omega, \vartheta}_j)'(0)$ and $ (\varrho^{\omega, \vartheta}_j)''(0)$ do not depend on $\omega_j$ or $\vartheta_j$. Therefore using the fact that $\omega_j$ and $\vartheta_j$ have the same mean and variance, we obtain
\begin{align*}
\Eb[g(\Psi(\zeta^j))-g(\Psi(\zeta^{j-1}))] &=\Eb[\varrho^{\omega, \vartheta}_j(\omega_j) -\varrho^{\omega, \vartheta}_j(\vartheta_j)]
=\Eb[\Ec_j^{\omega, \vartheta}(\omega_j)] - \Eb[\Ec_j^{\omega, \vartheta}(\vartheta_j)].
\end{align*}
Substituting into \eqref{EbPsi} then gives
\begin{equation}\label{EgPsibd}
\big|\Eb[g(\Psi(\omega^\sva))]-\Eb[g(\Psi(\vartheta^\sva))] \big|
\leq \sum_{j=1}^{|\Tb|} \Big(\Eb[|\Ec_j^{\omega, \vartheta}(\omega_j)|] + \Eb[|\Ec_j^{\omega, \vartheta}(\vartheta_j)|]\Big).
\end{equation}
Let $\partial_i g$ and $\partial_j \Psi_i$ denote the derivative with respect to the $i$-th and $j$-th component of $g$ and $\Psi_i$, respectively. Note that
\begin{align*}
(\varrho^{\omega, \vartheta}_j)'''(x) & = \sum_{i_1, i_2, i_3=1}^n \partial^3_{i_1, i_2, i_3} g(\Psi(\zeta^{j,x})) \partial_j\Psi_{i_1}(\zeta^{j}) \partial_j\Psi_{i_2}(\zeta^{j}) \partial_j\Psi_{i_2}(\zeta^{j}),
\end{align*}
where we used the fact that $\Psi_i(\zeta^{j,x})$ is linear in $x$.  Bounding the derivatives of $g$ by their sup-norm and applying the triangle inequality, we obtain
\begin{align*}
|\Ec_j^{\omega, \vartheta}(\omega_j)|
\; \leq \; C \; |\omega_j|^3\big\vert (\varrho_j^{\omega, \vartheta})'''\big\vert_\infty
\leq C_{g, n}\sum_{i=1}^n \big|\omega_j \partial_j \Psi_i(\zeta^j)\big|^3,
\end{align*}
Substituting this bound into \eqref{EgPsibd} then gives
\begin{equation}\label{upsibd}
\big|\Eb[g(\Psi(\omega^\sva))]-\Eb[g(\Psi(\vartheta^\sva))] \big|
\leq C_{g,n} \sum_{i=1}^n \sum_{j=1}^{|\Tb|} \Eb\Big[\big|\omega_j \partial_j \Psi_i(\zeta^j)\big|^3\Big].
\end{equation}
Note that
$$
\Eb\big[\big|\omega_j \partial_j \Psi_i(\zeta^j)\big|^2\big] = \sum_{I \ni j} \psi_i(I)^2 = {\rm Inf}_j[\psi_i].
$$
The arguments to bound \eqref{upsibd} are exactly the same as in the proof of \cite[Theorem 2.6]{casuzy17}, starting from (4.17) therein, by applying hypercontractivity to $\Eb\big[\big|\omega_j \partial_j \Psi_i(\zeta^j)\big|^3\big]$ to get a bound of the form
$C_l {\rm Inf}_j[\psi_i]^{3/2}$. The proof there assumed only finite second moments for $\omega$ and $\vartheta$ and hence required a truncation to apply hypercontractivity. Here we have assumed finite third absolute moments, which allows us to apply hypercontractivity directly. Comparing with the statement of \cite[Theorem 2.6]{casuzy17} then gives the desired bound \eqref{Lind}.
\end{proof}

\section{Tightness}\label{s:Tight}

In this section we prove that the laws of $(W^{\omega,\sva},\widetilde{Z}^{\omega,\sva}_{\Omega;\lambda,h},\mu^{\omega,\sva}_{\Omega;\lambda,h})$ are tight in $\Wh\times\Rb\times\Mc_1(\Cc^\alpha_{loc}(\Omega))$.
Tightness of $(W^{\omega,\sva})_{\sva\in(0,1]}$ is standard and can be checked using the tightness criterion of \cite{fumo17} similarly to below. Therefore we omit the proof.
It was shown in Proposition \ref{p:JntCnv} that $\widetilde{Z}^{\omega,\sva}_{\Omega;\lambda,h}$ converges in distribution as $a\downarrow 0$ and hence their laws are tight. 

It remains to show that the family of laws of $(\mu^{\omega,\sva}_{\Omega;\lambda,h})_{\sva\in(0,1]}$ is tight. As random probability measures on the complete separable metric space $\Cc^\alpha_{loc}(\Omega)$, where $\alpha<-1/8$, tightness would follow if we show that
\begin{equation}\label{tightness}
\forall \, \varepsilon>0 \ \ \exists \ {\rm compact} \  K_\varepsilon \subset \Cc^\alpha_{loc}(\Omega) \ \ \mbox{such that }\ \  \sup_{\sva\in (0,1]}\Eb[\mu^{\omega,\sva}_{\Omega;\lambda,h}(K_\varepsilon^c)]\leq \varepsilon.
\end{equation}
This follows from the tightness criteria for random probability measures in \cite[Theorem 4.10]{ka17}. Note that
	\begin{align*}
	\Eb[\mu^{\omega,\sva}_{\Omega;\lambda,h}(K_\varepsilon^c)] \leq \Pb( \widetilde{Z}_{\Omega;\lambda,h}^{\omega,\sva}\leq\eta) +
	\Eb[\mu^{\omega,\sva}_{\Omega;\lambda,h}(K_\varepsilon^c) \ind_{\{\widetilde{Z}_{\Omega;\lambda,h}^{\omega,\sva}\geq\eta\}}].
	\end{align*}
Since $\widetilde{Z}_{\Omega;\lambda,h}^{\omega,\sva}$ converges in distribution to $\Zc^W_{\Omega; \lambda,h}$, which is positive almost surely by Lemma \ref{l:mom}, we can choose $\eta>0$ sufficiently small such that
\[\limsup_{\sva\downarrow 0} \Pb( \widetilde{Z}_{\Omega;\lambda,h}^{\omega,\sva}\leq\eta) \leq \varepsilon. \]
\red{The tightness of $(\mu^{\omega,\sva}_{\Omega;\lambda,h})_{\sva\in(0,1]}$ as $\Mc_1(\Cc^{\alpha}_{loc}(\Omega))$-valued random variables would then follow from the tightness of the average quenched laws $(\Eb[\mu^{\omega,\sva}_{\Omega;\lambda,h}(\cdot);\widetilde{Z}_{\Omega;\lambda,h}^{\omega,\sva}\geq\eta])$ for every $\eta>0$. To verify this, note that for any compact $K\subset \Cc^\alpha_{loc}(\Omega)$,
\begin{align*}
\Eb[\mu^{\omega,\sva}_{\Omega;\lambda,h}(K);\widetilde{Z}_{\Omega;\lambda,h}^{\omega,\sva}\geq\eta] & =  \Eb\left[\frac{\theta_\sva}{\widetilde{Z}_{\Omega;\lambda,h}^{\omega,\sva}}\Et_{\Omega}^{\sva}\left[ e^{\sum_{x\in\Omega_\sva}(\lambda_x^\sva\omega^\sva_x+h_x^\sva)\sigma_x} \ind_{\{ \Phi^\sva_\Omega\in K \}} \right]\ind_{\{\widetilde{Z}_{\Omega;\lambda,h}^{\omega,\sva}\geq\eta\}}\right] \\
&\leq \frac{1}{\eta} \Et_{\Omega}^{\sva}\left[ \theta_\sva\Eb\Big[e^{\sum_{x\in\Omega_\sva}\lambda_x^\sva\omega^\sva_x \sigma_x}\Big] e^{\sum_{x\in\Omega_\sva}h_x^\sva \sigma_x} \ind_{\{ \Phi^\sva_\Omega\in K \}} \right] \\
& \leq \frac{2}{\eta} \Et_{\Omega}^{\sva}\left[e^{\sum_{x\in\Omega_\sva}h_x^\sva \sigma_x} \ind_{\{ \Phi^\sva_\Omega\in K \}} \right] \\
& \leq \frac{2}{\eta}  \Et_{\Omega}^{\sva}\left[e^{\sum_{x\in\Omega_\sva}2h_x^\sva \sigma_x} \right]^{\frac12} \mu^\sva_\Omega(K)^{\frac12} \leq C_\eta \mu^\sva_\Omega(K)^{\frac12},
\end{align*}
where we used that $\theta_\sva\Eb\left[e^{\sum_{x\in \Omega_{\sva}}\lambda_x^\sva \omega_x^\sva\sigma_x}\right]\to 1$ uniformly in $\sigma$ as $\sva\rightarrow 0$ by Lemma \ref{l:Theta},  and
$$
\Et_{\Omega}^{\sva}\left[e^{\sum_{x\in\Omega_\sva}2h_x^\sva \sigma_x} \right] \leq \Et_{\Omega}^{\sva}\left[e^{\sum_{x\in\Omega_\sva}2|h_x^\sva| \sigma_x} \right] \leq \Et_{\Omega}^{\sva}\left[e^{2\Vert h\Vert_\infty \sva^{15/8}\sum_{x\in\Omega_\sva} \sigma_x} \right]
$$
is bounded above as $\sva\downarrow 0$ by the exponential moment bound for the magnetisation random variable \cite[Proposition 3.5]{cagane15}. Since $(\mu^\sva_\Omega)_{\sva\in (0,1]}$ is a tight family of probability measures on $\Cc^\alpha_{loc}(\Omega)$ for any $\alpha<-1/8$, as shown in \cite[Theorem 1.2]{fumo17}, it follows that $(\Eb[\mu^{\omega,\sva}_{\Omega;\lambda,h}(\cdot);\widetilde{Z}_{\Omega;\lambda,h}^{\omega,\sva}\geq\eta])$ is also tight for any $\eta>0$. This concludes the proof of tightness.
}

\section{Singularity}\label{s:Sing}

\subsection{Proof of Theorem \ref{t:Sing}}
In this section we prove Theorem \ref{t:Sing}, which shows that the disordered continuum limit $\mu_{\Omega;\lambda,h}^{W}$ is almost surely singular with respect to the pure continuum limit $\mu_{\Omega}$, but its average with respect to $W$ is absolutely continuous with respect to $\mu_{\Omega}$. We first prove the second statement which is straightforward and similar to \cite[Theorem 1.4]{casuzy16}.

\begin{proof}[Proof of absolute continuity in Theorem \ref{t:Sing}] Note that by \eqref{e:Qchar2} and \eqref{e:Par} and computing characteristic functions, we find that
\[
\Eb\left[\Zc_{\Omega;\lambda,h}^{W}\mu_{\Omega;\lambda,h}^{W}(\cdot)\right]=\mu_{\Omega;0,h}^{W}(\cdot) = \mu_{\Omega; h}(\cdot),
\]
which is the law of the continuum magnetisation field with external field $h$ but without disorder. Since $\Zc_{\Omega;\lambda,h}^{W}>0$ $\Pb$-a.s.\ by Lemma \ref{l:mom}, for any measurable set $A$ with $\mu_{\Omega;0,h}^{W}(A)=0$, we can find a set $E_A$ with
$\Pb(W\in E_A)=1$ such that $\mu_{\Omega;\lambda,h}^{W}(A)=0$ for all $W\in E_A$. Therefore $\Eb\mu_{\Omega;\lambda,h}^{W}(A)=0$ and hence $\Eb\mu_{\Omega;\lambda,h}^{W}$ is absolutely continuous with respect to $\mu_{\Omega; h}$. By \cite[Proposition 1.5]{cagane16} we have that $\mu_{\Omega;h}$ is absolutely continuous with respect to $\mu_{\Omega}$, which completes the proof of the absolute continuity.
We remark that $E_A$ depends on $A$, and there is no contradiction with the claim that almost surely,
$\mu_{\Omega;\lambda,h}^{W}$ is singular with respect to $\mu_\Omega$, because $\cap E_A$ will not be a measurable set with positive probability.
\end{proof}

\begin{proof}[Proof of singularity in Theorem \ref{t:Sing}]
Our proof is based on discrete approximations and techniques for bounding fractional moments that were first developed in the study of the disordered pinning model (see \cite[Chapter 6]{gi11}). We will in fact show that the almost sure singularity of $\mu^W_{\Omega; \lambda, h}$ with respect to $\mu_\Omega$ is a local phenomenon, namely that if $\widehat\Omega$ is any open subset of $\Omega$ and $\chi_{\widehat\Omega}\Phi$ is the restriction of $\Phi\in \Cc^{\alpha}_{loc}(\Omega)$ to $\widehat \Omega$, then almost surely, $\mu^W_{\Omega; \lambda, h}\circ \chi_{\widehat\Omega}^{-1}$ is singular with respect to $\mu_\Omega\circ \chi_{\widehat\Omega}^{-1}$.

Let us assume for the moment that $h\equiv0$. To ease notation, we shall drop the subscripts $\Omega,\lambda,h$ when it is unambiguous. In particular, $\mu^W:=\mu^W_{\Omega; \lambda, h}$ and $\mu:=\mu_\Omega$. Probability and expectation under $\mu$ and $\mu^W$ will be denoted by $\Pt$, $\Et$ and $\Pt^W$, $\Et^W$ respectively; probability and expectation under the law of white noise $W$ will be denoted by $\Pb$ and $\Eb$ respectively.

We first explain the proof strategy. Let $\widehat \Omega$ be any square sub-domain of $\Omega$. Without loss of generality, we may assume $\widehat \Omega= (0,1)^2$. To show that for $\Pb$ almost every $W$, $\mu^W\circ \chi_{\widehat\Omega}^{-1}$ is singular with respect to $\mu\circ \chi_{\widehat\Omega}^{-1}$, if suffices to pick a filtration $(\Fc^\Phi_N)$ on $\Cc_{loc}^\alpha$, which will be generated by functions of the magnetisation field $\Phi$ restricted to $\widehat \Omega$, and show that restricted to $\Fc_N^\Phi$, the Radon-Nikodym derivative of $\mu^W$ with respect to $\mu$,
\begin{equation}\label{QNW}
Q_N^W:=\frac{\d\mu^{W}|_{\Fc_N^\Phi}}{\d\mu|_{\Fc_N^\Phi}},
\end{equation}
converges in $\Pt$-probability to $0$ as $N\to\infty$. To prove this for $\Pb$-a.e.\ $W$, it suffices to show that
$$
\lim_{N\to\infty} \Eb\Et[ (Q_N^W)^{1/2}] = 0.
$$
We then try to approximate $Q_N^W$ by its lattice analogue, $Q_N^{\omega, \sva}$. The difficulty lies in the fact that we only have the convergence of the law of the magnetisation fields, which does not imply the weak convergence of the Radon-Nikodym derivatives $Q_N^{\omega, \sva}$ to $Q_N^W$. To overcome this difficulty and have a quantity more amenable to lattice approximations, we
replace $Q_N^W$ by
\begin{equation}
\Qc_N:=\Eb\left[Q_N^W\Big|\Fc_N^W\right],
\end{equation}
where $\Fc^W_N$ is a $\sigma$-algebra on $\Cc_{loc}^{\alpha'}$ generated by functions of the white noise $W$ restricted to $\widehat\Omega$. However, this is still not suitable for lattice approximation for reasons explained in Remark \ref{r:Radon}. We need to further approximate the random variables that generate $\Fc^W_N$ and $\Fc^\Phi_N$ by random variables that take values in a discrete set $2^{-m}\Zb$ with $m\in\Nb$ chosen arbitrarily large.

We now define the functions of $W$ and $\Phi$ that will generate $\Fc^W_N$ and $\Fc^\Phi_N$, as well as their discrete approximations.
Making the right choices for these functions is crucial for the fractional moment bounds to be carried out later.

Let $N=2^n$. For $i,j=1,...,N$, denote $B_{i,j}^N :=(\frac{i-1}{N},\frac{i}{N})\times(\frac{j-1}{N},\frac{j}{N})$, which partitions $\Omega=(0,1)^2$ into boxes of side length $1/N$. Let $\Phi\in \Cc_{loc}^\alpha$ be arbitrary, and let $W$ be sampled from $\Cc_{loc}^{\alpha'}$ according to the probability measure $\Pb$. We then define
\begin{equation}\label{WPNij}
\begin{gathered}
\Phi_{\lambda, i,j}^N := \Big\langle\Phi \lambda^2,\ind_{B_{i,j}^N}\Big\rangle, \qquad
W_{\lambda, i,j}^N := \Big\langle W,\lambda\ind_{B_{i,j}^N}\Big\rangle, \\
\Fc_N^\Phi :=\sigma\Big(\Big\{\Phi_{\lambda, i,j}^N\Big\}_{i,j=1}^N\Big), \qquad
\Fc_N^W :=\sigma\Big(\Big\{W_{\lambda, i,j}^N\Big\}_{i,j=1}^N\Big),
\end{gathered}
\end{equation}
where $\langle\Phi \lambda^2,\ind_{B_{i,j}^N}\rangle$ is well-defined and continuous in $\Phi \in \Cc^\alpha_{loc}$ for $\alpha \in (-1,0)$ by Lemma \ref{l:test} and the fact that $\Phi\to \lambda^2 \Phi$ is a continuous map from $\Cc^\alpha_{loc}$ to itself because $\lambda^2 \in C^1(\Omega)$ (see e.g.~\cite[Proposition 2.19]{fumo17}). This induces a $\sigma$-algebra $\Fc_N^\Phi$ on $\Cc^\alpha_{loc}$. As a stochastic integral, the mapping $\langle W, \lambda \ind_{B_{i,j}^N}\rangle$ is well-defined for $\Pb$-a.e.~$W\in \Cc^{\alpha'}$, and hence it induces a $\sigma$-algebra $\Fc_N^W$ on $\Cc^{\alpha'}_{loc}$ if we complete the Borel $\sigma$-algebra on $\Cc^{\alpha'}$ with sets of $\Pb$ measure $0$. Let $\Fc_N := \Fc_N^\Phi \times \Fc_N^W$ denote the product $\sigma$-algebra on $\Cc_{loc}^\alpha \times \Cc_{loc}^{\alpha'}$. Note that if $(W, \Phi)$ are sampled according to $\Pb\times\Pt$, while given $W$, $\Phi^W$ is sampled according to $\Pt^W$, then $\Qc_N$ is simply the Radon-Nikodym derivative of the law of $(W^N_{\lambda, i,j}, \Phi^{W,N}_{\lambda,i,j})_{i,j=1}^N$ with respect to the law of $(W^N_{\lambda, i,j}, \Phi^N_{\lambda, i,j})_{i,j=1}^N$.

Next, we further approximate the random variables $W^N_{\lambda, i,j}, \Phi^{N}_{\lambda, i,j}$, and $\Phi^{W,N}_{\lambda, i,j}$ by discrete valued functions
as follows. For $m\in\Nb$, define
\begin{equation}\label{WPNmij}
\begin{gathered}
\Phi_{\lambda, i,j}^{N, m} := 2^{-m} \big\lfloor 2^m \Phi_{\lambda, i,j}^{N} \big\rfloor,  \quad \quad
W_{\lambda, i,j}^{N,m} := 2^{-m} \big\lfloor 2^m W_{\lambda, i,j}^N \big\rfloor,  \\
\Fc_{N,m}^\Phi :=\sigma\Big(\Big\{\Phi_{\lambda, i,j}^{N,m}\Big\}_{i,j=1}^N\Big), \qquad
\Fc_{N,m}^W :=\sigma\Big(\Big\{W_{\lambda, i,j}^{N,m}\Big\}_{i,j=1}^N\Big),
\end{gathered}
\end{equation}
and let $\Fc_{N,m} := \Fc_{N,m}^\Phi \times \Fc_{N,m}^W$. Note that $(\Fc^\Phi_{N,m})_{m\in\Nb}$ and $(\Fc^W_{N,m})_{m\in\Nb}$ form filtrations that generate $\Fc_N^\Phi$ and $\Fc_N^W$ respectively. Denote
\begin{equation}\label{QN}
\Qc_{N,m} = \Eb\Et[\Qc_N| \Fc_{N,m}]],
\end{equation}
which is just the Radon-Nikodym derivative of the law of $(W^{N,m}_{\lambda,i,j}, \Phi^{W,N,m}_{\lambda, i,j})_{i,j=1}^N$ with respect to the law of $(W^{N,m}_{\lambda, i,j}, \Phi^{N,m}_{\lambda, i,j})_{i,j=1}^N$, where $(W, \Phi, \Phi^W)$ are sampled according to $\Pb\times\Pt \times\Pt^W$. To prove that for $\Pb$ almost every $W$, $\mu^W$ is singular with respect to $\mu$, it then suffices to show that
\begin{equation}\label{QNlim}
\lim_{N\to\infty} \lim_{m\to\infty} \Eb\Et\Big[\Qc_{N,m}^{1/2}\Big] = 0.
\end{equation}

We now define the lattice analogue of $\Qc_{N,m}$. We are free to choose the law of the disorder $\omega$ as long as Theorem \ref{t:JntCnv} can be applied. Therefore we let $\omega^\sva_x:= \sva^{-1}\int_{S_\sva(x)}W(\d y)$ so that $(\omega^\sva_x)_{x\in\Omega_\sva}$ are standard Gaussian random variables defined from the white noise $W$.  Instead of working with the rescaled piecewise constant magnetisation field $\Phi^\sva$ defined in \eqref{Phiomega}, we need to work with
\begin{equation}\label{Wom}
W^{\sva} := \sum_{x\in\Omega_\sva}\sva \omega_x^\sva\delta_x \qquad \mbox{and} \qquad \widetilde{\Phi}^a := \sum_{x\in\Omega_\sva}\sva^{15/8}\sigma_x\delta_x,
\end{equation}
where the spin configuration $\sigma$ is sampled according to $\Pt^a_\Omega$. If $\sigma$ is sampled according to $\Pt^{\omega, \sva}_{\Omega; \lambda ,0}$, we will denote the resulting magnetisation field by $\widetilde{\Phi}^{\omega, \sva}$ instead. We can apply the same operations as in \eqref{WPNij} and \eqref{WPNmij} to obtain $W^{\sva; N}_{\lambda, i,j}$, $\widetilde \Phi^{\sva;N}_{\lambda, i,j}$, $\widetilde \Phi^{\omega, \sva;N}_{\lambda, i,j}$, $W_{\lambda, i,j}^{a; N, m} $, $\widetilde{\Phi}_{\lambda, i,j}^{a; N, m}$ and $\widetilde{\Phi}_{\lambda, i,j}^{\omega, \sva; N, m}$.

For $N=2^n$, $m\in\Nb$ and $\sva\in(0,1]$, let $\Fc_N^\sva$ and $\Fc_{N,m}^\sva$ be the $\sigma$-fields generated by $(W^{\sva; N}_{\lambda, i,j}, \widetilde \Phi^{\sva;N}_{\lambda, i,j})_{i,j=1}^N$ and $(W^{\sva; N, m}_{\lambda, i,j}, \widetilde{\Phi}^{\sva; N, m}_{\lambda, i,j})_{i,j=1}^N$, respectively then, define
\begin{equation}\label{QNa}
\Qc_{N}^\sva:=\Eb\Et\left[\frac{\d\mu^{\omega,\sva}}{\d\mu^{\sva}}\Big|\Fc_{N}^\sva\right] \quad \mbox{and} \quad \Qc_{N,m}^\sva:=\Eb\Et\left[\frac{\d\mu^{\omega,\sva}}{\d\mu^{\sva}}\Big|\Fc_{N,m}^\sva\right].
\end{equation}
Note that $\Qc_N^\sva$ is the Radon-Nikodym derivative of the law of $(W^{\sva; N}_{\lambda, i,j}, \widetilde{\Phi}^{\omega, \sva; N}_{\lambda, i,j})_{i,j=1}^N$ with respect to the law of $(W^{\sva; N}_{\lambda, i,j}, \widetilde{\Phi}^{\sva; N}_{\lambda, i,j})_{i,j=1}^N$, while $\Qc_{N,m}^\sva$ is the Radon-Nikodym derivative of the law of $(W^{\sva; N, m}_{\lambda, i,j}, \widetilde{\Phi}^{\omega, \sva; N, m}_{\lambda, i,j})_{i,j=1}^N$ with respect to the law of $(W^{\sva; N, m}_{\lambda, i,j}, \widetilde{\Phi}^{\sva; N, m}_{\lambda, i,j})_{i,j=1}^N$.
\medskip

To carry through the lattice approximation, we need the following convergence result.
\begin{lem}\label{l:aprox}
Denote the laws of $(W^{N,m}_{\lambda,i,j}, \Phi^{N,m}_{\lambda, i,j})_{i,j=1}^N$ and $(W^{N,m}_{\lambda,i,j}, \Phi^{W,N,m}_{\lambda, i,j})_{i,j=1}^N$ by  $\mu_{N,m}$ and $\nu_{N,m}$ respectively, where $(W, \Phi, \Phi^W)$ is sampled from $\Pb\times\Pt \times\Pt^W$. Similarly, let $\mu^\sva_{N,m}$ and $\nu^\sva_{N,m}$ denote the law of $(W^{a; N,m}_{\lambda,i,j}, \widetilde{\Phi}^{\sva; N,m}_{\lambda, i,j})_{i,j=1}^N$ and $(W^{a; N,m}_{\lambda,i,j}, \widetilde{\Phi}^{\omega, \sva; N,m}_{\lambda, i,j})_{i,j=1}^N$, where $(W^\sva, \widetilde{\Phi}^\sva, \widetilde{\Phi}^{\omega, \sva})$ is sampled from $\Pb\times\Pt^\sva_\Omega \times\Pt^{\omega, \sva}_{\Omega; \lambda, 0}$. For any $N$ and $m$, we have $\mu^\sva_{N,m} \Rightarrow \mu_{N,m}$ and $\nu^\sva_{N,m} \Rightarrow \nu_{N,m}$ as $\sva\downarrow 0$.
\end{lem}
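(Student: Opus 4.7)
The plan is to derive the convergence from Theorem~\ref{t:JntCnv} in three stages: upgrade to joint convergence of $(W^\sva,\widetilde\Phi^{\omega,\sva})$ as random distributions, apply the $(i,j)$-indexed linear test functionals to obtain $\Rb^{2N^2}$-valued convergence, and finally apply the coordinate-wise floor $x\mapsto 2^{-m}\lfloor 2^m x\rfloor$. Throughout I work under the natural coupling $\omega_x^\sva=\sva^{-1}\int_{S_\sva(x)}W(\d y)$, so that $W^{\omega,\sva}$, the surrogate $W^\sva=\sum_{x\in\Omega_\sva}\sva\omega_x^\sva\delta_x$, and the continuum white noise $W$ all come from the same underlying source.

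For the first stage, Theorem~\ref{t:JntCnv} gives $(W^{\omega,\sva},\mu^{\omega,\sva}_{\Omega;\lambda,0})\Rightarrow(W,\mu^W_{\Omega;\lambda,0})$ jointly in $\Cc^{\alpha'}_{loc}(\Omega)\times\Mc_1(\Cc^\alpha_{loc}(\Omega))$. Since the map $(w,\mu)\mapsto\int f(w,\phi)\,\mu(\d\phi)$ is bounded continuous on $\Cc^{\alpha'}_{loc}\times\Mc_1(\Cc^\alpha_{loc})$ for every bounded continuous $f$, the standard random-measure sampling argument promotes this to
\[
(W^\sva,\widetilde\Phi^{\omega,\sva})\Longrightarrow(W,\Phi^W)\qquad \text{in }\Cc^{\alpha'}_{loc}(\Omega)\times\Cc^\alpha_{loc}(\Omega),
\]
where $\Phi^W$ is drawn from $\mu^W_{\Omega;\lambda,0}$ conditional on $W$; I am implicitly identifying $W^{\omega,\sva}$ with $W^\sva$, since both are discrete approximations of the same $W$ and converge to it in $\Cc^{\alpha'}_{loc}(\Omega)$. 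The pure analogue $(W^\sva,\widetilde\Phi^\sva)\Rightarrow(W,\Phi)$ is immediate from independence of the two factors and the individual marginal convergences (Furlan--Mourrat for $\widetilde\Phi^\sva$, elementary for $W^\sva$).

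For the second stage, the magnetisation-side functional $\phi\mapsto\langle \phi\lambda^2,\ind_{B^N_{i,j}}\rangle$ is continuous on $\Cc^\alpha_{loc}$ for $\alpha\in(-1,0)$ by Lemma~\ref{l:test} combined with continuity of pointwise multiplication by $\lambda^2\in C^1(\Omega)$. The white-noise-side functional $w\mapsto\langle w,\lambda\ind_{B^N_{i,j}}\rangle$ is \emph{not} continuous on $\Cc^{\alpha'}_{loc}$ for $\alpha'<-1$, and this is the main technical obstacle of the lemma: the test function is an indicator, and $W$ is too rough for pairing with it pointwise. To bypass this, I show that $\langle W^\sva,\lambda\ind_{B^N_{i,j}}\rangle$ converges to the stochastic integral $\langle W,\lambda\ind_{B^N_{i,j}}\rangle$ in $L^2(\Pb)$: by It\^o isometry this reduces to $L^2(\Rb^2)$-convergence of the step function $\sum_{x\in\Omega_\sva\cap B^N_{i,j}}\lambda(x)\ind_{S_\sva(x)}$ to $\lambda\ind_{B^N_{i,j}}$, which is straightforward from uniform continuity of $\lambda$ and the $O(\sva)$ area of the boundary strip of $B^N_{i,j}$. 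Combining the first-stage field convergence with this $L^2$ convergence through a Slutsky-type argument yields joint weak convergence of the $\Rb^{2N^2}$-valued pre-floor vectors in both the pure and disordered settings.

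In the third stage, I invoke the continuous mapping theorem for the coordinate-wise floor. Its discontinuity set is the finite union of hyperplanes $\{x_k\in 2^{-m}\Zb\}$ in $\Rb^{2N^2}$, which must be null under the limit law. The $W$-coordinates are non-degenerate centred Gaussians with variance $\Vert\lambda\ind_{B^N_{i,j}}\Vert_{L^2}^2>0$ (using $\lambda\geq\lambda_{min}>0$), hence atom-free. The pure magnetisation marginals $\langle \Phi\lambda^2,\ind_{B^N_{i,j}}\rangle$ are atom-free as $L^2$-limits of Ising sums whose individual atom masses vanish, and the disordered marginals under $\Pb\otimes\Pt^W$, having the same law as under $\Eb\mu^W_{\Omega;\lambda,0}$, are absolutely continuous with respect to the pure marginals by the first half of Theorem~\ref{t:Sing} already established, hence also atom-free. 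The main obstacle throughout is the second-stage $W$-side where continuous mapping fails and the $L^2$-approximation route is essential; the atom-freeness verification in the third stage is a secondary technicality.
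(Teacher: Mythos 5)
Your overall architecture matches the paper's: start from the joint convergence in Theorem \ref{t:JntCnv}, promote it to convergence of the sampled pair $(W^\sva,\widetilde\Phi^{\omega,\sva})$, apply the linear functionals, and finish with the continuous mapping theorem for the coordinate-wise floor, which requires atomlessness of the limit coordinates. Two of your choices differ from the paper in a useful way. First, you explicitly flag that $w\mapsto\langle w,\lambda\ind_{B^N_{i,j}}\rangle$ is not continuous on $\Cc^{\alpha'}_{loc}$ and repair this via the It\^o isometry under the coupling $\omega^\sva_x=\sva^{-1}\int_{S_\sva(x)}W(\d y)$; the paper simply invokes ``the continuous mapping theorem'' for all coordinates (implicitly relying on the same coupling), so your treatment is the more careful one, though the ``Slutsky-type'' combination of the in-probability convergence of the $W$-coordinates with the weak convergence of the $\Phi$-coordinates deserves a line of detail (e.g.\ approximating $\lambda\ind_{B^N_{i,j}}$ by smooth functions and using a convergence-together argument). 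Second, for atomlessness of the disordered marginals you use the already-proved absolute continuity $\Eb\mu^W_{\Omega;\lambda,0}\ll\mu_\Omega$ from the first half of Theorem \ref{t:Sing}; the paper instead proves uniform integrability of $\Qc_N^\sva$ (Lemma \ref{l:UniInt}) and applies Lemma \ref{l:Radon} to get absolute continuity of the limiting joint law. Your route is not circular (the absolute-continuity half of Theorem \ref{t:Sing} is proved independently of Lemma \ref{l:aprox}) and is genuinely lighter, since only marginal atomlessness is needed to kill the countable union of hyperplanes.

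There is, however, one genuine flaw: your justification that the pure marginal $\langle\Phi\lambda^2,\ind_{B^N_{i,j}}\rangle$ is atom-free --- ``as $L^2$-limits of Ising sums whose individual atom masses vanish'' --- is not a valid argument. The convergence here is weak, not $L^2$, and even vanishing maximal atom mass along a weakly convergent sequence does not prevent the limit from having atoms (e.g.\ $Z/n$ with $Z$ standard Gaussian is atomless for every $n$ but converges weakly to $\delta_0$). Since both your disordered-marginal argument and the final continuous-mapping step rest on this atomlessness, the gap must be filled; the correct route, used by the paper, is the characteristic-function (Fourier transform) bound of Camia--Garban--Newman \cite[Theorem 1.3]{cagane15}, adapted to the $\lambda^2$-weighted magnetisation over $B^N_{i,j}$. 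With that substitution your proof is complete. I would also resist calling the atomlessness check a ``secondary technicality'': it is exactly the point at which the paper needs Lemma \ref{l:UniInt}, and it is the reason the discretisation by $2^{-m}\lfloor 2^m\cdot\rfloor$ can be passed to the limit at all.
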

Since $\mu_{N,m}$, $\nu_{N,m}$, $\mu_{N,m}^\sva$ and $\nu^\sva_{N,m}$ are all supported on the discrete set $(2^{-m}\Zb)^{N^2}$, we have
\begin{align}
\Eb\Et\left[\Qc_{N,m}^{1/2}\right]  & = \sum_{x\in (2^{-m}\Zb)^{N^2}} \Big(\frac{\nu_{N,m}(x)}{\mu_{N,m}(x)}\Big)^{1/2} \mu_{N,m}(x) \notag\\
& \leq \liminf_{\sva\downarrow 0} \sum_{x\in (2^{-m}\Zb)^{N^2}} \Big(\frac{\nu^\sva_{N,m}(x)}{\mu^\sva_{N,m}(x)}\Big)^{1/2} \mu^\sva_{N,m}(x) = \liminf_{\sva\downarrow 0}\Eb\Et\left[(\Qc_{N,m}^\sva)^{1/2}\right].\label{e:Fat}
\end{align}
Therefore to prove \eqref{QNlim}, it suffices to show that
\begin{lem} \label{l:Null}
We have
\begin{equation}\label{e:Null}
\lim_{N\rightarrow \infty}\lim_{m\to\infty} \liminf_{\sva\downarrow 0}\Eb\Et\left[(\Qc_{N,m}^\sva)^{1/2}\right]=0.
\end{equation}
\end{lem}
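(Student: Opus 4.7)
The plan is a fractional-moment argument combined with an approximate factorization across the boxes $B^N_{i,j}$, exploiting the independence of the white noise on disjoint sub-domains. First, I rewrite $\Qc^\sva_{N,m}$ using Bayes' rule: since under $\mu^\sva_{N,m}$ the variables $W^{\sva;N,m}_{\lambda,\cdot}$ and $\widetilde\Phi^{\sva;N,m}_{\lambda,\cdot}$ are independent while under $\nu^\sva_{N,m}$ they are coupled only through the disordered Gibbs weight, the Gaussian $W$-marginals cancel and one obtains
\[
\Qc_{N,m}^{\sva}(W,\phi) \;=\; \frac{\Et_\Omega^\sva\bigl[e^{\sum_x \lambda_x^\sva\omega_x^\sva\sigma_x} \bigm| \widetilde\Phi_{\lambda,\cdot}^{\sva;N,m}=\phi\bigr]}{\Et_\Omega^\sva\bigl[e^{\sum_x \lambda_x^\sva\omega_x^\sva\sigma_x}\bigr]}.
\]
After the $\theta_\sva$ normalisation the denominator converges to $\Zc^W_{\Omega;\lambda,0}$, which is $\Pb$-a.s.\ strictly positive by Lemma \ref{l:mom}.

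Second, decompose the disorder exponent as $\sum_x \lambda_x^\sva\omega_x^\sva\sigma_x = \sum_{i,j} X^\sva_{i,j}$ with $X^\sva_{i,j} := \sum_{x\in B^N_{i,j}\cap\Omega_\sva}\lambda_x^\sva\omega_x^\sva\sigma_x$. Crucially, the families $(\omega_x^\sva)_{x\in B^N_{i,j}\cap\Omega_\sva}$ are jointly independent across $(i,j)$. Expanding the exponential via a polynomial chaos expansion analogous to \eqref{Upsilon} and regrouping into box-local pieces, I approximate $\Qc^\sva_{N,m}$ by a product $\prod_{i,j} \Qc^{\sva,i,j}_{N,m}$ of box-local Radon--Nikodym factors depending only on the coarse-grained data on $B^N_{i,j}$. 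The cross-box error terms arise from critical Ising correlations across box boundaries and are controlled by the $L^2$ kernel bounds \eqref{e:fUpp}--\eqref{fOmbd}. Cauchy--Schwarz applied to the approximate product yields
\[
\Eb\Et\bigl[(\Qc^\sva_{N,m})^{1/2}\bigr] \;\leq\; \bigl(1 + o_{\sva,m}(1)\bigr)\prod_{i,j=1}^{N}\Eb\Et\bigl[(\Qc^{\sva,i,j}_{N,m})^{1/2}\bigr].
\]

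Third, in the iterated limit $\sva\downarrow 0$ followed by $m\to\infty$, Lemma \ref{l:aprox} implies that each box factor converges to the Bhattacharyya coefficient between the continuum RFIM and pure Ising joint laws of $(W^N_{\lambda,i,j},\Phi^N_{\lambda,i,j})$ restricted to $B^N_{i,j}$. By the conformal scale invariance of the critical Ising magnetisation field encoded in Lemma \ref{l:spin}, after rescaling $B^N_{i,j}$ to the unit square the continuum problem remains a genuine RFIM with disorder strength bounded below uniformly in $N$, thanks to $\lambda_{\min}>0$. Hence each factor is at most $1-c/2$ for some $c>0$ independent of $N$ and $(i,j)$, giving
\[
\lim_{m\to\infty}\liminf_{\sva\downarrow 0}\Eb\Et\bigl[(\Qc^\sva_{N,m})^{1/2}\bigr] \;\leq\; (1-c/2)^{N^2},
\]
which vanishes as $N\to\infty$ and establishes \eqref{e:Null}.

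The main obstacle will be the approximate factorisation in the second step. Because the critical 2D Ising has power-law (non-integrable) correlations, the pure-Ising conditional expectation does not factorise over boxes in a naive way; the exact factorisation occurs only at the level of the independent disorder increments. Quantifying the Ising-correlation error and showing that it is absorbed into a multiplicative factor $1+o_{\sva,m}(1)$ independent of $N$ is the technical heart of the proof. The chaos-expansion tools developed in Section \ref{s:FDD}, combined with the sharp $L^2$ bounds on $f_\Omega$ from Lemma \ref{l:spin} and the positive/negative-moment bounds on partition functions from Lemma \ref{l:mom}, should provide the right framework to carry this out rigorously.
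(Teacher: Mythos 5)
Your proposal follows the heuristic sketched in the paper's introduction (independent sub-domains, each contributing a strictly-less-than-one Bhattacharyya factor), but two steps contain genuine gaps that the paper's actual argument is specifically designed to avoid. First, the approximate factorisation $\Qc^\sva_{N,m}\approx\prod_{i,j}\Qc^{\sva,i,j}_{N,m}$ with a multiplicative error $1+o_{\sva,m}(1)$ \emph{uniform in $N$} is not available: under the critical Ising measure the block observables $\widetilde\Phi^{\sva;N}_{\lambda,i,j}$ in adjacent boxes are strongly correlated (covariances comparable to variances, by the $|x-y|^{-1/4}$ decay), so neither the reference law $\mu^\sva_{N,m}$ nor the conditional structure of $\nu^\sva_{N,m}$ factorises over boxes, and Cauchy--Schwarz does not convert an expectation of a product of dependent factors into a product of expectations. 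The only independence genuinely available is that of the disorder increments $W^{\sva;N}_{\lambda,i,j}$, and the paper exploits exactly that and nothing more: it never factorises the Ising measure, instead performing the conditional Gaussian computation of Lemma \ref{l:UniInt} and, for the present lemma, a change of measure on the disorder alone.

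Second, and more seriously, the claim that each box factor is at most $1-c/2$ with $c>0$ independent of $N$ is quantitatively false. The disorder is relevant, which means its effective strength \emph{shrinks} under zooming in: on a box of side $1/N$ the block magnetisation is of order $N^{-15/8}$ while $\Lambda^{\sva,N}_{i,j}=\Vert\lambda\Vert^2_{L^2(B^N_{i,j})}\sim N^{-2}$, so the per-box tilting exponent $(\widetilde\Phi^{\sva,N}_{\lambda,i,j})^2/\Lambda^{\sva,N}_{i,j}$ is of order $N^{-7/4}$ and the per-box Bhattacharyya deficiency vanishes as $N\to\infty$. Singularity holds only because $N^2\cdot N^{-7/4}=N^{1/4}\to\infty$, i.e.\ the deficiency vanishes more slowly than $N^{-2}$; establishing this requires a quantitative \emph{lower} bound on the absolute block magnetisations, uniformly over a positive fraction of boxes. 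This is precisely what the paper's proof supplies: it chooses the tilting function $f^\sva_{N,m}=\exp(-S^\sva_N\ind_{\{X^\sva_{N,m}\geq M^\sva_N\}})$ built from the signs $\rho^{N,\sva}_{i,j}$, bounds $\Eb\Et[(\Qc^\sva_{N,m})^{1/2}]\leq\Eb\Et[f^\sva_{N,m}\,\d\mu^{\omega,\sva}/\d\mu^\sva]^{1/2}\,\Eb\Et[(f^\sva_{N,m})^{-1}]^{1/2}$, reinterprets $\d\mu^{\omega,\sva}/\d\mu^\sva$ as the size-biasing \eqref{e:CoM} under which $X^\sva_N$ acquires mean $\sum_{i,j}|\sum_{x\in B^{N,\sva}_{i,j}}(\lambda^\sva_x)^2\sigma_x|$, and proves via RSW, the FKG inequality and Paley--Zygmund (Lemma \ref{l:IConv}) that this mean, normalised by the standard deviation, diverges. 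To repair your approach you would need both an honest treatment of the inter-box Ising correlations and a per-box estimate of exactly this Lemma \ref{l:IConv} type; as written, the bound $(1-c/2)^{N^2}$ does not follow.
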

\noindent
This would conclude the proof of the almost sure singularity of $\mu^W$ with respect to $\mu$ when $h\equiv0$. We will prove Lemmas \ref{l:aprox} and \ref{l:Null} in the next two subsections.

To extend the singularity result to general $h\in C^1(\Omega)$, notice that the Radon-Nikodym derivative of the joint law of $(W, \Phi)$ under $\Pb\Pt^{\omega, \sva}_{\Omega; \lambda, h+\Delta}$ with respect to the joint law of $(W, \Phi)$ under $\Pb\Pt^{\omega, \sva}_{\Omega; \lambda, h}$ is given by
\begin{align*}
\frac{\d \mu^{\omega,\sva}_{\Omega;\lambda,h+\Delta}}{\d \mu^{\omega,\sva}_{\Omega;\lambda,h}}(\Phi)
\; = \; \exp(\langle \Phi, \Delta\rangle)\frac{\widetilde{Z}^{\omega,\sva}_{\Omega;\lambda,h}}{\widetilde{Z}^{\omega,\sva}_{\Omega;\lambda,h+\Delta}} =: Y,
\end{align*}
which is uniformly integrable with respect to $\Pb\Pt^{\omega, \sva}_{\Omega; \lambda, h}$ because for any $C>0$,
\begin{align*}
\Eb\Et^{\omega, \sva}_{\Omega; \lambda, h}\big[Y \ind_{Y>C}\big]
& = \Eb\Et^{\omega, \sva}_{\Omega; \lambda, h+\Delta}\big[\ind_{Y>C}\big] \\
& \leq C^{-1} \Eb\Et^{\omega, \sva}_{\Omega; \lambda, h+\Delta}\Bigg[e^{\langle \Phi, \Delta\rangle} \frac{\widetilde{Z}^{\omega,\sva}_{\Omega;\lambda,h}}{\widetilde{Z}^{\omega,\sva}_{\Omega;\lambda,h+\Delta}} \Bigg] \\
& = C^{-1} \Eb\Et^{\sva}_{\Omega}\Bigg[e^{\sum_x \sigma_x \Delta^\sva_x} \cdot \frac{e^{\sum_x \sigma_x (\lambda^\sva_x \omega^\sva_x+ h^\sva_x+\Delta^\sva_x)}}{Z^{\omega, \sva}_{\Omega;\lambda, h+\Delta}}\cdot \frac{\widetilde{Z}^{\omega,\sva}_{\Omega;\lambda,h}}{\widetilde{Z}^{\omega,\sva}_{\Omega;\lambda,h+\Delta}} \Bigg] \\
& = C^{-1} \Eb\Bigg[\frac{\widetilde{Z}^{\omega,\sva}_{\Omega;\lambda,h+2\Delta}\,\widetilde{Z}^{\omega,\sva}_{\Omega;\lambda,h}}{\big(\widetilde{Z}^{\omega,\sva}_{\Omega;\lambda,h+\Delta}\big)^2}\Bigg] \\
& \leq C^{-1} \Eb\Big[\big(\widetilde{Z}^{\omega,\sva}_{\Omega;\lambda,h+2\Delta}\big)^3\Big]^{\frac13} \Eb\Big[\big(\widetilde{Z}^{\omega,\sva}_{\Omega;\lambda,h}\big)^3\Big]^{\frac13} \Eb\Big[\big(\widetilde{Z}^{\omega,\sva}_{\Omega;\lambda, h+\Delta}\big)^{-6}\Big]^{\frac13},
\end{align*}
where the moments are uniformly bounded by Lemma \ref{l:mom}. Since $h$ and $\Delta$ can be chosen arbitrarily, we can now apply Lemma \ref{l:Radon} to conclude that as $\sva\downarrow 0$, the limiting law of $(W, \Phi)$ under $\Pb \Pt^W_{\Omega; \lambda, h_1}$ is mutually absolutely continuous with respect to the law of $(W, \Phi)$ under $\Pb \Pt^W_{\Omega; \lambda, h_2}$. In particular, for $\Pb$-a.e.\ $W$, $\Pt^W_{\Omega; \lambda, h_1}$ is mutually absolutely continuous with respect to $\Pt^W_{\Omega; \lambda, h_2}$. Since $\Pt^W_{\Omega; \lambda, 0}$ is singular with respect to $\Pt_\Omega$, so must be $\Pt^W_{\Omega; \lambda, h}$  for any $h\in C^1(\Omega)$.
\end{proof}

\subsection{Proof of Lemma \ref{l:aprox}}\label{s:unifint}

\begin{proof}[Proof of Lemma \ref{l:aprox}]
By Theorem \ref{t:JntCnv}, $(W^{\sva}, \Phi^\sva, \Phi^{\omega, \sva})$ converges in distribution \red{to} $(W, \Phi, \Phi^W)$ as $\sva\downarrow 0$, where $\Phi^\sva$ and $\Phi^{\omega, \sva}$ are the \red{rescaled piecewise constant magnetisation fields.} Define $(\Phi^{\sva; N}_{\lambda, i,j}, \Phi^{\omega, \sva; N}_{\lambda, i,j})_{i, j=1}^N$ from $(\Phi^\sva, \Phi^{\omega, \sva})$ as in \eqref{WPNij} and \eqref{WPNmij}. By the continuous mapping theorem, $(W^{\sva; N}_{\lambda, i,j}, \Phi^{\sva; N}_{\lambda, i,j}, \Phi^{\omega, \sva; N}_{\lambda, i,j})_{i, j=1}^N$ converges in distribution to $(W^{N}_{\lambda, i,j}, \Phi^{N}_{\lambda, i,j}, \Phi^{W,N}_{\lambda, i,j})_{i,j=1}^N$.
Since $\Phi^\sva$ and $\Phi^{\omega, \sva}$ are smeared out versions of $\widetilde\Phi^\sva$ and $\widetilde\Phi^{\omega, \sva}$ with the latter supported on $\Omega_\sva$, we have that
\begin{align*}
\left|\widetilde{\Phi}^{\sva,N}_{\lambda,i,j}-\Phi^{\sva,N}_{\lambda,i,j}\right|
=\left|\sum_{x\in B^{N, \sva}_{i,j}}\!\sva^{15/8}\sigma_x\Big(\lambda^2(x)-\sva^{-2}\int_{S_\sva(x)} \lambda^2(y) {\rm d}y\Big)\right|
& \leq 2\sva^{7/8} N^{-2}\Vert \lambda \Vert_\infty \Vert \lambda'\Vert_\infty
\end{align*}
converges to $0$ deterministically. Therefore $(W^{\sva; N}_{\lambda, i,j}, \widetilde{\Phi}^{\sva; N}_{\lambda, i,j}, \widetilde{\Phi}^{\omega, \sva; N}_{\lambda, i,j})_{i, j=1}^N$ also converges in distribution to $(W^{N}_{\lambda, i,j}, \Phi^{N}_{\lambda, i,j}, \Phi^{W,N}_{\lambda, i,j})_{i,j=1}^N$.
\medskip

To show that the law of the discretised random variables also converge, it only remains to show that $(W^{N}_{\lambda, i,j}, \Phi^{N}_{\lambda, i,j}, \Phi^{W,N}_{\lambda, i,j})_{i,j=1}^N$ are continuous random variables. For $\Phi^{N}_{\lambda, i,j}$, this follows similarly to \cite[Theorem 1.3]{cagane15}, which is based on bounds on its Fourier transform; for $W^{N}_{\lambda, i,j}$, this follows because it is Gaussian. We show in Lemma \ref{l:UniInt} below that $\Qc_N^\sva$, the Radon-Nikodym derivative of the law of $(W^{\sva; N}_{\lambda, i,j}, \widetilde{\Phi}^{\omega, \sva; N}_{\lambda, i,j})$ with respect to $(W^{\sva; N}_{\lambda, i,j}, \widetilde{\Phi}^{\sva; N}_{\lambda, i,j})$, is uniformly integrable as $\sva\downarrow 0$, and hence by Lemma \ref{l:Radon}, the law of the limit $(W^{N}_{\lambda, i,j}, \Phi^{W, N}_{\lambda, i,j})$ is absolutely continuous with respect to the law of $(W^{N}_{\lambda, i,j}, \Phi^{N}_{\lambda, i,j})$. Therefore $(\Phi^{W,N}_{\lambda,i,j})_{i,j=1}^N$ are also continuous random variables, which concludes the proof.
\end{proof}

We now prove the uniform integrability of $(\Qc_N^\sva)_{\sva\in(0,1)}$ defined in \eqref{QNa}.
\begin{lem}\label{l:UniInt}
For each $N\in\Nb$, $(\Qc_N^\sva)_{\sva\in(0,1)}$ is uniformly integrable with respect to $\Eb\Pt$.
\end{lem}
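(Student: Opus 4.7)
The plan is to prove the stronger statement $\sup_{\sva \in (0,1)} \Eb\Pt[(\Qc_N^\sva)^2] < \infty$, which implies uniform integrability via the de la Vall\'ee Poussin criterion. Since $W$ (under $\Pb$) and $\sigma$ (under $\Pt^\sva$) are independent, $\mu^\sva_N$ is the product of the laws of $(W^{\sva;N}_{\lambda,i,j})_{i,j}$ and $(\widetilde\Phi^{\sva;N}_{\lambda,i,j})_{i,j}$, and one has the explicit formula
$$
\Qc_N^\sva(w,\phi) \;=\; \frac{\Eb\bigl[\Pt^{\omega,\sva}(\widetilde\Phi^{\sva;N}_\lambda = \phi) \,\big|\, W^{\sva;N}_\lambda = w\bigr]}{\Pt^\sva(\widetilde\Phi^{\sva;N}_\lambda = \phi)}.
$$
Applying conditional Jensen to the numerator and then passing to a two-replica expansion (with two copies $\sigma^{(1)}, \sigma^{(2)}$ drawn from $\Pt^{\omega,\sva}$ for the same realization of $\omega$) yields
$$
\Eb\Pt[(\Qc_N^\sva)^2] \;\leq\; \Eb\Et^{\omega,\sva,\otimes 2}\!\left[\frac{\ind\{\widetilde\Phi^{\sva;N,(1)}_\lambda = \widetilde\Phi^{\sva;N,(2)}_\lambda\}}{\Pt^\sva(\widetilde\Phi^{\sva;N,(1)}_\lambda)}\right].
$$

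The next step is to unfold the disordered Gibbs weights and integrate over $\omega$. Written in terms of the pure Ising measure, the right-hand side becomes a ratio involving two partition-function replicas together with the coarse-grained constraint. A naive Gaussian integration over $\omega$, followed by conversion to the normalized $\widetilde Z^{\omega,\sva}$, would produce the divergent factor $\theta_\sva^{-1} = e^{\frac{1}{2}\sva^{-1/4}\|\lambda\|^2_{L^2}}$, so the coarse-graining must be used essentially. The key idea is to decompose the disorder as $\omega = \omega^\| + \omega^\perp$, where $\omega^\|$ is the $L^2$-projection onto $\mathrm{span}\{\lambda \ind_{B_{i,j}^N}\}_{i,j}$ and is measurable with respect to $W^{\sva;N}_\lambda$, while $\omega^\perp$ is the orthogonal complement. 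The contribution of $\omega^\|$ to $H^\omega(\sigma^{(k)}) := \sum_x \lambda^\sva_x \omega^\sva_x \sigma^{(k)}_x$ depends on $\sigma^{(k)}$ only through its block sums, which are forced to coincide on the two replicas. Consequently the $\omega^\|$-part of $H^\omega(\sigma^{(1)}) + H^\omega(\sigma^{(2)})$ can be matched against the squared normalization in the denominator after taking the Gaussian expectation over $\omega^\|$, producing no divergent factor; the residual Gaussian integration over $\omega^\perp$ yields only a finite, $N$-dependent prefactor.

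Combining this cancellation with Cauchy--Schwarz and the positive and negative moment bounds on $\widetilde Z^{\omega,\sva}_{\Omega;2\lambda,0}$ and $\widetilde Z^{\omega,\sva}_{\Omega;\lambda,0}$ from Lemma \ref{l:mom} then yields $\Eb\Pt[(\Qc_N^\sva)^2] \leq C_N < \infty$ uniformly in $\sva \in (0,1)$. The main obstacle is rigorously carrying out the $\omega^\|/\omega^\perp$ decomposition and verifying that the coarse-graining constraint $\widetilde\Phi^{\sva;N,(1)}_\lambda = \widetilde\Phi^{\sva;N,(2)}_\lambda$ exactly compensates the divergent $\theta_\sva^{-1}$; this is where the specific matching of the $\lambda$-weighting in $W^{\sva;N}_\lambda$ and $\widetilde\Phi^{\sva;N}_\lambda$ with the interaction $\sum_x \lambda^\sva_x \omega^\sva_x \sigma_x$ is crucial, and where the assumption $\lambda_{\min} > 0$ from Theorem \ref{t:JntCnv} ensures that the constraint subspace is non-degenerate uniformly in the block index.
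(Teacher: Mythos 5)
Your opening reduction is fine: writing $\Qc_N^\sva$ as the ratio of the (coarse-grained-$W$)-conditional law of the block magnetisations to the pure law, and applying conditional Jensen, does give the correct inequality
\[
\Eb\Et\big[(\Qc_N^\sva)^2\big]\;\le\;\sum_\phi\frac{\Eb\big[\Pt^{\omega,\sva}(\widetilde\Phi^{\sva;N}_\lambda=\phi)^2\big]}{\Pt^\sva(\widetilde\Phi^{\sva;N}_\lambda=\phi)} .
\]
But this is where the proposal stops being a proof. You have identified the ``main obstacle'' as the cancellation of $\theta_\sva^{-1}$, and your $\omega^{\|}/\omega^{\perp}$ decomposition does plausibly handle that (the parallel part of $\sum_x\lambda_x^\sva\omega_x^\sva\sigma_x$ is indeed a function of the block sums, and the $\omega^{\perp}$ Gaussian integral produces $e^{\frac12\|P^\perp(\lambda(\sigma^{(1)}+\sigma^{(2)}))\|^2}\le e^{\sum_x(\lambda^\sva_x)^2}e^{L_\lambda(\sigma^{(1)},\sigma^{(2)})}$, whose divergent prefactor is absorbed by $\theta_\sva^2$). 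The genuine gap is the factor $1/\Pt^\sva(\widetilde\Phi^{\sva;N}_\lambda=\phi)$. This is the reciprocal of the probability of an \emph{individual atom} of the discrete block-magnetisation vector; these probabilities tend to $0$ as $\sva\downarrow 0$, the possible values do not even lie on a regular lattice when $\lambda$ is non-constant, and no atom-level lower bound (a local limit theorem / anti-concentration estimate for the critical Ising block magnetisation, uniform in $\sva$) is available from the inputs used in this paper (convergence in distribution, exponential moments, $L^2$ convergence of partition functions). In the pure case the sum collapses to $\sum_\phi\Pt^\sva(\phi)=1$ by an exact cancellation, but under the disorder-averaged two-replica measure you must compare $\Eb[\Pt^{\omega,\sva}(\phi)^2]$ to $\Pt^\sva(\phi)$ atom by atom, and nothing in your sketch controls this ratio uniformly in $\sva$. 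A second, smaller issue: the factor $(\widetilde Z^{\omega,\sva})^{-2}$ sits inside the $\omega$-expectation and is correlated with the replicas; separating it by H\"older costs negative moments (available via Lemma \ref{l:mom}) but raises the replica count and does not touch the atom problem. Finally, note that you are aiming at $\sup_\sva\Eb\Et[(\Qc_N^\sva)^2]<\infty$, which is far stronger than what is needed and may well be out of reach; plain conditional Jensen applied to the full Radon--Nikodym derivative gives $\Eb\Et[(\d\mu^{\omega,\sva}/\d\mu^\sva)^2]\sim e^{c\sva^{-1/4}}$, so any $L^2$ bound must exploit the coarse-graining in exactly the delicate way that is missing here.

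For contrast, the paper takes a much softer route that avoids both difficulties. It writes $\d\mu^{\omega,\sva}/\d\mu^\sva=(1+o(1))H^{\omega,\sva}/\widetilde Z^{\omega,\sva}$, splits on $\{\widetilde Z^{\omega,\sva}\le\varepsilon\}$ (whose contribution to $\Eb\Et[\Qc_N^\sva]$ is $(1+o(1))\Pb(\widetilde Z^{\omega,\sva}\le\varepsilon)$, small uniformly in $\sva$ because $\widetilde Z^{\omega,\sva}\Rightarrow\Zc^W>0$), and on the complement bounds $1/\widetilde Z^{\omega,\sva}\le\varepsilon^{-1}$, reducing the claim to uniform integrability of $\Eb\Et[H^{\omega,\sva}\,|\,\Fc_N^\sva]$. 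The latter is computed \emph{exactly} by Gaussian conditioning (Lemma \ref{l:ConGau}) as $\prod_{i,j}\exp\big(\widetilde\Phi^{\sva,N}_{\lambda,i,j}W^{\sva,N}_{\lambda,i,j}/\Lambda^{\sva,N}_{i,j}-\tfrac12(\widetilde\Phi^{\sva,N}_{\lambda,i,j})^2/\Lambda^{\sva,N}_{i,j}\big)$, and its uniform integrability follows from the exponential moment bounds on the block magnetisation in \cite{cagane15}. No atom probabilities and no second moment of $\Qc_N^\sva$ ever enter. You should either adopt this decomposition or supply the missing uniform-in-$\sva$ control of $\sum_\phi\Eb[\Pt^{\omega,\sva}(\phi)^2]/\Pt^\sva(\phi)$; as written, the proposal does not establish the lemma.
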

\begin{proof}
Abbreviate $\mu^{\omega, \sva}_{\Omega; \lambda, 0}$ and $\mu^{\omega, \sva}_{\Omega; 0, 0}$ by $\mu^{\omega, \sva}$ and $\mu^\sva$ respectively. We have that
\[\frac{\d\mu^{\omega,\sva}}{\d\mu^{\sva}}=(1+o(1))\frac{H^{\omega,\sva}}{\widetilde{Z}^{\omega,\sva}} \quad \text{where} \quad H^{\omega,\sva}:=\exp\left(\sum_x\sigma_x\omega_x^\sva\lambda_x^\sva-\frac{1}{2}(\lambda_x^\sva)^2\right)\]
and $1+o(1)=\theta_\sva e^{\frac{1}{2}\sum_{x\in\Omega_\sva} (\lambda_x^\sva)^2}= e^{-\frac{1}{2} \sva^{-\frac{1}{4}} \Vert \lambda\Vert_{L^2}^2+\frac{1}{2}\sum_{x\in\Omega_\sva} (\lambda_x^\sva)^2}$ is deterministic.

Note that given $\varepsilon>0$, we can write
\begin{equation}\label{QcNde}
\Qc_N^\sva = \Eb\Et\left[\frac{H^{\omega,\sva}}{\widetilde{Z}^{\omega,\sva}}\ind_{\widetilde{Z}^{\omega,\sva}\leq \varepsilon}\Big|\Fc_N^\sva\right] + \Eb\Et\left[\frac{H^{\omega,\sva}}{\widetilde{Z}^{\omega,\sva}}\ind_{\widetilde{Z}^{\omega,\sva}> \varepsilon}\Big|\Fc_N^\sva\right],
\end{equation}
where the first term has expectation
$$
\Eb\Et\left[\Eb\Et\left[\frac{H^{\omega,\sva}}{\widetilde{Z}^{\omega,\sva}}\ind_{\widetilde{Z}^{\omega,\sva}\leq \varepsilon}\Big|\Fc_N^\sva\right]\right]
 = \Eb\Et\left[\frac{H^{\omega,\sva}}{\widetilde{Z}^{\omega,\sva}}\ind_{\widetilde{Z}^{\omega,\sva}\leq \varepsilon}\right]
 = (1+o(1))\Pb(\widetilde{Z}^{\omega,\sva}\leq \varepsilon)
$$
since $\widetilde{Z}^{\omega,\sva}=(1+o(1))\Et[H^{\omega,\sva}]$ by Lemma \ref{l:Theta}. This expectation can be made arbitrarily small uniformly in $\sva$ by choosing $\varepsilon>0$ small, because $\widetilde{Z}^{\omega,\sva}$ converges in distribution to $\Zc^W_{\Omega; \lambda, h}$ which is a.s.\ positive by Lemma \ref{l:mom}. The second term in \eqref{QcNde} is bounded by
$\varepsilon^{-1} \Eb\Et[H^{\omega,\sva}|\Fc_N^\sva]$. Combining these two observations, it follows that to prove uniform integrability of $(\Qc_N^\sva)_{\sva\in (0,1)}$, it suffices to prove that
\begin{equation}\label{HUI}
(\Eb\Et[H^{\omega,\sva}|\Fc_N^\sva])_{\sva\in(0,1)} \quad \mbox{is uniformly integrable}.
\end{equation}

We will use the following lemma whose proof is standard and will be omitted.
\begin{lem}\label{l:ConGau}
Suppose $(X_k)_{k=1}^L$, are independent, centred Gaussian random variables with variances $(\varsigma_k)_{k=1}^L$. For $M\in\Rb$ fixed, $(X_k)_{k=1}^{L-1}$ conditioned on the event $\sum_{k=1}^L X_k=M$ has a multivariate Gaussian distribution given by
\begin{equation}\label{Gauss1}
\Eb\left[X_j\Bigg|\sum_{k=1}^L X_k=M\right]=\frac{M\varsigma_j}{\sum_{k=1}^L\varsigma_k}
\end{equation}
and
\begin{equation}\label{Gauss2}
\Eb\left[X_j X_l\Bigg|\sum_{k=1}^L X_k=M\right]=\begin{cases}\frac{\varsigma_j\sum_{k\neq j}\varsigma_k}{\sum_{k=1}^L\varsigma_k} & \text{if } j=l,\\
\frac{\varsigma_j\varsigma_l\left(M^2-\sum_{k=1}^L\varsigma_k\right)}{\left(\sum_{k=1}^L\varsigma_k\right)^2} & \text{if } j\neq l.\end{cases}
\end{equation}
\end{lem}

Write $B_{i,j}^{N,\sva}$ for the discretisation of the box $B_{i,j}^N$ in $\sva \Zb^2$. Conditioning on the spin configuration $\sigma$ and
using independence of $\omega^\sva_\cdot$ in disjoint boxes, we have that $\Eb\Et\left[H^{\omega,\sva}\big|\Fc_N\right]$ equals
\begin{align*}
&\Et\Bigg[\Eb\Bigg[\exp\left(\sum_{x\in\Omega_\sva}\sigma_x\omega_x^\sva\lambda_x^\sva-\frac{1}{2}(\lambda_x^\sva)^2\right)\Bigg|\Bigg\{a\!\!\!\sum_{x\in B_{i,j}^{N,\sva}}\lambda(x)\omega_x^\sva=W_{\lambda, i,j}^{\sva,N}\Bigg\}_{i,j=1}^N\Bigg]\Bigg|\left\{\widetilde{\Phi}_{\lambda, i,j}^{\sva,N}\right\}_{i,j=1}^N\Bigg] \\
=\, &  \Et\Bigg[\prod_{i,j=1}^N\Eb\Bigg[\exp\Bigg(\sum_{x\in B_{i,j}^{N,\sva}}\sigma_x\omega_x^\sva\lambda_x^\sva-\frac{1}{2}(\lambda_x^\sva)^2\Bigg)\Bigg|\, a\!\!\!\sum_{x\in B_{i,j}^{N,\sva}}\lambda(x)\omega_x^\sva=W_{\lambda, i,j}^{\sva,N}\Bigg]\Bigg|\Bigg\{\widetilde{\Phi}_{\lambda, i,j}^{\sva,N}\Bigg\}_{i,j=1}^N\Bigg].
\end{align*}
Let $\Lambda_{i,j}^N:=\sum_{x\in B_{i,j}^{N,\sva}}\lambda(x)^2$ and
\[
\Pb_{i,j}^N(\cdot):=\Pb\Bigg(\cdot\Bigg|\, a\!\!\!\sum_{x\in B_{i,j}^{N,\sva}}\lambda(x)\omega_x^\sva=W_{\lambda, i,j}^{\sva,N}\Bigg).
\]
Fix any vertex $z\in B_{i,j}^{N,\sva}$. By Lemma \ref{l:ConGau}, the laws of $(\lambda(x)\omega_x)_{x\in B_{i,j}^{N,\sva}\setminus\{z\}}$ with respect to $\Pb_{i,j}^N$ are Gaussian with means and covariances given by
\begin{align*}
\Eb_{i,j}^N\left[\lambda(x)\omega_x^\sva\right] & =\frac{\lambda^2(x)\sva^{-1}W_{\lambda, i,j}^{\sva,N}}{\Lambda_{i,j}^N},\\
\text{Cov}_{\Pb_{i,j}^N}\left(\lambda(x)\omega_x^\sva,\lambda(y)\omega_y^\sva\right) & = \begin{cases}\frac{\lambda(x)^2(\Lambda_{i,j}^N-\lambda(x)^2)}{\Lambda_{i,j}^N} & \text{if } x=y, \\ -\frac{\lambda(x)^2\lambda(y)^2}{\Lambda_{i,j}^N} & \text{if } x\neq y. \end{cases}
\end{align*}
Recall that
\begin{equation*}
\widetilde{\Phi}_{\lambda,i,j}^{\sva,N}=\langle \widetilde{\Phi}^\sva, \lambda^2\ind_{B_{i,j}^N}\rangle=\sum_{x\in B_{i,j}^N}\sva^{15/8}\lambda^2(x)\sigma_x.
\end{equation*}
Since under $\Pb_{i,j}^N$, $\sum_{x\in B_{i,j}^{N,\sva}}\sigma_x\omega_x^\sva\lambda_x^\sva$ (recall that $\lambda^\sva_x=\sva^{7/8}\lambda(x)$) is a Gaussian random variable, it has exponential moment
\begin{align*}
& \Eb_{i,j}^N\left[\exp\left(\sum_{x\in B_{i,j}^{N,\sva}}\sigma_x\omega_x^\sva\lambda_x^\sva\right)\right] \\
=\, &  \exp\Bigg(\sum_{x\in B_{i,j}^{N,\sva}}\!\!\! \frac{\sigma_x\sva^{7/8}\lambda(x)^2\sva^{-1}W_{\lambda, i,j}^{\sva,N}}{\Lambda_{i,j}^N}+\frac{1}{2}\!\!\!\sum_{x\in B_{i,j}^{N,\sva}} \!\!\! \sva^{7/4}\lambda(x)^2
-\frac{1}{2}\!\!\! \! \sum_{x,y\in B_{i,j}^{N,\sva}}\!\!\! \frac{\sigma_x\sigma_y\lambda(x)^2\lambda(y)^2\sva^{7/4}}{\Lambda_{i,j}^N}\Bigg) \\
=\, &  e^{\frac{1}{2}\sum_{x\in B_{i,j}^{N,\sva}}(\lambda_x^\sva)^2}\exp\left(\frac{\widetilde{\Phi}_{\lambda, i,j}^{\sva,N}W_{\lambda, i,j}^{\sva,N}}{\sva^2\Lambda_{i,j}^N}-\frac{1}{2}\frac{(\widetilde{\Phi}_{\lambda, i,j}^{\sva,N})^2}{\sva^2\Lambda_{i,j}^N}\right).
\end{align*}
We note here that $\Lambda_{i,j}^{\sva,N}:=\sva^2\Lambda_{i,j}^N$ converges to $\Vert \lambda\Vert_{L^2(B_{i,j}^N)}^2$ as $\sva\downarrow 0$ and the exponential prefactor compensates with the tilting $-(\lambda_x^\sva)^2/2$ appearing in the definition of $H^{\omega,\sva}$. In particular,
\begin{align}\label{e:CRND}
\Eb\Et\left[H^{\omega,\sva}\big|\Fc_N^\sva\right] & = \prod_{i,j=1}^N\exp\left(\frac{\widetilde{\Phi}_{\lambda, i,j}^{\sva,N}W_{\lambda, i,j}^{\sva,N}}{\Lambda_{i,j}^{\sva,N}}-\frac{1}{2}\frac{(\widetilde{\Phi}_{\lambda, i,j}^{\sva,N})^2}{\Lambda_{i,j}^{\sva,N}}\right),
\end{align}
where conditioned on $\sigma$, $\frac{\widetilde{\Phi}_{\lambda, i,j}^{\sva,N}W_{\lambda, i,j}^{\sva,N}}{\Lambda_{i,j}^{\sva,N}}$ is a centered Gaussian random variable with variance $\frac{(\widetilde{\Phi}_{\lambda, i,j}^{\sva,N})^2}{\Lambda_{i,j}^{\sva,N}}$.

For $K>0$ let
\[
\Ac_K:=\{\Eb\Et\left[H^{\omega,\sva}\big|\Fc_N^\sva\right]>K\} \quad \text{and} \quad
\Bc_K:= \left\{\prod_{i,j=1}^N\exp\left(\frac{(\widetilde{\Phi}_{\lambda, i,j}^{\sva,N})^2}{\Lambda_{i,j}^{\sva,N}}\right)>K^{1/2}\right\}.
\]
We want to show that $\lim_{K\rightarrow \infty}\sup_{\sva\in(0,1]}\Eb\Et\left[\Eb\Et\left[H^{\omega,\sva}\big|\Fc_N^\sva\right]\ind_{\Ac_K}\right]=0$. We have that
\begin{equation}\label{e:UnIn}
\begin{aligned}
& \Eb\Et\left[\Eb\Et\left[H^{\omega,\sva}\big|\Fc_N^\sva\right]\ind_{\Ac_K}\right] \\
\leq\ &  \Eb\Et\left[\Eb\Et\left[H^{\omega,\sva}\big|\Fc_N^\sva\right]\ind_{\Bc_K}\right] +\Eb\Et\left[\Eb\Et\left[H^{\omega,\sva}\big|\Fc_N^\sva\right]\ind_{\Ac_K}\ind_{\Bc_K^c}\right].
\end{aligned}
\end{equation}
Using \eqref{e:CRND} and that $W_{\lambda, i,j}^{\sva,N}$ are independent, centred Gaussians with variances $\Lambda_{i,j}^{\sva,N}$, we can apply Markov's inequality to bound the final term in \eqref{e:UnIn} by
\begin{align*}
& \frac{1}{K}\Eb\Et\left[\prod_{i,j=1}^N\exp\left(\frac{2\widetilde{\Phi}_{\lambda, i,j}^{\sva,N}W_{\lambda, i,j}^{\sva,N}}{\Lambda_{i,j}^{\sva,N}}-\frac{(\widetilde{\Phi}_{\lambda, i,j}^{\sva,N})^2}{\Lambda_{i,j}^{\sva,N}}\right)\ind_{\Bc_K^c}\right] \\
=\, &  \frac{1}{K}\Et\left[\prod_{i,j=1}^N\Eb\left[\exp\left(\frac{2\widetilde{\Phi}_{\lambda, i,j}^{\sva,N}W_{\lambda, i,j}^{\sva,N}}{\Lambda_{i,j}^{\sva,N}}\right)\right]\exp\left(-\frac{(\widetilde{\Phi}_{\lambda, i,j}^{\sva,N})^2}{\Lambda_{i,j}^{\sva,N}}\right)\ind_{\Bc_K^c}\right] \\
=\, &  \frac{1}{K}\Et\left[\prod_{i,j=1}^N\exp\left(\frac{(\widetilde{\Phi}_{\lambda, i,j}^{\sva,N})^2}{\Lambda_{i,j}^{\sva,N}}\right)\ind_{\Bc_K^c}\right]
\leq K^{-1/2}
\end{align*}
uniformly in $\sva$. Similarly, the first term in \eqref{e:UnIn} is equal to
$$
\begin{aligned}
&\Eb\Et\left[\ind_{\Bc_K}\prod_{i,j=1}^N\exp\left(\frac{\widetilde{\Phi}_{\lambda, i,j}^{\sva,N}W_{\lambda, i,j}^{\sva,N}}{\Lambda_{i,j}^{\sva,N}}-\frac{1}{2}\frac{(\widetilde{\Phi}_{\lambda, i,j}^{\sva,N})^2}{\Lambda_{i,j}^{\sva,N}}\right)\right] \\
=\, & \Et\left[\ind_{\Bc_K}\prod_{i,j=1}^N\Eb\left[\exp\left(\frac{\widetilde{\Phi}_{\lambda, i,j}^{\sva,N}W_{\lambda, i,j}^{\sva,N}}{\Lambda_{i,j}^{\sva,N}}\right)\right]\exp\left(-\frac{1}{2}\frac{(\widetilde{\Phi}_{\lambda, i,j}^{\sva,N})^2}{\Lambda_{i,j}^{\sva,N}}\right)\right]  \\
=\, & \Pt(\Bc_K) \leq  2\sum_{i,j=1}^N\frac{\Et\left[(\widetilde{\Phi}_{\lambda, i,j}^{\sva,N})^2\right]}{\Lambda_{i,j}^{\sva,N}\log(K)}
\end{aligned}
$$
which converges to $0$ uniformly in $\sva$ as $K\rightarrow \infty$ since $\langle \widetilde{\Phi}^\sva,\lambda^2\ind_{B^N_{i,j}}\rangle$ has finite exponential moments uniformly in $\sva$ as shown in \cite[Proposition 3.5]{cagane15}.
\end{proof}

\subsection{Proof of Lemma \ref{l:Null}}\label{s:fracmom}
Our proof of Lemma \ref{l:Null} follows the fractional moment method first developed in the study of the disordered pinning model (see \cite[Chapter 6]{gi11}).  It will become clear from the proof why we chose to work with $W^{\sva,N}_{\lambda, i,j}$ and $\widetilde{\Phi}^{\sva,N}_{\lambda, i,j}$ as defined in \eqref{WPNij}.

Recall that $\widetilde{\Phi}^\sva=\sum_{x\in\Omega_\sva}\sva^{15/8}\sigma_x\delta_x$ and $W^{\sva} = \sum_{x\in\Omega_\sva}\sva \omega_x^\sva\delta_x$. We will choose a function $f_{N,m}^\sva=f_{N,m}^\sva(W^{\sva},\widetilde{\Phi}^\sva)\geq 0$ that is
measurable with respect to $\Fc_{N,m}^\sva$, which can be interpreted as a tilting of the underlying measure. By the Cauchy-Schwarz inequality we have
\begin{equation}
\begin{aligned}
\Eb\Et\left[(\Qc_{N,m}^\sva)^{1/2}\right]
& =\Eb\Et\left[(f_{N,m}^\sva)^{1/2}(\Qc_{N,m}^\sva)^{1/2}(f_{N,m}^\sva)^{-1/2}\right] \\
& \leq \Eb\Et\left[f_{N,m}^\sva\Qc_{N, m}^\sva\right]^{1/2}\Eb\Et\left[(f_{N,m}^\sva)^{-1}\right]^{1/2}.
\end{aligned}
\end{equation}
Recalling the definition of $\Qc_{N,m}^\sva$ and that $f_{N, m}^\sva$ is $\Fc_{N, m}^\sva$ measurable, we have that
\begin{align*}
\Eb\Et\left[f_{N, m}^\sva\Qc_{N, m}^\sva\right]
 = \Eb\Et\left[f_{N, m}^\sva\Eb\Et\left[\frac{\d \mu^{\omega,\sva}}{\d \mu^{\sva}}\Big|\Fc^\sva_{N,m}\right]\right]
 = \Eb\Et\left[f_{N, m}^\sva\frac{\d \mu^{\omega,\sva}}{\d \mu^{\sva}}\right].
\end{align*}
Applying Fubini's theorem, it therefore suffices to show that
\begin{equation}\label{fraclim}
\lim_{N\rightarrow \infty}\lim_{m\to\infty} \lim_{\sva\downarrow 0}\Et\left[\Eb\left[f_{N, m}^\sva\frac{\d \mu^{\omega,\sva}}{\d \mu^{\sva}}\right]\right]^{1/2}\Et\left[\Eb\left[(f_{N, m}^\sva)^{-1}\right]\right]^{1/2}=0.
\end{equation}
To bound $\Eb\big[f_{N, m}^\sva\frac{\d \mu^{\omega,\sva}}{\d \mu^{\sva}}\big]$, we will interpret $\frac{\d \mu^{\omega,\sva}}{\d \mu^{\sva}}$ as a change of measure for $\omega^\sva_\cdot$ as follows. Write
$$
W^\sva_\lambda:=\sum_{x\in\Omega_\sva}\sva\lambda(x)\omega_x^\sva\delta_x
$$
and, with a slight abuse of notation, $\langle\widetilde{\Phi}^\sva,W^\sva_\lambda\rangle = \sum_{x\in\Omega_\sva}\lambda_x^\sva\sigma_x\omega_x^\sva= \sum_{x\in\Omega_\sva}\sva^{7/8} \lambda(x)\sigma_x\omega_x^\sva
$. Recall that $\theta_\sva=e^{-\frac{1}{2}\sva^{-1/4}||\lambda||^2_{L^2}}$. By definition we have
\begin{align*}
\frac{\d \mu^{\omega,\sva}}{\d \mu^{\sva}}(\widetilde{\Phi}^\sva)
 = \frac{\theta_\sva}{\widetilde{Z}^{\omega,\sva}}\exp\Bigg(\sum_{x\in\Omega_\sva}\lambda_x^\sva\sigma_x\omega_x^\sva\Bigg)
 = \frac{\theta_\sva \exp\big(\langle\widetilde{\Phi}^\sva,W^\sva_\lambda\rangle\big)}{\widetilde{Z}^{\omega,\sva}}.
\end{align*}
We now consider the change of measure $\widetilde{\Pb}$ on the disorder defined by
\begin{equation}\label{e:CoM}
\frac{\d \widetilde{\Pb}}{\d \Pb}(\omega^\sva_\cdot)=\frac{\theta_\sva\exp\big(\langle \widetilde{\Phi}^\sva,W^\sva_\lambda\rangle\big)}{\Eb\Big[\theta_\sva\exp\big(\langle\widetilde{\Phi}^\sva,W^\sva_\lambda\rangle\big)\Big]},
\end{equation}
where the denominator tends to $1$ as $a\downarrow 0$. This change of measure shifts the disorder $(\omega^\sva_x)$ into a field of independent variables with unit variance but spin-dependent centring as is stated in Lemma \ref{l:oDist}. We omit the proof which follows immediately by manipulating the Laplace transforms.
\begin{lem}\label{l:oDist}
Under $\widetilde{\Pb}$ for $\sigma$ fixed, the family $\{\omega_x^\sva\}_{x\in\Omega_\sva}$ are independent Gaussian random variables with means $\kappa_x:=\lambda_x^\sva\sigma_x$ and unit variance.
\end{lem}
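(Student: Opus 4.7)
}
The plan is to follow the hint and compute the joint Laplace transform of $(\omega^\sva_x)_{x\in \Omega_\sva}$ under $\widetilde\Pb$ for fixed $\sigma$, matching it against the Laplace transform of an independent Gaussian family with the stated means and variances. Since $\widetilde\Pb$ is absolutely continuous with respect to $\Pb$ with an explicit density, and since under $\Pb$ the variables $(\omega^\sva_x)_{x\in \Omega_\sva}$ are i.i.d.\ standard Gaussians (by the paragraph preceding \eqref{Wom}), this reduces to a straightforward Gaussian computation.

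Concretely, the first step is to compute the normalising denominator in \eqref{e:CoM}. Writing $\langle\widetilde\Phi^\sva,W^\sva_\lambda\rangle = \sum_x \lambda^\sva_x \sigma_x \omega^\sva_x$ and using the independence of the $\omega^\sva_x$ together with $\sigma_x^2=1$, the Gaussian moment generating function gives
\[
\Eb\big[\exp(\langle \widetilde\Phi^\sva, W^\sva_\lambda\rangle)\big] = \prod_{x\in\Omega_\sva} \exp\big(\tfrac12 (\lambda^\sva_x)^2\big).
\]
Substituting this into \eqref{e:CoM} and cancelling the $\theta_\sva$ in numerator and denominator yields the factorised form
\[
\frac{\d\widetilde\Pb}{\d\Pb}(\omega^\sva_\cdot) = \prod_{x\in\Omega_\sva}\exp\Big(\lambda^\sva_x\sigma_x\omega^\sva_x - \tfrac12(\lambda^\sva_x)^2\Big).
\]
The next step is to multiply by $\exp(\sum_x t_x \omega^\sva_x)$ for arbitrary $(t_x)$, integrate against $\Pb$, and again use independence and the standard Gaussian MGF $\Eb[e^{s\omega^\sva_x}]=e^{s^2/2}$ to obtain
\[
\widetilde\Eb\Big[\exp\Big(\sum_{x} t_x \omega^\sva_x\Big)\Big] = \prod_{x\in\Omega_\sva}\exp\big(t_x \lambda^\sva_x\sigma_x + \tfrac12 t_x^2\big),
\]
which is precisely the joint Laplace transform of independent $\mathcal N(\lambda^\sva_x\sigma_x,1)$ variables. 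This identifies the conditional law and yields the claim.

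There is no real obstacle here: the argument is a textbook Cameron--Martin/Girsanov shift, made elementary by the fact that $(\omega^\sva_x)$ is a finite i.i.d.\ Gaussian family and the tilting exponent $\sum_x \lambda^\sva_x\sigma_x\omega^\sva_x$ is linear with coefficients depending only on the (here fixed) spin configuration $\sigma$. The only point worth double-checking is the cancellation of $\theta_\sva$ (so that one does not mistakenly absorb it as a nontrivial multiplicative factor), and the use of $\sigma_x^2=1$ in evaluating the quadratic term; both are immediate from the definitions.
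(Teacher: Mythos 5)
Your proof is correct and is exactly the argument the paper has in mind: the paper omits the proof, stating only that it ``follows immediately by manipulating the Laplace transforms,'' which is precisely the computation you carry out (factorising the tilting density using $\sigma_x^2=1$ and matching the joint MGF to that of independent $\mathcal N(\lambda^\sva_x\sigma_x,1)$ variables).
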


To prove \eqref{fraclim}, it suffices to find a suitable choice of $f_{N, m}^\sva$ such that
\begin{equation}\label{f-1}
\lim_{N\to\infty} \lim_{m\to\infty} \limsup_{\sva\downarrow 0}\Et\big[\Eb\big[(f_{N, m}^\sva)^{-1}\big]\big] <\infty,
\end{equation}
and
\begin{equation}\label{fNa}
\lim_{N\rightarrow \infty} \lim_{m\to\infty} \lim_{\sva\downarrow 0}\Et\left[\widetilde{\Eb}\left[f_{N, m}^\sva/\widetilde{Z}^{\omega,\sva}\right]\right]=0.
\end{equation}
For this, we need to choose $f_{N, m}^\sva$ to be typically close to $1$ but small on the rare events that $\frac{\d \mu^{\omega,\sva}}{\d \mu^{\sva}}$ is large.  We choose
\begin{equation}\label{fNadef}
f_{N, m}^\sva:=\exp\left(-S_N^\sva\ind_{\{X_{N,m}^\sva\geq M_N^\sva\}}\right)
\end{equation}
where $M_N^\sva, S_N^\sva$ are constant sequences to be chosen later, which do not depend on $m$ and diverge as $\sva\rightarrow0$ then $N\rightarrow\infty$, and $X_{N,m}^\sva=X_{N,m}^\sva(W^{\sva},\widetilde{\Phi}^\sva)$ is an approximation of $\langle\widetilde{\Phi}^\sva,W^\sva_\lambda\rangle$ which is $\Fc_{N, m}^\sva$ measurable. By doing this, we have the desired behaviour that $f_{N, m}^\sva$ is typically close to $1$ with respect to $\Pb$, but small when $\langle\widetilde{\Phi}^\sva,W^\sva_\lambda\rangle$ is large, which occurs with high probability with respect to $\widetilde{\Pb}$.

We wish to approximate the spin configuration $\sigma$ by random variables measurable with respect to $\Fc_{N,m}^\sva$. More precisely, we will replace $\sigma_x$ for $x\in B_{i,j}^{N,\sva}$ by the constant value
\begin{equation}\label{rhoN}
\rho_{i,j}^{N,\sva}:=\begin{cases}1, & \text{if } \sum_{x\in B_{i,j}^{N,\sva}} (\lambda^\sva_x)^2 \sigma_x  \geq0,\\
-1, & \text{if } \sum_{x\in B_{i,j}^{N,\sva}} (\lambda^\sva_x)^2 \sigma_x<0.\end{cases}
\end{equation}
Note that $(\rho_{i,j}^{N,\sva})^2=1$ which does not depend on $\widetilde{\Phi}^\sva$. We then approximate $\langle \widetilde{\Phi}^\sva,W^\sva_\lambda\rangle$ by
\begin{align}\label{XNa}
X_{N, m}^\sva:= \sva^{-1/8}\sum_{i,j=1}^N\rho_{i,j}^{N,\sva} W^{\sva,N,m}_{\lambda, i,j}
= \sva^{-1/8}\sum_{i,j=1}^N\rho_{i,j}^{N,\sva} 2^{-m} \Big\lfloor 2^m \sum_{x\in B_{i,j}^{N,\sva}}\sva \lambda_x^\sva \omega_x^\sva \Big\rfloor,
\end{align}
where $W^{\sva, N, m}_{\lambda, i, j}$ is defined as in \eqref{WPNmij}. Note that
\begin{equation}\label{XNa2}
\begin{gathered}
\lim_{m\to\infty} X_{N,m}^\sva  = X_{N}^\sva:= \sva^{-1/8}\sum_{i,j=1}^N\rho_{i,j}^{N,\sva} W^{\sva,N}_{\lambda, i,j}
= \sum_{i,j=1}^N\rho_{i,j}^{N,\sva} \sum_{x\in B_{i,j}^{N,\sva}} \lambda^\sva_x \omega_x^\sva, \\
\mbox{and} \qquad |X_{N,m}^\sva - X_{N}^\sva| \leq 2^{-m}N^2 \sva^{-1/8}.
\end{gathered}
\end{equation}

We are now ready to prove \eqref{f-1} for suitable choices of $S_N^\sva$ and $M_N^\sva$.
\begin{lem}\label{l:gBnd}
Suppose that $\lim\limits_{N\rightarrow \infty}\lim\limits_{\sva\downarrow 0}S_N^\sva=\infty$ and set $M_N^\sva:=\left(4S_N^\sva\sum_{x\in\widehat\Omega_\sva}(\lambda_x^\sva)^2\right)^{1/2}$. Then
\[\lim_{N\rightarrow \infty}\lim_{m\to\infty} \lim_{\sva\downarrow 0}\Eb\left[(f_{N, m}^\sva)^{-1}\right]<\infty.\]
\end{lem}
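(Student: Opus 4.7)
The plan is to bound $(f_{N,m}^\sva)^{-1}$ pointwise by $1+e^{S_N^\sva}\mathbf{1}_{\{X_{N,m}^\sva\geq M_N^\sva\}}$ and then control the probability of the rare event $\{X_{N,m}^\sva\geq M_N^\sva\}$ by a Gaussian tail estimate that is uniform in $\sigma$. The key first observation is that the signs $\rho_{i,j}^{N,\sva}\in\{\pm 1\}$ are functions of $\sigma$ only, independent of the disorder $\omega$; hence conditional on $\sigma$, the random variable
$X_N^\sva = \sum_{i,j}\rho_{i,j}^{N,\sva}\sum_{x\in B_{i,j}^{N,\sva}}\lambda_x^\sva\omega_x^\sva$
is a centered Gaussian under $\Pb$ with variance
$$V_\sva:=\sum_{i,j=1}^N(\rho_{i,j}^{N,\sva})^2\sum_{x\in B_{i,j}^{N,\sva}}(\lambda_x^\sva)^2=\sum_{x\in\widehat\Omega_\sva}(\lambda_x^\sva)^2,$$
using $(\rho_{i,j}^{N,\sva})^2=1$ and the fact that the boxes $B_{i,j}^{N,\sva}$ partition $\widehat\Omega_\sva$. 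Crucially, $V_\sva$ does not depend on $\sigma$.

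Next, I would combine the deterministic error bound $|X_{N,m}^\sva-X_N^\sva|\leq 2^{-m}N^2\sva^{-1/8}$ from \eqref{XNa2} with the standard Gaussian tail inequality to obtain
$$\Pb(X_{N,m}^\sva\geq M_N^\sva\mid\sigma)\leq\exp\!\left(-\frac{(M_N^\sva-2^{-m}N^2\sva^{-1/8})^2}{2V_\sva}\right).$$
The choice $M_N^\sva=2\sqrt{S_N^\sva V_\sva}$ is made precisely so that $(M_N^\sva)^2/(2V_\sva)=2S_N^\sva$, whence the right-hand side equals $\exp(-2S_N^\sva(1-\varepsilon_{N,m,\sva})^2)$ with
$$\varepsilon_{N,m,\sva}:=\frac{2^{-m-1}N^2}{\sqrt{S_N^\sva\,V_\sva\,\sva^{1/4}}}.$$

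The final step is to verify that $\varepsilon_{N,m,\sva}$ vanishes in the iterated limit $\sva\downarrow 0$, $m\to\infty$. A Riemann sum approximation gives $V_\sva\sva^{1/4}\to\|\lambda\|_{L^2(\widehat\Omega)}^2$ as $\sva\downarrow 0$, so $\varepsilon_{N,m,\sva}$ converges to $2^{-m-1}N^2/(\|\lambda\|_{L^2(\widehat\Omega)}\sqrt{S_N})$, which vanishes as $m\to\infty$ for fixed $N$. Assembling everything yields
$$\Eb[(f_{N,m}^\sva)^{-1}\mid\sigma]\leq 1+\exp\!\bigl(S_N^\sva-2S_N^\sva(1-\varepsilon_{N,m,\sva})^2\bigr)$$
uniformly in $\sigma$; the exponent is $-S_N^\sva(1+o(1))\to-\infty$ under the hypothesis that $S_N^\sva\to\infty$ in this iterated order, so the bound converges to $1<\infty$.

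The main obstacle is simply a bookkeeping one: the discretisation error $2^{-m}N^2\sva^{-1/8}$ in the definition of $X_{N,m}^\sva$ blows up as $\sva\downarrow 0$ for fixed $N,m$, and must be shown to be negligible compared to the threshold $M_N^\sva\sim 2\sqrt{S_N^\sva}\sva^{-1/8}\|\lambda\|_{L^2}$. Since both scale like $\sva^{-1/8}$, their ratio is $\sva$-independent in the limit and can be made arbitrarily small by sending $m\to\infty$ afterwards; this is precisely why the lemma is formulated with the $\sva$-limit taken before the $m$-limit.
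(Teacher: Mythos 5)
Your proposal is correct and follows essentially the same route as the paper's proof: bound $(f_{N,m}^\sva)^{-1}$ by $1+e^{S_N^\sva}\ind_{\{X_{N,m}^\sva\geq M_N^\sva\}}$, absorb the discretisation error $2^{-m}N^2\sva^{-1/8}$ into the threshold, use that $X_N^\sva$ is (conditionally on $\sigma$, with $\sigma$-independent variance $\sum_{x\in\widehat\Omega_\sva}(\lambda^\sva_x)^2$ thanks to $(\rho^{N,\sva}_{i,j})^2=1$) a centred Gaussian, and conclude by a Gaussian tail estimate and the choice of $M_N^\sva$. Your explicit conditioning on $\sigma$ and the observation that the bound is uniform in $\sigma$ is if anything slightly more careful than the paper's phrasing.
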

\begin{proof}
By the definition of $f_{N, m}^\sva$ and the bound in \eqref{XNa2}, we have that
\begin{align*}
\Eb\left[(f_{N, m}^\sva)^{-1}\right]
&=1+(e^{S_N^\sva}-1)\Pb(X_{N,m}^\sva>M_N^\sva)\\
&\leq 1+(e^{S_N^\sva}-1)\Pb\big(X_{N}^\sva>M_N^\sva - 2^{-m}N^2 \sva^{-1/8}\big).
\end{align*}
From the definition of $X^\sva_N$ in \eqref{XNa2} and our choice of $\omega^\sva_x$, we note that $X_N^\sva$ is a centred Gaussian random variable with variance
\[
\sum_{i,j=1}^N\Big(\rho_{i,j}^{N,\sva}\Big)^2\sum_{x\in B_{i,j}^{N,\sva}}(\lambda_x^\sva)^2=\sum_{x\in\widehat\Omega_\sva}(\lambda_x^\sva)^2
= \sva^{-1/4} (1+o(1)) \Vert \lambda\Vert_{L^2(\widehat\Omega)}^2.
\]
It then follows by a Gaussian tail estimate that
\begin{align*}
&\Pb\big(X_N^\sva>M_N^\sva- 2^{-m}N^2 \sva^{-1/8}\big)\\
\leq\, & \exp\left(-\frac{(M_N^\sva - 2^{-m}N^2 \sva^{-1/8})^2}{2\sum_{x\in\widehat\Omega_\sva}(\lambda_x^\sva)^2}\right)\cdot \frac{\left(\sum_{x\in\widehat\Omega_\sva}(\lambda_x^\sva)^2\right)^{1/2}}{(M_N^\sva - 2^{-m}N^2 \sva^{-1/8}) \sqrt{2\pi}}.
\end{align*}
In particular, by our choice of $M_N^\sva$ we have the desired result.
\end{proof}

We are now ready to prove \eqref{fNa} and thus conclude the proof of \eqref{e:Null}.
\begin{lem}\label{l:gConv}
There exists $S_N^\sva$ satisfying $\lim_{N\rightarrow \infty}\lim_{\sva\downarrow 0}S_N^\sva=\infty$ such that
\[\lim_{N\rightarrow \infty}\lim_{m\to\infty} \lim_{\sva\rightarrow 0}\Et\left[\widetilde{\Eb}\left[f_{N, m}^\sva/\widetilde{Z}^{\omega,\sva}\right]\right]=0\]
where in the definition of $f_{N, m}^\sva$ in \eqref{fNadef}, $M_N^\sva:=\left(4S_N^\sva\sum_{x\in\widehat\Omega_\sva}(\lambda_x^\sva)^2\right)^{1/2}$.
\end{lem}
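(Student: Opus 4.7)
The plan is to reduce the quantity $\Et\widetilde\Eb[f^\sva_{N,m}/\widetilde Z^{\omega,\sva}]$ to a tail probability of $X^\sva_{N,m}$ under the tilted disorder law $\widetilde\Pb$, then exploit that $X^\sva_N$ is conditionally Gaussian under $\widetilde\Pb$ with a conditional mean controlled by $\sum_{i,j}|\widetilde{\Phi}^{\sva,N}_{\lambda,i,j}|$, and finally show that this $\ell^1$-type observable diverges in $\Pt$-probability as $N\to\infty$.

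First, by Fubini, the definition of $\widetilde\Pb$ in \eqref{e:CoM}, the identity $\Et[g\,e^{\sum_x \lambda_x^\sva \omega_x^\sva \sigma_x}] = Z^{\omega,\sva}\Et^{\omega,\sva}[g]$, and the fact that $D^\sva := \Eb[\theta_\sva e^{\sum_x \lambda_x^\sva \omega_x^\sva \sigma_x}]$ is $\sigma$-independent and converges to $1$ by Lemma \ref{l:Theta}, one verifies that $\Et\widetilde\Eb[g/\widetilde Z^{\omega,\sva}] = (D^\sva)^{-1}\Eb\Et^{\omega,\sva}[g]$ for any measurable $g$. Applying this with $g=1$ and $g=1/\widetilde Z^{\omega,\sva}$ gives, respectively, $\Et\widetilde\Eb[1/\widetilde Z^{\omega,\sva}] = (D^\sva)^{-1}$ and $\Et\widetilde\Eb[1/(\widetilde Z^{\omega,\sva})^2] = (D^\sva)^{-1}\Eb[1/\widetilde Z^{\omega,\sva}]$, both uniformly bounded in $\sva$ (the latter by Lemma \ref{l:mom}). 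Decomposing $f^\sva_{N,m} \leq e^{-S^\sva_N} + \ind_{X^\sva_{N,m} < M^\sva_N}$ and applying Cauchy-Schwarz then yields
\begin{equation*}
\Et\widetilde\Eb[f^\sva_{N,m}/\widetilde Z^{\omega,\sva}] \leq (D^\sva)^{-1}e^{-S^\sva_N} + \Et\widetilde\Pb(X^\sva_{N,m}<M^\sva_N)^{1/2}\,\Et\widetilde\Eb[1/(\widetilde Z^{\omega,\sva})^2]^{1/2}.
\end{equation*}
It thus suffices to arrange $S^\sva_N \to \infty$ and $\Et\widetilde\Pb(X^\sva_{N,m} < M^\sva_N) \to 0$ in the iterated limit.

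By Lemma \ref{l:oDist}, conditionally on $\sigma$, $X^\sva_N$ is Gaussian under $\widetilde\Pb$ with variance $V_\sva = \sum_x (\lambda^\sva_x)^2 = (1+o(1))\sva^{-1/4}\Vert\lambda\Vert_{L^2(\widehat\Omega)}^2$ and mean
\begin{equation*}
\widetilde\Eb[X^\sva_N\mid\sigma] = \sum_{i,j}\rho^{N,\sva}_{i,j}\sum_{x\in B^{N,\sva}_{i,j}}(\lambda^\sva_x)^2\sigma_x = \sva^{-1/8}\sum_{i,j}|\widetilde{\Phi}^{\sva,N}_{\lambda,i,j}|,
\end{equation*}
where the second equality uses the choice of $\rho^{N,\sva}_{i,j}$ in \eqref{rhoN}. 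Combined with the deterministic bound $|X^\sva_{N,m}-X^\sva_N|\leq 2^{-m}N^2\sva^{-1/8}$ and a standard Gaussian tail estimate, on the event $\mathcal{G} := \{\sum_{i,j}|\widetilde{\Phi}^{\sva,N}_{\lambda,i,j}| \geq 4(S^\sva_N)^{1/2}\Vert\lambda\Vert_{L^2(\widehat\Omega)}\}$ and for $m$ sufficiently large relative to $N,\sva$, one obtains $\widetilde\Pb(X^\sva_{N,m}<M^\sva_N\mid\sigma)\leq e^{-S^\sva_N/2}$; hence $\Et\widetilde\Pb(X^\sva_{N,m}<M^\sva_N) \leq \Pt(\mathcal{G}^c) + e^{-S^\sva_N/2}$.

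Choosing $S^\sva_N$ slowly divergent, e.g.\ $S^\sva_N = \log\log N$, ensures $(S^\sva_N)^{1/2} = o(N^{1/8})$ and $e^{-S^\sva_N/2}\to 0$. The remaining task is $\Pt(\mathcal{G}^c)\to 0$. By Lemma \ref{l:aprox}, $\sum_{i,j}|\widetilde{\Phi}^{\sva,N}_{\lambda,i,j}|$ converges in distribution to $\sum_{i,j}|\Phi^N_{\lambda,i,j}|$ as $\sva\downarrow 0$ and $m\to\infty$, so one needs $\sum_{i,j}|\Phi^N_{\lambda,i,j}|\to\infty$ in $\Pt$-probability as $N\to\infty$. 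A Paley-Zygmund argument using the second and fourth moment bounds that follow from Lemma \ref{l:spin} yields $\Et[|\Phi^N_{\lambda,i,j}|]\gtrsim N^{-15/8}$ on bulk boxes, whence $\Et[\sum_{i,j}|\Phi^N_{\lambda,i,j}|]\gtrsim N^{1/8}$ which diverges. The main obstacle is to upgrade this divergence of the mean to divergence in probability: this requires covariance decay for the absolute values $|\Phi^N_{\lambda,i,j}|$ beyond the trivial Cauchy-Schwarz bound, which can be achieved via a multi-scale decomposition into widely separated sub-blocks on which, by the CFT decay of the connected four-point function of the $2$D critical Ising model, the integrated magnetisations become approximately independent and the law of large numbers applies.
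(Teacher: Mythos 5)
Your reduction is sound and essentially parallels the paper's: you tilt the disorder via \eqref{e:CoM}, reduce to a lower-tail estimate for $X^\sva_{N,m}$ under $\widetilde\Pb$, compute that conditionally on $\sigma$ the variable $X^\sva_N$ is Gaussian with variance $\sim \sva^{-1/4}\Vert\lambda\Vert^2_{L^2(\widehat\Omega)}$ and mean $\sva^{-1/8}\sum_{i,j}|\widetilde{\Phi}^{\sva,N}_{\lambda,i,j}|$, and correctly observe that everything hinges on the divergence in $\Pt$-probability of $\sum_{i,j}|\widetilde{\Phi}^{\sva,N}_{\lambda,i,j}|$. Your handling of the $1/\widetilde Z^{\omega,\sva}$ factor (Cauchy--Schwarz plus the uniform bound on $\Eb[1/\widetilde Z^{\omega,\sva}]$ from the negative-moment lemma, valid here since the disorder is Gaussian) is a legitimate variant of the paper's splitting on $\{\widetilde Z^{\omega,\sva}\geq\varepsilon\}$, which only needs that $\Zc^W$ has no atom at $0$; both work.

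The genuine gap is the last step, which is precisely Lemma \ref{l:IConv} and is the hard part of this proof. Your first-moment computation $\Et[|\Phi^N_{\lambda,i,j}|]\gtrsim N^{-15/8}$ is correct, but as you yourself note, divergence of the mean of $\sum_{i,j}|\Phi^N_{\lambda,i,j}|$ does not give divergence in probability, and the proposed fix --- ``covariance decay for the absolute values \dots via the CFT decay of the connected four-point function'' so that ``the integrated magnetisations become approximately independent'' --- is not a proof and does not obviously work. Decay of connected polynomial correlations between blocks does not transfer to decorrelation of the non-polynomial functionals $|\Phi^N_{\lambda,i,j}|$; to control $\mathrm{Cov}(|\Phi^N_{\lambda,i,j}|,|\Phi^N_{\lambda,i',j'}|)$ one needs control of the joint law, not just mixed moments, and no such estimate is supplied. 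The paper avoids this entirely: after rescaling each box to unit size with mesh $\sva N$, it uses the RSW estimate to find, with high probability, order $N^2$ disjoint annuli each containing a $+$ circuit; conditionally on these circuits the enclosed block magnetisations are \emph{exactly} independent by the domain Markov property, each is bounded below by a constant with uniformly positive probability (FKG for the first moment, the convergence of one- and two-point functions from \cite{chhoiz15} for the Paley--Zygmund second-moment bound), and a law of large numbers for independent variables gives $\sum_{i,j}N^{-15/8}|Y^{N,\sva}_{i,j}|\gtrsim N^{1/8}\to\infty$. This circuit/conditional-independence mechanism is the missing idea; without it (or a genuine substitute) your argument does not close.
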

\begin{proof}
First note that for $\varepsilon>0$
\begin{align}\label{fdZ}
\widetilde{\Eb}\left[f_{N, m}^\sva/\widetilde{Z}^{\omega,\sva}\right]
\leq \frac{1}{\varepsilon}\widetilde{\Eb}\left[f_{N, m}^\sva\ind_{\widetilde{Z}^{\omega,\sva}\geq \varepsilon}\right]
+ \widetilde{\Eb}\left[\ind_{\widetilde{Z}^{\omega,\sva}<\varepsilon}/\widetilde{Z}^{\omega,\sva}\right].
\end{align}
By Fubini's theorem and the change of measure \eqref{e:CoM}, we have that
\begin{align*}
\Et\left[\widetilde{\Eb}\left[\ind_{\widetilde{Z}^{\omega,\sva}<\varepsilon}/\widetilde{Z}^{\omega,\sva}\right]\right]
 = (1+o(1))\Pb(\widetilde{Z}^{\omega,\sva}<\varepsilon),
\end{align*}
which can be made arbitrarily small uniformly in $\sva>0$ by choosing $\varepsilon>0$ sufficiently small since the weak limit of $\widetilde Z^{\omega, \sva}$, $\Zc^W$, does not have an atom at $0$ by Lemma \ref{l:mom}.

We then have that
\[
\widetilde{\Eb}\left[f_{N, m}^\sva\ind_{\widetilde{Z}^{\omega,\sva}\geq \varepsilon}\right]\leq \widetilde{\Eb}\left[f_{N, m}^\sva\right]=1+(e^{-S_N^\sva}-1)\widetilde{\Pb}(X_{N,m}^\sva>M_N^\sva) \leq \widetilde{\Pb}(X_{N,m}^\sva\leq M_N^\sva) + e^{-S_N^\sva}.
\]
Since $|X^\sva_{N,m}-X^\sva_N|\leq 2^{-m}N^2 \sva^{-1/8} =:C_{N,m}\sva^{-1/8}$ by \eqref{XNa2}, to conclude the proof, it suffices to show that
$$
\lim_{N\rightarrow \infty}\lim_{m\to\infty}\lim_{\sva\downarrow 0}\Et\left[\widetilde{\Pb}(X_{N}^\sva\leq M_N^\sva+C_{N,m}\sva^{-1/8})\right]=0.
$$

By Lemma \ref{l:oDist}, $\omega_x^\sva$ are independent Gaussian random variables with mean $\lambda_x^\sva\sigma_x$ and unit variance with respect to $\widetilde{\Pb}$. It follows that $X_N^\sva$ is a Gaussian random variable with mean and variance given by
\begin{align*}
m_{N,\sva} = \sum_{i,j=1}^N\rho_{i,j}^{N,\sva}\sum_{x\in B_{i,j}^{N,\sva}}(\lambda_x^\sva)^2\sigma_x, \qquad
s_{N,\sva}^2 = \sum_{x\in \widehat\Omega_\sva}(\lambda_x^\sva)^2 = \sva^{-1/4} (1+o(1)) \Vert \lambda\Vert_{L^2(\widehat\Omega)}^2
\end{align*}
respectively where we have used that $(\rho^{N, \sva}_{i,j})^2=1$.

If $M_N^\sva + C_{N,m}\sva^{-1/8} < m_{N,\sva}$, then by a Gaussian tail estimate,  we have that
\[\widetilde{\Pb}(X_N^\sva\leq M_N^\sva +C_{N,m}\sva^{-1/8})\leq \frac{\exp\left(-\frac{1}{2}\left(\frac{m_{N,\sva}-M_N^\sva-C_{N,m}\sva^{-1/8}}{s_{N,\sva}}\right)^2\right)}{\sqrt{2\pi}\frac{m_{N,\sva}-M_N^\sva-C_{N,m}\sva^{-1/8}}{s_{N,\sva}}}.\]
It therefore suffices to show that $(m_{N,\sva}-M_N^\sva-C_{N,m}\sva^{-1/8})/s_{N,\sva}$ tends to $\infty$ in $\Pt$-probability as $\sva\downarrow 0$, then $m\to\infty$ followed by $N\rightarrow \infty$.

Recalling the definitions of $M_N^\sva$ and $s_{N,\sva}$ and that $C_{N,m}=2^{-m}N^2$, we have that $M_N^\sva/s_{N,\sva}=(4S_N^\sva)^{1/2}$ and $\lim_{m\to\infty}\lim_{\sva\downarrow 0}C_{N,m}\sva^{-1/8}/s_{N, \sva}=0$.  Therefore, since we can choose $S_N^\sva$ tending to $\infty$ as slowly as we like, it suffices to show that $m_{N,\sva}/s_{N,\sva}$ tends to $\infty$, or equivalently,
\[
\sva^{\frac18} m_{N, \sva} = \sva^{\frac18}\sum_{i,j=1}^N\rho_{i,j}^{N,\sva}\sum_{x\in B_{i,j}^{N,\sva}}(\lambda_x^\sva)^2\sigma_x= \sum_{i,j=1}^N\Bigg|\sum_{x\in B_{i,j}^{N,\sva}}\sva^{\frac{15}{8}}\lambda(x)^2\sigma_x\Bigg| \to \infty
\]
as $\sva\downarrow 0, \, m\to\infty,\, N\to\infty$
where we have used the definition that $\rho^{N, \sva}_{i,j}$ is the sign of $\sum_{x\in B_{i,j}^{N,\sva}}(\lambda_x^\sva)^2\sigma_x$.
This divergence follows from Lemma \ref{l:IConv} below.
\end{proof}

\begin{lem}\label{l:IConv}
In $\Pt^\sva$-distribution,
\[\lim_{N\rightarrow \infty}\lim_{\sva\rightarrow 0}\sum_{i,j=1}^N\Bigg|\sum_{x\in B_{i,j}^{N,\sva}}\sva^{15/8}\lambda(x)^2\sigma_x\Bigg|= \infty.\]
\end{lem}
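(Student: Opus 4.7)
The plan is to pass to the continuum limit and then establish divergence of the $L^1$-style dyadic partition norm of the continuum magnetisation field via moment estimates.

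First, by Lemma \ref{l:aprox} the random vector $\bigl(\sum_{x\in B_{i,j}^{N,\sva}}\sva^{15/8}\lambda(x)^2\sigma_x\bigr)_{i,j=1}^N$ converges jointly in distribution as $\sva\downarrow 0$ to $(X_{i,j}^N)_{i,j=1}^N$, where I set $X_{i,j}^N := \langle\Phi, \lambda^2 \ind_{B_{i,j}^N}\rangle$ with $\Phi\sim\mu_\Omega$. The continuous mapping theorem applied to the map $y\mapsto \sum_{i,j}|y_{i,j}|$ then reduces the claim to showing that $Y_N := \sum_{i,j=1}^N |X_{i,j}^N| \to \infty$ in $\Pt$-probability as $N\to\infty$.

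For the second moment, the $L^2$ convergence in Lemma \ref{l:spin} gives
\begin{equation*}
\Et[(X_{i,j}^N)^2] \;=\; \Cc^2 \iint_{B_{i,j}^N \times B_{i,j}^N} \lambda^2(x)\lambda^2(y)\,\phi_\Omega^+(x,y)\,dx\,dy,
\end{equation*}
which, since $\phi_\Omega^+(x,y) \asymp |x-y|^{-1/4}$ on the diagonal in any compact interior subdomain of $\Omega$ and $\lambda\geq \lambda_{\min}>0$, is comparable to $N^{-15/4}$ for boxes well inside $\Omega$. To lower bound $\Et[|X_{i,j}^N|]$ I would apply Paley--Zygmund to $(X_{i,j}^N)^2$, which requires the matching fourth moment bound $\Et[(X_{i,j}^N)^4] \leq C (\Et[(X_{i,j}^N)^2])^2 \asymp N^{-15/2}$. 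Granting this, Paley--Zygmund yields $\Et[|X_{i,j}^N|] \gtrsim N^{-15/8}$, and summing over the $\Theta(N^2)$ interior boxes gives $\Et[Y_N] \gtrsim N^{1/8}\to\infty$.

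To upgrade divergence of the mean to convergence in probability, I would control $\Var(Y_N)$ by splitting the double sum into nearby and well-separated pairs of boxes. For well-separated boxes, the truncated two-point function $\phi_\Omega^+(x,y)-\phi_\Omega^+(x)\phi_\Omega^+(y)$ decays by conformal covariance and, combined with Cauchy--Schwarz for the remaining nearby pairs, yields $\Var(Y_N)=o(\Et[Y_N]^2)$, so that Chebyshev's inequality concludes $Y_N\to\infty$ in $\Pt$-probability.

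The principal technical obstacle will be the fourth-moment bound, since Lemma \ref{l:spin} supplies only an $L^2$ bound on the kernel $f_\Omega$, which alone yields $\iiiint_{B^4}|f_\Omega| \lesssim N^{-4}$ via Cauchy--Schwarz---much weaker than the $N^{-15/2}$ scaling one needs. One must invoke the explicit CFT form of the Ising four-point function from \cite{chhoiz15}, or at least its scaling covariance $\phi_\Omega^+(rx_1+x_0,\dots,rx_4+x_0) \sim r^{-1/2}\phi_{\mathbb C}^+(x_1,\dots,x_4)$ in the interior of $\Omega$. A cleaner alternative would be to verify a lattice-level Lebowitz-type inequality $\Et^\sva[\sigma_1\sigma_2\sigma_3\sigma_4] \leq \sum_{\pi}\Et^\sva[\sigma_a\sigma_b]\Et^\sva[\sigma_c\sigma_d]$ valid under $+$ boundary conditions at criticality, but classical forms of such inequalities hold only for free or symmetric boundary conditions, so this route would require separate justification.
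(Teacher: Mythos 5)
Your proposal correctly identifies the $N^{1/8}$ divergence rate and the second-moment flavour of the argument, but it has gaps beyond the fourth-moment bound you flag. First, even granting $\Et[(X_{i,j}^N)^4]\lesssim N^{-15/2}$, your route also needs the matching \emph{lower} bound $\Et[(X_{i,j}^N)^2]\gtrsim N^{-15/4}$, i.e.\ a lower bound $\phi_\Omega^+(x,y)\gtrsim |x-y|^{-1/4}$ on interior boxes; Lemma \ref{l:spin} supplies only upper bounds and $L^2$ convergence, so this is an additional input (obtainable from RSW or the explicit formulas, but not free). Second, and more seriously, the concentration step fails as written: decay of the truncated two-point function $\phi_\Omega^+(x,y)-\phi_\Omega^+(x)\phi_\Omega^+(y)$ controls $\mathrm{Cov}(X_{i,j}^N,X_{k,l}^N)$, not $\mathrm{Cov}(|X_{i,j}^N|,|X_{k,l}^N|)$ — the absolute value is a nonlinear, non-monotone functional of the spins, so neither FKG-type correlation inequalities nor the field's covariance give you the needed bound. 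Note that the trivial Cauchy--Schwarz bound gives $\mathrm{Cov}(|X_{i,j}^N|,|X_{k,l}^N|)\lesssim N^{-15/4}$ per pair, hence $\Var(Y_N)\lesssim N^{1/4}\asymp \Et[Y_N]^2$: you are exactly at the borderline, so you genuinely need decay in the separation for the absolute values, and no mechanism in your sketch delivers it.

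The paper's proof avoids all three issues by a different device. It rescales each box $B_{i,j}^{N,\sva}$ to unit size (mesh $\sva N$) and uses RSW to find, in a positive fraction of disjoint surrounding annuli, a $+$ circuit. Conditioning on these circuits makes the box magnetisations genuinely independent, which replaces your variance estimate entirely. Moreover, because the conditioning is on a $+$ loop, the \emph{signed} conditional expectation of the box magnetisation is bounded below by FKG and the one-point function convergence of \cite{chhoiz15}, so $\Et[\,|M|\,]\ge\Et[M]\gtrsim 1$ at unit scale; Paley--Zygmund is then applied to $Y$ itself using only first and second conditional moments (the second moment coming from \cite[Proposition 3.9]{cagane15} and the two-point function), with no fourth moments and no lower bound on $\phi_\Omega^+(x,y)$ needed. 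Without some decoupling of this kind (or a genuinely new argument for the covariance of absolute values), your proof does not close.
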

\begin{proof}
We first note that the sum we are interested in is equal to
\[\sum_{i,j=1}^NN^{-15/8}\Bigg|\sum_{x\in B_{i,j}^{N,\sva}}(\sva N)^{15/8}\lambda(x)^2\sigma_x\Bigg|
\ed \sum_{i,j=1}^NN^{-15/8}\Bigg|\sum_{x\in \widetilde{B}_{i,j}^{N,\sva}}\sva_N^{15/8}\lambda_N(x)^2\widetilde{\sigma}_x\Bigg|\]
where we have rescaled space such that $\widetilde{B}_{i,j}^{N,\sva}$ is the discretisation of the box $[i-1,i]\times[j-1,j]$ with mesh size $\sva_N=\sva N$, $\lambda_N(x)=\lambda(x/N)$ and $\widetilde{\sigma}$ is an Ising configuration on $\cup  \widetilde{B}_{i,j}^{N,\sva}$. Write
\begin{equation}
Y_{i,j}^{N,\sva}:=\Bigg|\sum_{x\in\widetilde{B}_{i,j}^{N,\sva}} \sva_N^{15/8}\lambda_N(x)^2\widetilde{\sigma}_x\Bigg|
\end{equation}
then we will show that $(Y_{i,j}^{N,\sva})$ stochastically dominates a family which obeys a law of large numbers.

Write $D_{i,j}^{N,\sva}:=[i-2,i+1]\times[j-2,j+1]$ for the box of side length $3$ with $\widetilde{B}_{i,j}^{N,\sva}$ at the centre. We can choose a set $\red{I_N}\subset \{(i,j):1\leq i,j\leq N\}$ of size at least $(N-2)^2/9$ such that the boxes $(D_{i,j}^{N,\sva})_{(i,j)\in I^N}$ are contained within $[0,N]^2$ and do not intersect each other. This gives a collection of disjoint annuli $A_{i,j}^{N,\sva}:=D_{i,j}^{N,\sva}\setminus\widetilde{B}_{i,j}^{N,\sva}$ with side length $3$ and the centre box of side length $1$ removed.

We wish to show that, with high probability, a positive proportion of these annuli contain a $+$ Ising loop. Let
\begin{equation}
\Ac_{i,j}^N:=\{(i,j)\in I_N: \partial\widetilde{B}_{i,j}^{N,\sva} \stackrel{-}{\nleftrightarrow} \partial D_{i,j}^{N,\sva} \}
\end{equation}
denote the event that $\widetilde{B}_{i,j}^{N,\sva}$ is surrounded by a $+$ loop in the annulus $A_{i,j}^{N,\sva}$. A consequence of the RSW result \cite{duhono11} (see also \cite[(3.3)]{cagane16}) is that, for $\sva>0$ suitably small, there exists a constant $p>0$ independent of the boundary condition $\xi$ on $D_{i,j}^{N,\sva}$ such that
\begin{equation*}
\Pt^{\sva,\xi}_{D_{i,j}^{N,\sva}}(\Ac_{i,j}^N)>p.
\end{equation*}
The number of disjoint annuli that contain such a loop stochastically dominates a binomial random variable with $(N-2)^2/9$ trials and success probability $p$. In particular, with high probability, there are at least $cN^2$ such loops for some $c>0$ and all $N$ large. Write $\widetilde{I}_N\subset I_N$ to be the collection with such a loop and $\gamma_{i,j}$ the loop corresponding to $(i,j)$ where we choose any such loop if there are more then one.

%

Conditioned on these loops, the random variables $Y_{i,j}^{N,\sva}$ for $(i,j)\in\widetilde{I}_N$ are independent and positive. It therefore suffices to show that $\Pt^{\sva}(Y_{i,j}^{N,\sva}>c_1|(i,j)\in\widetilde{I}_N,\gamma_{i,j})>c_2$ for some $c_1,c_2>0$ which are independent of the pair $(i,j)$ and the loop $\gamma_{i,j}$. By the Paley-Zygmund inequality
\begin{align*}
&\Pt^\sva\left(Y_{i,j}^{N,\sva}>\frac{1}{2}\Et^\sva\left[Y_{i,j}^{N,\sva}\big|(i,j)\in\widetilde{I}_N,\gamma_{i,j}\right]  \Big|  (i,j)\in\widetilde{I}_N,\gamma_{i,j}\right)\\
\geq\, &
\frac{\Et^\sva\left[Y_{i,j}^{N,\sva}\big|(i,j)\in\widetilde{I}_N,\gamma_{i,j}\right]^2}{4\Et^\sva\left[(Y_{i,j}^{N,\sva})^2\big|(i,j)\in\widetilde{I}_N,\gamma_{i,j}\right]},
\end{align*}
therefore it suffices to show that independently of the pair $(i,j)$ and the loop $\gamma_{i,j}$ we have that $\Et[Y_{i,j}^{N,\sva}|(i,j)\in\widetilde{I}_N,\gamma_{i,j}]$ is bounded below and $\Et^\sva[(Y_{i,j}^{N,\sva})^2|(i,j)\in\widetilde{I}_N,\gamma_{i,j}]$ is bounded above.

For the lower bound, by the FKG inequality, we have that
\begin{align*}
\Et^\sva\left[Y_{i,j}^{N,\sva}\big|(i,j)\in\widetilde{I}_N,\gamma_{i,j}\right]
&\geq \sum_{x\in \widetilde{B}_{i,j}^{N,\sva}}\sva_N^{15/8}\lambda_N(x)^2\Et^\sva\left[\widetilde{\sigma}_x\big|(i,j)\in\widetilde{I}_N,\gamma_{i,j}\right]\\
&\geq  \inf_{x\in\Omega}\lambda(x)^2\sum_{x\in \widetilde{B}_{i,j}^{N,\sva}}\sva^{15/8}_N\Et^{\sva,+}_{D_{i,j}^{N,\sva}}\left[\widetilde{\sigma}_x\right]
\end{align*}
which is bounded below by the convergence result \cite[Theorem 1.2]{chhoiz15}.

For the upper bound we have that
\begin{align*}
\Et^\sva\left[(Y_{i,j}^{N,\sva})^2  \Big|  (i,j)\in\widetilde{I}_N,\gamma_{i,j}\right]
&=\sum_{x,y\in\widetilde{B}_{i,j}^{N,\sva}}\sva_N^{15/4}\lambda_N(x)^2\lambda_N(y)^2\Et^\sva\left[\widetilde{\sigma}_x\widetilde{\sigma}_y \Big|  (i,j)\in\widetilde{I}_N,\gamma_{i,j}\right] \\
&\leq  \Vert\lambda\Vert^4_{\infty}\sum_{x,y\in \widetilde{B}_{i,j}^{N,\sva}}\sva_N^{15/4}\Et^{\sva,+}_{\widetilde{B}_{i,j}^{N,\sva}}\left[\widetilde{\sigma}_x\widetilde{\sigma}_y\right]
\end{align*}
which is bounded above uniformly in $\sva_N$ by \cite[Proposition 3.9]{cagane15} and \cite[Theorem 1.1]{chhoiz15}.
\end{proof}

\begin{appendix}
\section{Convergence of Radon-Nikodym derivatives} \label{s:Radon}

In this appendix, we prove an elementary measure theoretic result needed in the proof.

\begin{lem}\label{l:Radon}
Let $(\mu_n)_{n\in\Nb}$ and $(\nu_n)_{n\in\Nb}$ be two sequences of probability measures on a complete separable metric space $B$ equipped with Borel $\sigma$-algebra $\Bc$. Suppose that $\mu_n\Rightarrow \mu$ and $\nu_n\Rightarrow \nu$, and furthermore, $\nu_n$ is absolutely continuous with respect to $\mu_n$ with Radon-Nikodym derivative $f_n(x)$ such that $\{f_n \in L^1(B, \Bc, \mu_n)\}_{n\in\Nb}$ are uniformly integrable. Then $\nu$ is absolutely continuous with respect to $\mu$.
\end{lem}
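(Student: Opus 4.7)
The plan is to reduce the claim to a domination inequality of the form $\nu(F) \leq M\mu(F) + \varepsilon$ on closed sets $F$, where $M$ comes from truncating the densities $f_n$ and $\varepsilon$ measures the uniformly integrable tail. The key observation is that, while weak limits of the $\nu_n$ and $\mu_n$ individually give only one-sided Portmanteau inequalities, the relation $d\nu_n = f_n d\mu_n$ survives integration against a bounded continuous test function, and can be controlled through the uniform integrability hypothesis.

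Concretely, first I would use uniform integrability to pick, for arbitrary $\varepsilon>0$, a truncation level $M=M(\varepsilon)$ such that $\int_{\{f_n>M\}} f_n\, d\mu_n < \varepsilon$ uniformly in $n$. Then for any bounded continuous $g:B\to[0,\infty)$ with $\Vert g\Vert_\infty\leq 1$, splitting $f_n = (f_n\wedge M) + (f_n-M)^+$ gives
\[
\int g\, d\nu_n = \int g f_n\, d\mu_n \leq M\int g\, d\mu_n + \varepsilon.
\]
Since $\mu_n\Rightarrow \mu$ and $\nu_n\Rightarrow \nu$, passing to the limit $n\to\infty$ yields $\int g\, d\nu \leq M\int g\, d\mu + \varepsilon$ for every such $g$.

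Next, given a closed set $F\subset B$, I would approximate $\ind_F$ from above by a decreasing sequence of $[0,1]$-valued continuous functions $g_k$ (available on the metric space $B$ via e.g.\ $g_k(x) = (1-k\cdot d(x,F))^+$). Monotone/dominated convergence then yields $\nu(F)\leq M\mu(F) + \varepsilon$ for every closed $F$ and every $\varepsilon>0$ (with $M$ depending on $\varepsilon$).

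Finally, let $A\in\Bc$ satisfy $\mu(A)=0$. Since $B$ is a complete separable metric space, the finite Borel measure $\nu$ is inner regular by closed (indeed compact) sets, so $\nu(A) = \sup\{\nu(F):F\subset A \text{ closed}\}$. Every closed $F\subset A$ satisfies $\mu(F)\leq \mu(A)=0$, and therefore $\nu(F)\leq \varepsilon$ for every $\varepsilon>0$, giving $\nu(F)=0$ and hence $\nu(A)=0$. This yields $\nu\ll \mu$. The only subtle step is the first: one must make sure the uniform integrability statement, which lives on the varying probability spaces $(B,\mu_n)$, is really a uniform control in $n$ after pairing with a fixed bounded continuous $g$, but this is immediate from the pointwise splitting above.
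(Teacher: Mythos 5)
Your proof is correct, but it takes a genuinely different route from the paper's. The paper argues probabilistically: it realises $\mu_n$ as the law of a random variable $X_n$, sets $Y_n = f_n(X_n)$ (so that $\int g\,\d\nu_n = \Eb[g(X_n)Y_n]$), extracts a convergent subsequence of the tight pairs $(X_n,Y_n)$, and applies Skorokhod's representation theorem to pass to an almost sure limit $(X,Y)$; uniform integrability then gives $\Eb[g(X_n)Y_n]\to\Eb[g(X)Y]$, and the density of $\nu$ with respect to $\mu$ is identified as $\Eb[Y\,|\,X]$. Your argument instead truncates the densities at a uniform level $M(\varepsilon)$, passes the resulting inequality $\int g\,\d\nu_n \leq M\int g\,\d\mu_n + \varepsilon$ through the weak limits, upgrades it to $\nu(F)\leq M\mu(F)+\varepsilon$ on closed sets via the standard Portmanteau approximation, and concludes by inner regularity of $\nu$. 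Your route is more elementary and self-contained (no subsequence extraction, no Skorokhod coupling) and yields an explicit domination inequality; the paper's coupling construction has the advantage of exhibiting the limiting Radon--Nikodym derivative as a conditional expectation $\Eb[Y\,|\,X]$, which is precisely the object discussed in Remark 2 following the lemma, where the authors explain why fractional moment bounds on $f_n$ do not automatically transfer to the limit. Both proofs are complete and correct; all the small points you flag (uniformity of the truncation in $n$, inner regularity of Borel measures on a Polish space) are handled properly.
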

\begin{proof} Let $(X_n)_{n\in\Nb}$ be random variables defined on a probability space $(\Gamma, \Fc, \Pb)$ such that $X_n$  has law $\mu_n$. Denote $Y_n=f_n(X_n)$. Then $\Eb[Y_n]=1$, and the sequence of random vectors $(X_n, Y_n)_{n\in\Nb}$ is tight and admits convergent subsequences. Let us restrict to such a subsequence and still denote it by $(X_n, Y_n)_{n\in\Nb}$ for notational simplicity. Furthermore, by Skorokhod's representation theorem, we can couple $(X_n, Y_n)_{n\in\Nb}$ and its limit $(X, Y)$ with marginal laws $\mu$ and $\nu$ respectively, such that $(X_n, Y_n)\to (X, Y)$ almost surely on the probability space $(\Gamma, \Fc, \Pb)$.

For any bounded continuous function $g$ on $(B, \Bc)$, we then have
$$
\int g(x) \nu_n({\rm d}x) = \int g(x) f_n(x) \mu_n({\rm d}x) = \Eb[g(X_n) Y_n].
$$
The left hand side converges to $\int g(x) \nu({\rm d}x)$, while the right hand side converges to $\Eb[g(X) Y]$ by the uniform integrability assumption on $(Y_n)_{n\in\Nb}$. Since $X$  has law $\mu$, it follows that $\nu$ is absolutely continuous with respect to $\mu$ with Radon-Nikodym derivative $f(X)= \Eb[Y | X]$.
\end{proof}

\begin{rem}\label{r:Radon}
In Lemma \ref{l:Radon}, we cannot deduce how singular $\nu$ is with respect to $\mu$ from an assumption such as $\sup_n \int f_n^\alpha {\rm d}\mu_n\leq \varepsilon$ for some $\alpha \in (0,1)$ and $\varepsilon$ small, because from the proof of Lemma \ref{l:Radon}, we obtain
$$
\Eb[Y^\alpha] \leq \liminf_{n\to\infty} \Eb[Y_n^\alpha] = \int f_n^\alpha {\rm d}\mu_n \leq \varepsilon,
$$
but we cannot draw this conclusion if $Y$ is replaced by $f(X)=\Eb[Y|X]$, the Radon-Nikodym derivative of $\nu$ with respect to $\mu$. Indeed, if $\mu$ is a delta measure, then so would be $\nu$, and we would always have $\int f^\alpha {\rm d}\mu=1$ regardless of the value of $\alpha$ and $\varepsilon$. However, if $(\mu_n)_{n\in\Nb}$ and $\mu$ are atomic measures supported on the same discrete set containing no cluster points, then indeed,
\begin{equation}\label{falpha}
\int f^\alpha {\rm d}\mu \leq \liminf_{n\to\infty} \int f_n^\alpha {\rm d}\mu_n, \qquad \alpha \in (0,1).
\end{equation}
\red{Therefore, in order to prove the singularity in Theorem \ref{t:Sing}, it is not sufficient to show that
\[
\lim_{N\to\infty}\liminf_{\sva\downarrow 0}\Eb\Et[(\Qc_N^\sva)^{1/2}]=0 \qquad (\mbox{recall } \Qc_N^\sva \mbox{ from } \eqref{QNa}).
\]
Instead, we need to discretise the random variables as in \eqref{WPNmij} and then prove their weak convergence in Lemma \ref{l:aprox},
so that \eqref{falpha} can be applied.}
\end{rem}

\section{Positive moments of the partition functions} \label{s:mom}

In this appendix, we give uniform bounds on positive moments of the partition function $\widetilde Z^{\omega, \sva}_{\Omega; \lambda, h}$ as stated in Lemma \ref{l:mom}. The proof is based on hyper-contractivity. Similar estimates have been obtained in \cite{casuzy19} for the directed polymer model in dimension $2+1$.

We first formulate it explicitly as a lemma.
\begin{lem}\label{l:posmom}
Let $(\widetilde{Z}^{\omega,\sva}_{\Omega;\lambda,h})_{\sva\in (0,1]}$ be the RFIM partition function with $+$ boundary condition as in Theorem \ref{t:JntCnv}. Then for any $p>0$,
\begin{equation}
\limsup_{\sva\downarrow 0} \Eb\Big[\Big(\widetilde{Z}^{\omega,\sva}_{\Omega;\lambda,h}\Big)^p\Big] <\infty.
\end{equation}
\end{lem}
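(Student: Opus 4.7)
My plan is to factorise $\widetilde Z^{\omega,\sva}_{\Omega;\lambda,h}=A_\sva B_\sva$ as in \eqref{e:Prefactor}, with $A_\sva := \theta_\sva \prod_{x\in\Omega_\sva}\cosh(\xi^\sva_x)$ and $B_\sva := \sum_{I\subseteq\Omega_\sva}\Et_\Omega^\sva[\sigma^I_\cdot]\tanh(\xi^\sva_\cdot)^I$, and to bound each factor separately. Lemma \ref{l:Theta} gives uniform control on all positive moments of $A_\sva$, so H\"older's inequality reduces the problem to showing that $\sup_{\sva\in(0,1]}\Eb[B_\sva^{2p}]<\infty$. By Jensen's inequality it suffices to treat $p\geq 1$, and a second application of Jensen reduces the range $p\in[1,2)$ to the case $p=2$.

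The core of the argument is to expand $B_\sva$ into orthogonal homogeneous chaos levels in centred variables, and apply hypercontractivity level by level. Setting $\widetilde\Rc_x:=\tanh(\xi^\sva_x)-\Eb[\tanh(\xi^\sva_x)]$ and expanding each $\tanh(\xi^\sva_x)=\widetilde\Rc_x+\Eb[\tanh(\xi^\sva_x)]$ yields
\[
B_\sva \;=\; \sum_{k=0}^\infty B_{\sva,k},\qquad B_{\sva,k}\;:=\!\!\sum_{\genfrac{}{}{0pt}{3}{K\subseteq\Omega_\sva}{|K|=k}}\!\!\bar\psi^\sva(K)\,\widetilde\Rc_\cdot^{K},
\]
where $\bar\psi^\sva(K):=\sum_{J\subseteq\Omega_\sva\setminus K}\Et_\Omega^\sva[\sigma^{K\cup J}_\cdot]\,\Eb[\tanh(\xi^\sva_\cdot)]^J$. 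Using the bound $|\Eb[\tanh(\xi^\sva_x)]|\leq C\sva^{15/8}$ from \eqref{e:Cor}, a weighted Cauchy--Schwarz step analogous to the passage from \eqref{psipsi} to \eqref{Up2} absorbs the auxiliary sum over $J$ into a prefactor of the form $(1+O(\sva^{7/4}))^{|\Omega_\sva|}$, which remains bounded thanks to $|\Omega_\sva|=O(\sva^{-2})$. Combined with the spin-correlation bounds \eqref{e:fUpp}--\eqref{fOmbd} and the Riemann-sum approximation argument of \cite[Section 8]{casuzy17}, this yields the level-wise $L^2$ estimate
\[
\Eb\bigl[B_{\sva,k}^2\bigr] \;\leq\; C_{\lambda,h}^{\,k}\,(k!)^{-3/4},\qquad k\geq 0,\ \sva\in(0,1],
\]
the $(k!)^{-3/4}$ arising from the $(k!)^{-1}$ that appears when passing from ordered $k$-tuples to unordered sets combined with the $(k!)^{1/4}$ growth permitted by \eqref{fOmbd}.

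Finally, since $\omega$ has finite exponential moments, the independent centred variables $(\widetilde\Rc_x)_{x\in\Omega_\sva}$ satisfy a uniform moment-ratio bound $\|\widetilde\Rc_x\|_{L^q}\leq\gamma_q\|\widetilde\Rc_x\|_{L^2}$ for every $q\geq 2$ (using \eqref{e:Cor} to see that $\|\widetilde\Rc_x\|_{L^2}\asymp\lambda^\sva_x$). The hypercontractive inequality for polynomial chaos of independent random variables satisfying such moment bounds (as used in the proof of \cite[Theorem 2.6]{casuzy17}) then gives a finite constant $c_p$ depending only on $p$ with
\[
\Eb\bigl[|B_{\sva,k}|^{2p}\bigr]^{1/(2p)} \;\leq\; c_p^{\,k}\,\Eb\bigl[B_{\sva,k}^2\bigr]^{1/2}\;\leq\;\bigl(c_p\,C_{\lambda,h}^{1/2}\bigr)^{k}\,(k!)^{-3/8},
\]
and Minkowski's inequality yields
\[
\Eb\bigl[|B_\sva|^{2p}\bigr]^{1/(2p)} \;\leq\;\sum_{k=0}^\infty \bigl(c_p\,C_{\lambda,h}^{1/2}\bigr)^{k}\,(k!)^{-3/8}\;<\;\infty,
\]
uniformly in $\sva\in(0,1]$. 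The main technical obstacle I anticipate is verifying the kernel bound $\Eb[B_{\sva,k}^2]\leq C_{\lambda,h}^{\,k}(k!)^{-3/4}$: one must track carefully that the super-polynomial decay survives both the centring step (the extra summation over $J$) and the conversion of the discrete set sum to a Riemann integral, which amounts to combining the weighted Cauchy--Schwarz splitting of \eqref{psipsi}--\eqref{Up2} with the $L^2(\Omega^k)$ bound of \eqref{fOmbd}.
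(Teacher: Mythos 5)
Your proposal follows essentially the same route as the paper's proof in Appendix \ref{s:mom}: strip the prefactor via Lemma \ref{l:Theta}, centre the variables $\tanh(\xi^\sva_x)$, view the remainder as a polynomial chaos, apply hypercontractivity with constant $c_p^{|I|}$ (the paper applies it once to the full expansion, you apply it level by level and then use Minkowski — the two are interchangeable here), and close the argument with the kernel bounds \eqref{e:fUpp}--\eqref{fOmbd}, whose $C^k(k!)^{1/4}/k!$ decay is exactly what makes the series converge. One quantitative slip should be corrected: the prefactor from the weighted Cauchy--Schwarz absorption of the $J$-sum is not $(1+O(\sva^{7/4}))^{|\Omega_\sva|}$, and that quantity is in fact \emph{not} bounded — it diverges like $e^{c\sva^{-1/4}}$ since $|\Omega_\sva|=O(\sva^{-2})$. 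Because $|\Eb[\tanh(\xi^\sva_x)]|=O(\sva^{15/8})$, choosing the Cauchy--Schwarz weight per element of $J$ to be $\varepsilon\sva^{7/4}$ (so as to match the per-site factor $\Eb[\widetilde\Rc_x^2]=O(\sva^{7/4})$ carried by $K$) makes the second factor $(1+O(\varepsilon^{-1}\sva^{2}))^{|\Omega_\sva|}$, which does stay bounded, while the first factor recombines with \eqref{e:fUpp} and the Riemann-sum count to give the claimed $C^k(k!)^{-3/4}$; the paper sidesteps this bookkeeping by normalising to unit-variance variables first, so the means become $O(\sva)$ and \cite[Lemma 4.1]{casuzy17} applies directly. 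With that exponent fixed, your level-wise estimate matches the paper's variance bound and the argument goes through.
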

\begin{proof} We first recall from \eqref{e:Prefactor} the following expansion, where $\xi^\sva_x:=\lambda_x^\sva\omega^\sva_x+h_x^\sva$:
\begin{flalign}
\widetilde{Z}_{\Omega;\lambda, h}^{\omega,\sva}
& = \theta_\sva\cosh(\xi^\sva_\cdot)^{\Omega_\sva}\sum_{I\subseteq\Omega_\sva}\Et_{\Omega}^\sva\left[\sigma_\cdot^I\right] \tanh(\xi^\sva_\cdot)^I.
\end{flalign}
By Lemma \ref{l:Theta}, the prefactor $\theta_\sva\cosh(\xi_\cdot^\sva)^{\Omega_\sva}$ converges to $1$ in probability and is uniformly bounded in $L^p$ for any $p>0$. Therefore we will drop this prefactor from now on.

By Taylor expansion and the assumption that $\omega$ has finite exponential moments, it is easily seen that
\begin{gather*}
\mu^\sva_x:=\Eb[\tanh \xi_x^\sva]  = h_x^\sva+O(\sva^{21/8}), \qquad (\vartheta^\sva_x)^2:= \mathbb{V}{\rm ar}[\tanh \xi_x^\sva]  = (\lambda_x^\sva)^2+O(\sva^{7/2}), \\
\Eb[(\tanh \xi_x^\sva)^{2k}]  = (\lambda_x^\sva)^{2k}+O(\sva^{7k/2}) \quad \forall\, k\in\Nb.
\end{gather*}
Denoting $\eta^\sva_x:= (\xi^\sva_x-\mu^\sva_x)/\vartheta^\sva_x$, then we can write
$$
\Psi_\sva(\eta^\sva_\cdot) := \sum_{I\subseteq\Omega_\sva}\Et_{\Omega}^\sva\left[\sigma_\cdot^I\right]
\prod_{x\in I} (\vartheta^\sva_x \eta^\sva_x + \mu^\sva_x) =: \sum_{J\subset \Omega_\sva} \psi_\sva(J) \prod_{x\in J} \eta^\sva_x,
$$
which is a polynomial chaos expansion with kernel $\psi_\sva$ in the family of independent random variables $(\eta^\sva_x)_{x\in\Omega_\sva}$, which has mean $0$, variance $1$, and uniformly bounded $2k$-th moment for each $k\in\Nb$. Therefore we can apply hypercontractivity to bound its moments \red{(see e.g.~\cite[Theorem B.1]{casuzy19} for the exact form needed here, or the original reference \cite[Proposition 3.11]{moodol10} for a proof).} More specifically,  for any $p>2$, we can find $c_p\in (1, \infty)$ such that
\begin{align}
\Eb\big[|\Psi(\eta^\sva_\cdot)|^p\big]^{\frac2p} \leq \sum_{J\subset\Omega_\sva} c_p^{2|J|} \psi_a(J)^2
& = \Eb\Big[ \Big(\sum_{J\subset \Omega_\sva} \psi_\sva(J) \prod_{x\in J} c_p\eta^\sva_x\Big)^2\Big] \notag\\
& = \Eb\big[\Psi_\sva(c_p\eta^\sva_\cdot)^2\big] = \Eb[\Psi_\sva(c_p\eta^\sva_\cdot)]^2 + \mathbb{V}{\rm ar}(\Psi_\sva(c_p\eta^\sva_\cdot)). \label{Psicpa}
\end{align}
Note that
$$
\Eb[\Psi_\sva(c_p\eta^\sva_\cdot)] = \Eb[\Psi_\sva(\eta^\sva_\cdot)] = (1+o(1)) \Eb[\widetilde{Z}_{\Omega;\lambda, h}^{\omega,\sva}]
= (1+o(1)) \Et^\sva_\Omega\big[e^{\sum_x \sigma_x h^\sva_x}\big] \to \Et_\Omega[e^{\langle \Phi, h\rangle}] 
$$
which is finite by \cite[Corollary 3.8]{cagane15} on the convergence of the exponential moment of the Ising magnetisation field. Therefore to bound
$\Eb\big[|\Psi(\eta^\sva_\cdot)|^p\big]$, it only remains to bound $\mathbb{V}{\rm ar}(\Psi_\sva(c_p\eta^\sva_\cdot))$.

We can write
$$
\Psi_\sva(c_p\eta^\sva_\cdot) = \sum_{I\subseteq\Omega_\sva}\Et_{\Omega}^\sva\left[\sigma_\cdot^I\right] \prod_{x\in I} (c_p\vartheta^\sva_x \eta^\sva_x + \mu^\sva_x) = \sum_{I\subseteq\Omega_\sva}\Et_{\Omega}^\sva\left[\sigma_\cdot^I\right] \prod_{x\in I} c_p\vartheta^\sva_x (\eta^\sva_x + \tilde \mu^\sva_x),
$$
where $\tilde \mu^\sva_x := \mu^\sva_x/c_p\vartheta^\sva_x = (\sva+o(\sva)) h(x)/c_p\lambda(x)$. By \cite[Lemma 4.1]{casuzy17},
in order to bound $\mathbb{V}{\rm ar}(\Psi_\sva(c_p\eta^\sva_\cdot))$, we can remove $\tilde \mu^\sva_x$ from $\eta^\sva_x+\tilde\mu^\sva_x$, and for any $\varepsilon>0$, bound
\begin{align*}
\mathbb{V}{\rm ar}(\Psi_\sva(c_p\eta^\sva_\cdot)) & \leq e^{\varepsilon^{-1}\sum_{x\in \Omega_\sva} (\tilde\mu^\sva_x)^2}
\sum_{I \subset\Omega_\sva} (1+\varepsilon)^{|I|}\Et_{\Omega}^\sva\left[\sigma_\cdot^I\right]^2 \prod_{x\in I} (c_p\vartheta^\sva_x)^2 \\
& \leq e^{\varepsilon^{-1}(c_p^{-2}+o(1)) \Vert h/\lambda\Vert_{L^2(\Omega)}^2} \sum_{I \subset\Omega_\sva} \big((1+\varepsilon)c_p^2|\lambda|_\infty^2\big)^{|I|} f^2_\Omega(I) \sva^{2k} \\
& \leq C \sum_{k=0}^\infty \big((1+\varepsilon)c_p^2|\lambda|_\infty^2\big)^{k} \frac{\Vert f_\Omega\Vert_{L^2(\Omega^k)}^2}{k!}\\
& \leq C \sum_{k=0}^\infty \big((1+\varepsilon)c_p^2|\lambda|_\infty^2\big)^{k} \frac{C^k(k!)^{1/4}}{k!} <\infty,
\end{align*}
where we have used the bounds \eqref{e:fUpp} and \eqref{fOmbd}. This shows that the right hand side of \eqref{Psicpa} is uniformly bounded for $\sva$ small, which concludes the proof of Lemma \ref{l:mom}.
\end{proof}

\section{Negative moments of the partition functions} \label{s:pos}

In this appendix, we give uniform bounds on negative moments of the partition function $\widetilde Z^{\omega, \sva}_{\Omega; \lambda, h}$ as stated in Lemma \ref{l:mom}, which implies that the continuum RFIM partition function $\Zc^W_{\Omega; \lambda, h}$ has negative moments of all orders; in particular, it is almost surely strictly positive. The proof is based on a concentration of measure bound for convex and locally Lipschitz functions of i.i.d.\ random variables, which is applied to $\log \widetilde  Z^{\omega, \sva}_{\Omega; \lambda, h}$ and then passed to the limit as $\sva\downarrow 0$. Similar estimates have been obtained for the directed polymer model~\cite{mo14} and the disordered pinning model~\cite{catoto17}. We will follow closely \cite{casuzy19} where such calculations were done for the directed polymer in dimension $2+1$, using the inequality \eqref{eq:thebound} proved in \cite{catoto17}.

\begin{lem}\label{l:negmom}
Let $(\widetilde{Z}^{\omega,\sva}_{\Omega;\lambda,h})_{\sva\in (0,1]}$ be the RFIM partition function with $+$ boundary condition as in Theorem \ref{t:JntCnv}. Assume that an i.i.d.\ sequence $(\omega_n)_{n\in\Nb}$ with the same law as $\omega_x$ satisfies the following concentration of measure inequality:
\begin{gather}
	\exists\, \gamma>1, C_1, C_2 \in (0,\infty) \text{\ such that for all $n\in\Nb$ and } \red{g}: \Rb^n \to \Rb
	\text{ convex and $1$-Lipschitz}, \notag \\
	\Pb \Big( \big| \red{g}(\omega_1, \ldots, \omega_N) - M_g \big|
	\ge t\Big) \le C_1e^{-C_2 t^\gamma} \,, \quad M_g \mbox{ is a median of } \red{g}(\omega_1, \ldots, \omega_N). \label{assD2}
\end{gather}
Then there exists $c=c(\lambda, h)\in (0,\infty)$ such that for all $\sva \in (0,1)$ sufficiently small, we have
\begin{align} \label{eq:boundf}
	\forall\, t \ge 0: \qquad
	\ \Pb(\log \widetilde  Z^{\omega, \sva}_{\Omega; \lambda, h}\leq -t) \leq c \, e^{- t^\gamma / c}.
\end{align}
In particular, $\limsup_{a\in (0,1]} \Eb[(\widetilde  Z^{\omega, \sva}_{\Omega; \lambda, h})^{-p}]<\infty$ for all $p\in (0,\infty)$.
\end{lem}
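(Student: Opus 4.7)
The strategy, following \cite{casuzy19}, is to exploit convexity of $\omega \mapsto \log \widetilde Z^{\omega,\sva}_{\Omega;\lambda,h}$ in the disorder and apply the concentration-of-measure inequality of \cite{catoto17}, which is sharper than the naive convex-Lipschitz bound, together with a quantile lower bound for $\widetilde Z^{\omega,\sva}_{\Omega;\lambda,h}$. Since
\begin{equation*}
\widetilde Z^{\omega,\sva}_{\Omega;\lambda,h} = \theta_\sva \Et_\Omega^{\sva}\Big[\exp\Big(\sum_{x\in\Omega_\sva} \xi^\sva_x \sigma_x\Big)\Big]
\end{equation*}
is a log-sum-exp of affine functions of $(\omega_n)_{n\in\Zb^2}$, the function $\log \widetilde Z^{\omega,\sva}_{\Omega;\lambda,h}$ is convex in $\omega$ with gradient $\partial_{\omega_n} \log \widetilde Z^{\omega,\sva}_{\Omega;\lambda,h} = \lambda^\sva_{n\sva}\,\Et^{\omega,\sva}_{\Omega;\lambda,h}[\sigma_{n\sva}]$ for $n\sva \in \Omega_\sva$. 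The worst-case $\ell^2$-Lipschitz bound $\Vert\lambda^\sva\Vert_{\ell^2}$ is of order $\sva^{-1/8}$ and by itself is insufficient to give a $\sva$-uniform concentration estimate.

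The next step is to invoke the concentration inequality from \cite{catoto17}, which for convex functions of i.i.d.\ variables satisfying \eqref{assD2} yields a deviation bound governed by a quantity weaker than the worst-case Lipschitz constant, essentially the $\ell^2$-average of the gradient
\begin{equation*}
\Eb\Big[\Vert \nabla_\omega \log \widetilde Z^{\omega,\sva}_{\Omega;\lambda,h}\Vert_{\ell^2}^2\Big] = \sum_{x\in\Omega_\sva}(\lambda^\sva_x)^2\, \Eb\Big[\Et^{\omega,\sva}_{\Omega;\lambda,h}[\sigma_x]^2\Big].
\end{equation*}
A two-replica computation, using the critical Ising correlation bounds of Lemma \ref{l:spin} together with the positive-moment bound of Lemma \ref{l:posmom}, should show that this $\ell^2$-average stays of order one uniformly in $\sva$: the growth of $\sum_x (\lambda^\sva_x)^2$ is compensated by the decay of the averaged squared magnetization $\Eb[\Et^{\omega,\sva}[\sigma_x]^2]$, which at criticality is comparable to the square of the one-point function $\sva^{1/4}$. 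Applied to $\log \widetilde Z^{\omega,\sva}_{\Omega;\lambda,h}$, the inequality then produces
\begin{equation*}
\Pb\Big(\log \widetilde Z^{\omega,\sva}_{\Omega;\lambda,h} - q_\sva \leq -s\Big) \leq c_1 \exp(-c_2\, s^\gamma), \qquad s \geq 0,
\end{equation*}
uniformly in small $\sva$, where $q_\sva$ denotes an appropriate quantile of $\log \widetilde Z^{\omega,\sva}_{\Omega;\lambda,h}$.

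To close the argument I would bound $q_\sva$ from below uniformly in $\sva$. By Lemma \ref{l:posmom}, both $\Eb[\widetilde Z^{\omega,\sva}_{\Omega;\lambda,h}]$ and $\Eb[(\widetilde Z^{\omega,\sva}_{\Omega;\lambda,h})^2]$ are uniformly bounded, while the $L^2$-convergence of the Wiener chaos expansion \eqref{e:Par} gives $\Eb[\widetilde Z^{\omega,\sva}_{\Omega;\lambda,h}] \to \Eb[\Zc^W_{\Omega;\lambda,h}] = \Et_\Omega[e^{\langle \Phi, h\rangle}]>0$ (the latter positivity follows from \cite{cagane15}). Hence the Paley--Zygmund inequality yields $\eta,\delta>0$ with $\Pb(\widetilde Z^{\omega,\sva}_{\Omega;\lambda,h} \geq \eta) \geq \delta$ for all $\sva$ small, so $q_\sva$ may be taken $\geq \log \eta$. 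Combining with the concentration step gives \eqref{eq:boundf}, and the uniform negative-moment bound then follows from the layer-cake identity
\begin{equation*}
\Eb\Big[\big(\widetilde Z^{\omega,\sva}_{\Omega;\lambda,h}\big)^{-p}\Big] = p\int_0^\infty e^{pt}\, \Pb\Big(\log \widetilde Z^{\omega,\sva}_{\Omega;\lambda,h} \leq -t\Big)\, dt,
\end{equation*}
which is finite because $\gamma > 1$ ensures the stretched-exponential decay in $t$ dominates $e^{pt}$ for every fixed $p > 0$.

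The principal obstacle lies in the replacement of the diverging worst-case Lipschitz constant $\Vert\lambda^\sva\Vert_{\ell^2}$ by the averaged quantity above: this is precisely what the refined inequality of \cite{catoto17} is designed to exploit, and its applicability here hinges on the critical magnetization decay from Lemma \ref{l:spin}, in direct analogy with the treatment of the $2+1$-dimensional directed polymer in \cite{casuzy19}.
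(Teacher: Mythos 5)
Your proposal follows essentially the same route as the paper: convexity of $\log\widetilde Z^{\omega,\sva}_{\Omega;\lambda,h}$ in $\omega$, the concentration inequality of \cite{catoto17}, a Paley--Zygmund lower bound on a quantile via the first and second moments of $\widetilde Z^{\omega,\sva}_{\Omega;\lambda,h}$, and a layer-cake integration using $\gamma>1$ to extract all negative moments. One step needs care, however: the unconditional averaged gradient
$\Eb\big[\Vert\nabla_\omega \log\widetilde Z^{\omega,\sva}_{\Omega;\lambda,h}\Vert_{\ell^2}^2\big] = \sum_{x\in\Omega_\sva}(\lambda^\sva_x)^2\,\Eb\big[\Et^{\omega,\sva}_{\Omega;\lambda,h}[\sigma_x]^2\big]$
cannot be bounded as you suggest without circularity, since the quenched Gibbs average carries a factor $(\widetilde Z^{\omega,\sva}_{\Omega;\lambda,h})^{-2}$ whose $\Pb$-expectation is precisely the kind of negative moment the lemma is trying to establish; Lemma \ref{l:posmom} (positive moments) does not help here, and there is no monotonicity to remove the random normalisation. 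The inequality of \cite{catoto17} as used in the paper (see \eqref{eq:thebound}) is structured to require only that $\Pb(f_\sva\geq b,\ |\nabla f_\sva|\leq c)$ be bounded away from zero, and this is verified by applying Markov's inequality to the \emph{restricted} quantity $\Eb[|\nabla f_\sva|^2\ind_{\{f_\sva\geq b\}}]$: on the event $\{\widetilde Z^{\omega,\sva}_{\Omega;\lambda,h}\geq C_{1,h}/2\}$ the normalisation is replaced by a deterministic constant, and the remaining two-replica overlap expectation $\Et^{\sva,\otimes 2}_{\Omega}\big[L_\lambda(\sigma,\sigma')e^{L_\lambda(\sigma,\sigma')+\cdots}\big]$ is bounded by Cauchy--Schwarz together with the second moment of the partition function, which is where the critical correlation decay you invoke actually enters. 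With this correction -- conditioning on the super-level set before averaging the gradient -- your argument coincides with the paper's proof.
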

\begin{rem}
We can improve \eqref{eq:boundf} to a uniform bound with respect to the boundary condition $\xi\in \{\pm1\}^{\Omega_\sva}$ if we had a uniform upper bound on $\Eb[(\widetilde Z^{\omega, \sva, \xi}_{\Omega;\lambda, h})^2]$, which unfortunately is missing because the spin correlation functions obtained in \cite{chhoiz15} is only for special boundary conditions and cannot be used to dominate spin correlation functions for general boundary conditions.
\end{rem}

Condition \eqref{assD2} is satisfied if $\omega$ is either bounded, or Gaussian, or has a density of the form $\exp(-V(\cdot) + U(\cdot))$ where $V$ is uniformly strictly convex and $U$ is bounded. See \cite{le01} for more details. In particular, by choosing $(\omega_x)_{x\in \Zb^2}$ to be i.i.d.\ standard Gaussian and applying Theorem~\ref{t:JntCnv} and Fatou's lemma, we obtain the following.

\begin{cly}\label{c:pos}
Let $\Zc^W_{\Omega; \lambda, h}$ be the continuum RFIM partition function with $+$ boundary condition as in Theorem \ref{t:JntCnv}. Then $\Eb[(\Zc^W_{\Omega; \lambda, h})^{-p}]<\infty$ for all $p\in (0,\infty)$, and $\Zc^W_{\Omega; \lambda, h}>0$  $\Pb$-a.s.
\end{cly}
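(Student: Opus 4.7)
The plan is to deduce the corollary as a direct application of Lemma \ref{l:negmom} combined with Theorem \ref{t:JntCnv} via a lower-semicontinuity (Portmanteau/Fatou) argument.

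First I would specialise the disorder: choose $(\omega_x)_{x\in \Zb^2}$ to be i.i.d.\ standard Gaussian random variables. These satisfy the hypotheses required throughout the paper (zero mean, unit variance, finite exponential moments), and, crucially, they satisfy the concentration of measure inequality \eqref{assD2} with $\gamma=2$ by the classical Gaussian concentration inequality for convex Lipschitz functions. Hence Lemma \ref{l:negmom} applies and gives, for every $p\in(0,\infty)$,
\begin{equation*}
K_p := \sup_{\sva\in(0,\sva_0]} \Eb\bigl[(\widetilde Z^{\omega,\sva}_{\Omega;\lambda,h})^{-p}\bigr] < \infty,
\end{equation*}
for some $\sva_0>0$. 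Note that $\widetilde Z^{\omega,\sva}_{\Omega;\lambda,h}$ is strictly positive for each $\sva>0$ and each realisation of $\omega$ (being $\theta_\sva$ times an expectation of a strictly positive quantity), so these negative moments are finite and well defined.

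Next, Theorem \ref{t:JntCnv} guarantees that $\widetilde Z^{\omega,\sva}_{\Omega;\lambda,h}$ converges in distribution to $\Zc^W_{\Omega;\lambda,h}$ as $\sva\downarrow 0$. The function $\varphi_p : [0,\infty] \to [0,\infty]$ defined by $\varphi_p(x) := x^{-p}$ for $x>0$ and $\varphi_p(0):=+\infty$ is non-negative and lower semicontinuous. Consequently, by the Portmanteau theorem (equivalently, Fatou's lemma for weak convergence applied to non-negative lower semicontinuous functionals),
\begin{equation*}
\Eb\bigl[(\Zc^W_{\Omega;\lambda,h})^{-p}\bigr] \;=\; \Eb\bigl[\varphi_p(\Zc^W_{\Omega;\lambda,h})\bigr] \;\le\; \liminf_{\sva\downarrow 0} \Eb\bigl[\varphi_p(\widetilde Z^{\omega,\sva}_{\Omega;\lambda,h})\bigr] \;\le\; K_p \;<\;\infty.
\end{equation*}
This proves the desired bound on the negative moments of all orders.

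Finally, strict positivity follows immediately: since $\Eb[(\Zc^W_{\Omega;\lambda,h})^{-p}] < \infty$ for some (indeed every) $p>0$, we must have $\Zc^W_{\Omega;\lambda,h}>0$ for $\Pb$-a.e.\ realisation of the white noise $W$. There is essentially no obstacle here beyond verifying that Gaussian disorder satisfies \eqref{assD2} and that the map $x\mapsto x^{-p}$ (extended by $+\infty$ at $0$) is lower semicontinuous on $[0,\infty]$, both of which are standard. The substantive work has already been done in establishing Lemma \ref{l:negmom} and Theorem \ref{t:JntCnv}.
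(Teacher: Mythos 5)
Your proposal is correct and follows essentially the same route as the paper: specialise to Gaussian disorder (which satisfies \eqref{assD2}), invoke Lemma \ref{l:negmom} for the uniform negative moment bounds, and pass to the limit via Theorem \ref{t:JntCnv} together with Fatou's lemma for weak convergence. Your explicit justification via lower semicontinuity of $x\mapsto x^{-p}$ extended by $+\infty$ at $0$ is a slightly more careful rendering of the same argument.
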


\begin{proof}[Proof of Lemma \ref{l:negmom}]
\red{The first fact we need is that for $(\omega_n)_{n\in\mathbb N}$ that satisfy \eqref{assD2}, one can also obtain a lower deviation bound for $f(\omega_1, \ldots, \omega_n)$ where $f$ is only convex and locally Lipschitz. More precisely, by \cite[Proposition~3.4]{catoto17}, for every $n\in\Nb$ and every convex differentiable function $f\colon \Rb^n\to \Rb$,}
the following bound holds for all $b\in \Rb$ and $t,c\in (0,\infty)$,
\begin{align} \label{eq:thebound}
	\Pb\big(f(\omega)\leq b-t \big) \,
	\,\Pb\big(f(\omega)\geq b, |\nabla f(\omega) | \leq c \big)
	\leq \red{C_1} \exp\Big( -\frac{(t/c)^\gamma}{\red{C_2}} \,\Big),
\end{align}
where $\omega = (\omega_1, \ldots, \omega_n)$ and
$|\nabla f(\omega)|:=\sqrt{\sum_{i=1}^n(\partial_i f(\omega))^2}$ is the norm of the gradient.
We can then deduce \eqref{eq:boundf} by applying \eqref{eq:thebound} to $f_\sva(\omega) = \log \widetilde  Z^{\omega, \sva}_{\Omega; \lambda, h}$.

It suffices to show that for some $b$ to be chosen, there exist $c=c(\lambda, h) \in (0,\infty)$
and $\theta \in (0,1)$ such that
\begin{align}\label{concentrate1}
	\liminf_{a\downarrow 0} 	\Pb \big( f_\sva(\omega) \geq b \,,
	|\nabla f_\sva(\omega)|\leq c \big) >0 \,.
\end{align}
First note that for any $c > 0$, we have
\begin{align} \label{eq:deba}
	\Pb\big( f_\sva(\omega)\geq b \,,|\nabla f_\sva(\omega)|\leq c\big) = \Pb\big(f_\sva(\omega)\geq b\big) -
	\Pb\big(f_\sva(\omega)\geq b \,,|\nabla f_\sva(\omega)|> c\big).
\end{align}
Note that by the definition of $\widetilde Z^{\omega, \sva}_{\Omega; \lambda, h}$ in \eqref{tildeZ}, Lemma \ref{l:Theta}, and \cite[Corollary 3.8]{cagane15} on the convergence of the exponential moment of the Ising magnetisation field, we have
$$
\lim_{\sva\downarrow 0}\Eb [\widetilde Z^{\omega, \sva}_{\Omega; \lambda, h}]=\lim_{\sva\downarrow 0} (1+o(1))\Et^\sva_\Omega[e^{\sum \sigma_x h^\sva_x}] = \Et_\Omega[e^{\langle \Phi, h\rangle}] =:C_{1,h}>0.
$$
On the other hand, we have $\lim_{\sva\downarrow 0}\Eb [ (\widetilde Z^{\omega, \sva}_{\Omega; \lambda, h})^2 ] = \Eb[ (\Zc^W_{\Omega; \lambda, h})^2] =:C_{2, \lambda h}<\infty$ by \cite[Theorems 3.14 \& 2.3]{casuzy17}.

Choosing $b= \log (C_{1,h}/2)$, we can bound the first probability in \eqref{eq:deba} by Paley-Zygmund inequality:
\begin{align}\label{PL}
	\Pb\big(f_\sva(\omega)\geq b) = \Pb\big( \widetilde Z^{\omega, \sva}_{\Omega; \lambda, h} \geq C_{1,h}/2\big) =  	
	\Pb\Big( \widetilde Z^{\omega, \sva}_{\Omega; \lambda, h} \geq \tfrac{1+o(1)}{2} \Eb [\widetilde Z^{\omega, \sva}_{\Omega; \lambda, h}] \Big)
	\geq \frac{C_{1,h}^2}{5C_{2, \lambda, h}}>0.
\end{align}

To bound the second term in \eqref{eq:deba}, we first compute that for each $x\in \Omega_\sva$,
\begin{align*}
\frac{\partial f_\sva(\omega)}{\partial \omega_{x}} = \frac{1}{Z^{\omega, \sva}_{\Omega; \lambda, h}}
\Et_{\Omega} \Big[ \sigma_x \lambda^\sva_x e^{\sum_x \sigma_x(\lambda^\sva_x\omega^\sva_x+h^\sva_x) }] =
\Et^{\omega, \sva}_{\Omega;\lambda, h}[\lambda^\sva_x \sigma_x]
\end{align*}
\begin{align*}
\mbox{and} \qquad |\nabla f_\sva(\omega)|^2 = \sum_{x\in \Omega_\sva}
\Big(\frac{\partial f_\sva}{\partial \omega_{x}}\Big)^2 =
\Et^{\omega, \sva, \otimes 2}_{\Omega;\lambda, h} [L_\lambda(\sigma, \sigma')],
\end{align*}
where $L_\lambda(\sigma, \sigma'):= \sum_{x\in \Omega_\sva} (\lambda^\sva_x)^2 \sigma_x \sigma_x'\geq 0$ for two spin configurations $\sigma$ and $\sigma'$ sampled independently according to $\Pt_{\Omega}^\sva$, which is the overlap between $\sigma$ and $\sigma'$ weighted by $(\lambda^\sva_\cdot)^2$.

On the event that $f_\sva(\omega)\geq b$, that is $\widetilde Z^{\omega, \sva}_{\Omega; \lambda, h}\geq C_{1,h}/2$, we can bound
\begin{align*}
	|\nabla f_\sva(\omega)|^2 \leq \frac{4\theta_a^2}{C_{1,h}^2} \,
	                                             \Et^{\sva, \otimes2}_{\Omega}\Big[L_\lambda(\sigma, \sigma') \exp\Big\{\sum_x(\sigma_x+\sigma_x')(\lambda^\sva_x \omega^\sva_x + h^\sva_x) \Big\} \Big].
\end{align*}
Therefore we have
\begin{align} \label{nabfa}
	&\Pb\big(f_\sva(\omega)\geq b\,,|\nabla f_\sva(\omega)|> c\big)\\
\leq \ &  \frac{1}{c^2} \,\,\Eb \Big[ |\nabla f_\sva(\omega)|^2 \,
	\ind_{\{ f_\sva(\omega)\geq b \}} \Big] \notag\\
\leq \ & \frac{4 \theta_\sva^2}{c^2 C_{1,h}^2} \Eb\Big[ \Et^{\sva, \otimes2}_{\Omega}\Big[L_\lambda(\sigma, \sigma') \exp\Big\{\sum_x(\sigma_x+\sigma_x')(\lambda^\sva_x \omega^\sva_x + h^\sva_x) \Big\} \Big] \Big] \notag \\
= \ & \frac{4}{c^2 C_{1,h}^2} \Big(\theta_\sva^2 e^{\sum_x (\lambda^\sva_x)^2}\Big)
\Et^{\sva, \otimes2}_{\Omega}\Big[L_\lambda(\sigma, \sigma') e^{L_\lambda(\sigma, \sigma') + \sum_x \sigma_x h^\sva_x + \sum_x \sigma_x' h^\sva_x }\Big], \notag
\end{align}
where we note that $\theta_\sva^2 e^{\sum_x (\lambda^\sva_x)^2} \to 1$ as $\sva\downarrow 0$ by the same argument as in \red{Lemma \ref{l:Theta}}, while the expectation can be bounded by applying \red{Cauchy}-Schwarz and noting that
$$
\Et^{\sva, \otimes2}_{\Omega}\Big[e^{2\sum_x \sigma_x h^\sva_x + 2\sum_x \sigma_x' h^\sva_x }\Big] = \Et^{\sva}_{\Omega}\Big[e^{2\sum_x \sigma_x h^\sva_x} \Big]^2 \to \Et_\Omega\big[e^{\langle \Phi, 2h\rangle}\big]^2 <\infty
$$
by \cite[Corollary 3.8]{cagane15}, and
$$
\begin{aligned}
\Et^{\sva, \otimes2}_{\Omega}\Big[L^2_\lambda(\sigma, \sigma') e^{2L_\lambda(\sigma, \sigma')}\Big] \leq \Et^{\sva, \otimes2}_{\Omega}\big[e^{4L_\lambda(\sigma, \sigma')}\big] & = e^{-4\sum_x (\lambda^\sva_x)^2} \Eb\Big[\Et_{\Omega}^\sva\Big[e^{\sum_x 2\lambda^\sva_x \omega_x \sigma_x}\Big]^2\Big] \\
& = e^{-4\sum_x (\lambda^\sva_x)^2} e^{\sva^{-1/4}\Vert 2\lambda\Vert _{L^2}^2}  \Eb\big[(\widetilde Z^{\omega, \sva}_{\Omega; 2\lambda, 0})^2\big],
\end{aligned}
$$
where the product of the two exponentials converges to 1 by the same argument as in \red{Lemma \ref{l:Theta}}, and $\Eb\big[(\widetilde Z^{\omega, \sva}_{\Omega; 2\lambda, 0})^2\big]\to C_{2, 2\lambda,\red{0}} <\infty$ by \cite[Theorems 3.14 \& 2.3]{casuzy17}.

Choosing $c$ large enough, we can make the right hand side of \eqref{nabfa} sufficiently small such that \eqref{concentrate1} holds. The bound \eqref{eq:boundf} then follows from the concentration inequality \eqref{eq:thebound}.
\end{proof}

\section{Besov-H\"older distributions integrated over subdomains} \label{s:BH}

In this appendix, we show that for any $\Phi \in \Cc^{\alpha}_{loc}(\Omega)$ with $\alpha \in (-1,0)$, it is possible to define the integral of $\Phi$ over a subdomain $B\subset \Omega$ with a regular enough boundary, which can be applied in particular to the continuum RFIM magnetisation field. This is used in the proof of Theorem \ref{t:Sing} in Section \ref{s:Sing}.

\begin{lem}\label{l:test}
Let $\Omega$ be open, $\alpha \in (-1,0)$, and $\bar{B}\subset \Omega$ with $\partial B$ having upper box dimension $d<2+\alpha$, which is defined by
$$
d=\limsup_{\epsilon\downarrow 0} \frac{\log N(\epsilon)}{\log (1/\epsilon)}
$$
where $N(\epsilon)$ is the minimal number of boxes of side length $\epsilon$ needed to cover $\partial B$. Then the map $\langle \cdot,\ind_B\rangle:\Cc^\alpha_{loc}(\Omega)\rightarrow \Rb$ is well defined and continuous.
\end{lem}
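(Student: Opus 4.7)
The idea is to define $\langle \Phi, \ind_B\rangle$ as the limit of $\langle \Phi,\chi_\varepsilon\rangle$ for a smooth regularisation $\chi_\varepsilon\in C_c^\infty(\Omega)$ of $\ind_B$, then control the Cauchy differences $\langle \Phi,\chi_\varepsilon-\chi_{\varepsilon'}\rangle$ by decomposing the difference into $C^{r_\alpha}_c$ bumps supported near $\partial B$ and counting them via the box dimension hypothesis.

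First I would fix a cutoff $\chi\in C_c^\infty(\Omega)$ with $\chi\equiv 1$ on a neighbourhood of $\bar B$, so that it suffices to control $\langle \chi \Phi,\cdot\rangle$ via $\|\chi\Phi\|_{\Cc^\alpha}$. For each dyadic $\varepsilon=2^{-k}>0$ small enough (so that the $\varepsilon$-neighbourhood of $\bar B$ stays inside $\{\chi=1\}$), I would construct $\chi_\varepsilon\in C_c^\infty(\Omega)$ with $\chi_\varepsilon=\ind_B$ outside an $\varepsilon$-neighbourhood $U_\varepsilon$ of $\partial B$ and $\|\chi_\varepsilon\|_{C^{r_\alpha}}\le C\varepsilon^{-r_\alpha}$, e.g.\ by mollifying $\ind_B$ at scale $\varepsilon$. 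Given the box dimension assumption, for any $\delta>0$ with $d+\delta<2+\alpha$ we have $N(\varepsilon)\le \varepsilon^{-d-\delta}$ for all $\varepsilon$ small. Cover $\partial B$ by $N(\varepsilon)$ balls of radius $\varepsilon$ and fatten to a cover of $U_\varepsilon$ by at most $C N(\varepsilon)$ balls $B(x_i,2\varepsilon)$. Construct a smooth partition of unity $(\eta_i)$ subordinate to this cover with $\|\eta_i\|_{C^{r_\alpha}}\le C\varepsilon^{-r_\alpha}$, and decompose
\begin{equation*}
\chi_\varepsilon-\chi_{\varepsilon'}=\sum_i \eta_i(\chi_\varepsilon-\chi_{\varepsilon'})=:\sum_i \rho_i, \qquad \varepsilon'<\varepsilon,
\end{equation*}
where each $\rho_i$ is supported in $B(x_i,2\varepsilon)$ with $\|\rho_i\|_{C^{r_\alpha}}\le C\varepsilon^{-r_\alpha}$.

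Next, using the wavelet/local characterisation of $\Cc^\alpha$ recalled in Section~\ref{s:Pre} (equivalent to the primal definition \eqref{Calpha1}), for any $g\in C^{r_\alpha}_c$ supported in $B(x,\varepsilon)$ with $\|g\|_{C^{r_\alpha}}\le M$, rescaling and unfolding the sup in \eqref{Calpha1} gives
\begin{equation*}
|\langle \chi\Phi, g\rangle|\le C M\,\varepsilon^{\alpha+2+r_\alpha}\,\|\chi\Phi\|_{\Cc^\alpha}.
\end{equation*}
Applying this to each $\rho_i$ with $M=C\varepsilon^{-r_\alpha}$ and summing,
\begin{equation*}
|\langle \Phi,\chi_\varepsilon-\chi_{\varepsilon'}\rangle|\le C\, N(\varepsilon)\,\varepsilon^{\alpha+2}\,\|\chi\Phi\|_{\Cc^\alpha}\le C\,\varepsilon^{2+\alpha-d-\delta}\|\chi\Phi\|_{\Cc^\alpha}.
\end{equation*}
Since $2+\alpha-d-\delta>0$, summing over dyadic scales shows $(\langle \Phi,\chi_{2^{-k}}\rangle)_k$ is Cauchy; define $\langle \Phi,\ind_B\rangle$ to be its limit. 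The same telescoping argument shows the limit is independent of the choice of mollifier and of the initial cutoff $\chi$.

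Finally, continuity: if $\Phi_n\to \Phi$ in $\Cc^\alpha_{loc}(\Omega)$ then $\|\chi(\Phi_n-\Phi)\|_{\Cc^\alpha}\to 0$, and
\begin{equation*}
|\langle \Phi_n-\Phi,\ind_B\rangle|\le |\langle \Phi_n-\Phi,\chi_\varepsilon\rangle|+C\varepsilon^{2+\alpha-d-\delta}\|\chi(\Phi_n-\Phi)\|_{\Cc^\alpha},
\end{equation*}
where the first term vanishes as $n\to\infty$ for each fixed $\varepsilon$ since $\chi_\varepsilon\in C_c^\infty(\Omega)$, and the second is uniformly small in $n$ for small $\varepsilon$. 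The main technical obstacle is step two, namely arranging the partition-of-unity decomposition of $\chi_\varepsilon-\chi_{\varepsilon'}$ so that the cardinality of bumps matches $N(\varepsilon)$ while keeping the $C^{r_\alpha}$ norms at the natural scaling $\varepsilon^{-r_\alpha}$; this is a standard but careful Whitney-type construction, made possible by the hypothesis $\bar B\subset\Omega$ which keeps everything away from $\partial\Omega$.
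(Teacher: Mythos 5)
Your proposal is correct in substance but takes a genuinely different route from the paper. The paper never mollifies $\ind_B$: it pairs the approximating smooth functions $f_m$ directly with $\ind_B$ via the multi-resolution decomposition $f_m=\Vs_{n_0}f_m+\sum_{n\ge n_0}\Ws_n f_m$ from Section \ref{s:Pre}, and the vanishing moment $\int\psi^{(i)}_{n,x}=0$ automatically annihilates every wavelet whose support does not straddle $\partial B$; the box-dimension hypothesis then bounds the number of surviving wavelets at scale $2^{-n}$ by $2^{n\bar d+2}$, and the estimates $|\langle\psi^{(i)}_{n,x},\ind_B\rangle_{L^2}|\le C2^{-n}$ and $|\langle f_m,\psi^{(i)}_{n,x}\rangle_{L^2}|\le 2^{-(1+\alpha)n}\Vert f_m\Vert_{\Cc^\alpha}$ give a geometric series convergent precisely when $\bar d<2+\alpha$. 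You instead replace the wavelet cancellation by an explicit mollification $\chi_\varepsilon$ of $\ind_B$ together with a partition of unity into $O(N(\varepsilon))$ bumps of scale $\varepsilon$ near $\partial B$, working only from the primal definition \eqref{Calpha1}; this is more elementary and self-contained, at the cost of the Whitney-type construction you acknowledge, whereas the paper gets the localisation to $\partial B$ for free from the basis it has already set up. The counting and the decisive exponent $2+\alpha-d$ are identical in both arguments.

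One intermediate step is misstated, although your application of it survives. The claim that $\Vert g\Vert_{C^{r_\alpha}}\le M$ and ${\rm supp}\, g\subset B(x,\varepsilon)$ imply $|\langle \chi\Phi,g\rangle|\le CM\,\varepsilon^{\alpha+2+r_\alpha}\Vert\chi\Phi\Vert_{\Cc^\alpha}$ is false in general: rescaling $\tilde g(z)=g(x+\varepsilon z)$ gives $\Vert\partial^\beta\tilde g\Vert_\infty=\varepsilon^{|\beta|}\Vert\partial^\beta g\Vert_\infty$, so the $|\beta|=0$ term gains nothing, one only gets $\tilde g/M\in\Bs^{r_\alpha}$, and the sharp conclusion under that hypothesis is $|\langle \chi\Phi,g\rangle|\le CM\,\varepsilon^{\alpha+2}\Vert\chi\Phi\Vert_{\Cc^\alpha}$, weaker than your claim by $\varepsilon^{-r_\alpha}$. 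Your bumps $\rho_i$ are saved because their norms are distributed in the scale-invariant way, namely $\Vert\rho_i\Vert_\infty\le C$ and $\Vert\partial^\beta\rho_i\Vert_\infty\le C\varepsilon^{-|\beta|}$, so the rescaled $\tilde\rho_i$ lie in a fixed multiple of $\Bs^{r_\alpha}$ and each bump contributes $C\varepsilon^{\alpha+2}\Vert\chi\Phi\Vert_{\Cc^\alpha}$, which is exactly what your final display uses. You should state the hypothesis in that scale-invariant form and telescope over consecutive dyadic scales $\varepsilon'=\varepsilon/2$ (so that $\Vert\nabla\chi_{\varepsilon'}\Vert_\infty\lesssim\varepsilon^{-1}$ on the relevant bumps); with that correction the Cauchy estimate $|\langle\Phi,\chi_\varepsilon-\chi_{\varepsilon/2}\rangle|\le C\,N(\varepsilon)\,\varepsilon^{2+\alpha}\Vert\chi\Phi\Vert_{\Cc^\alpha}$ and the remainder of your argument, including the continuity step, are sound.
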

\begin{proof}
We first consider the space $\Cc^\alpha$ instead of $\Cc^\alpha_{loc}(\Omega)$. By definition, any $f\in\Cc^\alpha$ is the limit of a sequence $(f_m)_{m\geq 1}\subset C_c^\infty$ in $\Cc^\alpha$. We then define
$$
\langle f,\ind_B\rangle:=\lim_{m\rightarrow \infty}\langle f_m,\ind_B\rangle.
$$
We need to show that this limit exists and is unique. In particular,  if another sequence $(\tilde{f}_m)_{m\geq 1}\subset C_c^\infty$ converges to $f$ in $\Cc^\alpha$, then $\langle f_m,\ind_B\rangle-\langle \tilde{f}_m,\ind_B\rangle=\langle f_m-\tilde{f}_m,\ind_B\rangle$
converges to $0$ as $m\to\infty$. It then suffices to show that for any $(f_m)_{m\geq 1}\subset C_c^\infty$ converging to $0$ in $\Cc^\alpha$, we have $\langle f_m,\ind_B\rangle\to 0$. The continuity of $\langle \cdot, \ind_B\rangle$ would also follow as a consequence.

First choose $\bar d>d$ and $n_0\in\Nb$ such that for all $n\geq n_0$,
$$
\frac{\log N(2^{-n})}{\log 2^n} \leq \bar d <2+\alpha.
$$
In other words, on the lattice $\Lambda_n=\Zb^2/2^n$, we need at most $2^{n\bar d+2}$ squares of side length $2^{-n}$ to cover $\partial B$.
Since $f_m\in L^2$, by the multi-resolution analysis from \eqref{e:VW} and \eqref{e:fL2}, we have that
\begin{align}
\langle f_m,\ind_B\rangle
&= \left\langle \Vs_{n_0}f_m+\sum_{n=n_0}^\infty\Ws_nf_m,\ind_B\right\rangle\notag\\
&= \sum_{x\in\Lambda_{n_0}}\langle f_m,\phi_{n_0,x}\rangle_{L^2}\langle\phi_{n_0,x},\ind_B\rangle_{L^2} + \sum_{n=n_0}^\infty\sum_{x\in\Lambda_n, i=1,2,3}\langle f_m,\psi^{(i)}_{n,x}\rangle_{L^2}\langle\psi^{(i)}_{n,x},\ind_B\rangle_{L^2}.\label{e:decom}
\end{align}
Since $B$ is bounded and $\phi_{n_0,x}$ is supported on $B(x,2^{-n_0})$, there are only finitely many $x\in\Lambda_{n_0}$ such that $\langle\phi_{n_0,x},\ind_B\rangle_{L^2}\neq 0$. Since for each $x$, $\Vert f_m\Vert_{\Cc^\alpha}\rightarrow 0$ implies that $\langle f_m,\phi_{n_0,x}\rangle_{L^2}\to 0$, we have that the first term in \eqref{e:decom} converges to $0$ as $m\rightarrow \infty$.

Similarly, since each $\psi^{(i)}_{n,x}$ is supported on $B(x,2^{-n})$ and $\int \psi^{(i)}_{n,x}(y){\rm d}y=0$, only $x\in \Lambda_n=\Zb^2/2^n$ within distance $2^{-n}$ from $\partial B$ can have $\langle\psi^{(i)}_{n,x},\ind_B\rangle_{L^2}\neq 0$. The number of such $x$ is bounded by $2^{n\bar d+2}$, and for each such $x$, we can estimate more quantitatively
$$
|\langle\psi^{(i)}_{n,x},\ind_B\rangle_{L^2}| = \Big|\int_B 2^n \psi^{(i)}(2^n(y-x)) {\rm d}y \Big| \leq  2^{-n} \int_{2^n(B-x)} |\psi^{(i)}(z)| {\rm d}z
\leq C 2^{-n}
$$
uniformly in $x$ and $n$. On the other hand, by the definition of $\Vert f\Vert_{\Cc^\alpha}$ in \eqref{Calpha1},
\begin{align*}
\sup_{x\in\Lambda_n}|\langle f_m,\psi^{(i)}_{n,x}\rangle_{L^2}|
&= 2^{-(1+\alpha)n} \sup_{x\in\Lambda_n} \Big|2^{(2+\alpha)n}\int f_m(y) \psi^{(i)}(2^n(y-x)){\rm d}y \Big|\\
&\leq 2^{-(1+\alpha)n}\Vert f_m\Vert_{\Cc^\alpha}.
\end{align*}
Therefore, using the assumption that $\alpha>-1$ and $\bar d<2+\alpha$, the second term in \eqref{e:decom} is bounded above by
\begin{equation}
C\sum_{n=0}^\infty\sum_{i=1,2,3} 2^{n(\bar d-1)}\cdot 2^{-(1+\alpha)n} \Vert f_m\Vert_{\Cc^\alpha}
\leq C' \Vert f_m\Vert_{\Cc^\alpha},
\end{equation}
which converges to $0$ as $m\rightarrow \infty$. This concludes the proof that $\langle \cdot,\ind_B\rangle:\Cc^\alpha(\Omega)\rightarrow \Rb$ is well defined and continuous. The proof for the case $\Cc^\alpha_{loc}(\Omega)$ is similar, where we replace $f\in \Cc^\alpha_{loc}(\Omega)$ by $f \chi \in \Cc^\alpha$ with $\chi \in C_c^\infty(\Omega)$ and $\chi \ind_B=\chi$.
\end{proof}

\end{appendix}

\section*{Acknowledgements}
We wish to thank in particular F.~Caravenna for fruitful discussions during various stages of this project, which helped us to overcome some of the technical difficulties. We would also like to thank C.~Garban, Y.~Le Jan, and N.~Zygouras for helpful discussions, and an anonymous referee for suggesting a simplified proof of tightness. R.~Sun and A.~Bowditch are supported by NUS grants R-146-000-260-114 and R-146-000-300-114.


\bibliographystyle{imsart-number} 
\bibliography{ref.bib}

\end{document}